\theoremstyle{plain}
\newtheorem{theorem}{Theorem}
\newtheorem{lemma}[theorem]{Lemma}
\newtheorem{proposition}[theorem]{Proposition}
\newtheorem{corollary}[theorem]{Corollary}
\newtheorem{remark}[theorem]{Remark}
\newtheorem{definition}[theorem]{Definition}
\numberwithin{equation}{section}
\numberwithin{theorem}{section}
   \def\u{{\textbf u}}
      \def\h{{\textbf h}}
      \def\p{{\textbf p}}
            \def\q{{\textbf q}}
   \def\L{{\mathcal L}}
   \def\R{{\mathbb R}}
   \def\W{{\mathbb W}}
     \def\M{{\mathbb M}}
   \def\bH{{\mathbb H}}
        \def\e{{\textbf e}}
       \def\c{{\textbf c}}
           \def\v{{\textbf v}}    
                      \def\w{{\textbf w}}        
                      \def\h{{\textbf h}}       
  \def\kp{\kappa}                      
  \def\cH{ \mathcal{H}}
\def\XXint#1#2#3{{\setbox0=\hbox{$#1{#2#3}{\int}$ }
\vcenter{\hbox{$#2#3$ }}\kern-.6\wd0}}
\title[Regularity of the free boundary for a parabolic cooperative system]{Regularity of the free boundary \\ for a parabolic cooperative system}
\author{G. Aleksanyan}
\address{Department of Mathematics and Statistics, University of Helsinki, Finland. }
\author{ M. Fotouhi}
\address{Department of Mathematical Sciences, Sharif University of Technology, Tehran, Iran.}
\author{H. Shahgholian}
\address{Department of Mathematics, Royal Institute of Technology, 100~44  Stockholm, Sweden.}
\author{G. S. Weiss}
\address{Department of Mathematics, University of Duisburg-Essen, Essen, Germany.}
\begin{document}
\maketitle


\begin{abstract} 
In this paper we  study the following   parabolic system
  \begin{equation*}
  \Delta \u -\partial_t \u =|\u|^{q-1}\u\,\chi_{\{ |\u|>0 \}}, \qquad \u = (u^1, \cdots , u^m) \ ,
  \end{equation*}
  with free boundary $\partial \{|\u | >0\}$.   
 For $0\leq q<1$, we prove optimal growth rate for solutions $\u $ to the above system near free boundary points, and show   that  in  a uniform neighbourhood  of  any a priori well-behaved   free boundary  point   the free boundary is $C^{1, \alpha}$  in space directions and half-Lipschitz  
in the  time direction.
  \end{abstract}

\noindent{\small\bf Keywords:} Semilinear parabolic equation, Free boundary, Regularity.\\
{\small\bf Mathematics Subject Classification:} 35B65, 35R35.

\tableofcontents


\section{Introduction}


\subsection{Background}

In this paper we shall study for $0\leq q<1$ the parabolic (free boundary) system
\begin{equation}\label{system}
\begin{array}{l}
\Delta \u -\partial_t \u =f(\u):=|\u|^{q-1}\u\,\chi_{\left\lbrace |\u|>0\right\rbrace }, \qquad \u = (u^1, \cdots , u^m) \ ,
\end{array}
\end{equation}
where $\u: \ Q_1  \to \R^m$, 
where $Q_1 = B_1(0) \times (-1,1) $, with $B_1$ being the unit ball in $\R^n$,
$n\geq 2$, $m\geq 2$, and $|\cdot|$ is the Euclidean norm on the respective spaces. 
System \eqref{system} relates to  concentrations of species/reactants, where an increase in  each species/reactant
accelerates  the extinction/reaction of all species/reactants.
The special choice of our reaction kinetics would assure
a constant decay/reaction rate in the case that $u^i$, for $i=1, \cdots , m$ are
of comparable size.  

A diverse scalar parabolic free boundary problem has been subject of intense studies in more than half-century. On the other hand there are very few  results for problems that involve systems (see \cite{asuw15, CSY18, FSW20}), and probably no results for the system related to  equation \eqref{system}. 

The elliptic case of the above system is studied in  \cite{asuw15, FSW20} or in the scalar case, when $m=1$ in \cite{FS17}, where they prove optimal growth rate for the solutions as well as $C^{1,\alpha}$-regularity of the free boundary at  points that are a priori well-behaved.

In this paper we shall study the parabolic system \eqref{system} from a regularity point of view. The analysis of the above parabolic system introduces several serious obstruction, and hence a straightforward generalization of the ideas and techniques of its elliptic counterpart is far from being obvious. Due to its technical nature, and the need for notations and definitions,  we shall  explain these difficulties  below, during the course of developing the tools and ideas.


\subsection{Main results and plan of the paper}

Our results concern two main questions: Optimal growth  of the solution  
$\u$ at free boundary points (Theorem \ref{thm:growth}), and the regularity of the free boundary (Theorem \ref{thm:FBregularity})  at {\it well-behaved}  points.\footnote{Later  we shall  call them   regular  points.}

To prove our results we use  the  regularity theory for the elliptic case, see  \cite{asuw15, FSW20} and follow the ideas that have been used to treat parabolic free boundary problems,
 as in   \cite{CPS04} that  was used  for the  no-sign one phase scalar case. In doing so we encounter several technical problems, that we need to circumvent by enhancing the previous techniques.  
 The first problem we encounter  is the use of the balanced-energy monotonicity formula for proving quadratic growth estimates from the free boundary points. 
 In parabolic setting, and specially in system case, the combination of balanced energy and Almgren's frequency is more delicate than the elliptic case done in \cite{asuw15}. 

The second problem we encounter concerns  the regularity of the free boundary, where  we  are forced to use the  epiperimetric inequality in elliptic setting. In order to do this we need to prove that $\partial_t \u$,  the time derivative of $\u$, is H\"older regular for $q=0$.  When $q>0$ we need some modification (see Section 5). This, however,  can be  proved   at the so-called regular points. Indeed, since the set of regular points is open (in relative topology)   we can use indirect argument  to  show that $\partial_t \u$ tends to zero at free boundary points close to a regular point. From here one can bootstrap a H\"older regularity theory for $|\partial_t \u|$.  Once this is done we can invoke the epiperimetric inequality for equations with H\"older right hand side and deduce (in a standard way) the regularity of the free boundary in space. The H\"older regularity in time then follows by blow-up techniques, and indirect argument. 


\subsection{Notation}
For clarity of exposition we shall introduce some notation and definitions here  that are used frequently in the paper.\\

$\lfloor s\rfloor$ is the greatest integer below $s$, i.e. $s-1\leq\lfloor s\rfloor<s$.

Points in $\R^{n+1}$ are denoted by $(x,t)$, where $x \in \R^n$ and $t \in \R$.

Let $X=(x,t)$ and define $|X|:=(|x|^2+|t|)^{1/2}$.

$B_r(x)$ is the open ball in $\R^n$ with center $x$ and radius $r$, $B_r:=B_r(0)$.

$Q_r(x,t)$ denotes the open cylinder $B_r(x) \times (t-r^2,t+r^2)$ in $\R^{n+1}$.

$Q^+_r(x,t)=B_r(x) \times (t,t+r^2)$ (upper half cylinder).

$Q^-_r(x,t)=B_r(x) \times (t-r^2,t)$ (lower half cylinder).

$T_{r,a}:=B_a \times \left( -4r^2, -r^2 \right]$, $ T_r:=\mathbb{R}^n \times \left( -4r^2, -r^2 \right]$.
 
$\partial Q_r(x,t)$ is the topological  boundary.

$\partial_p Q_r(x,t)$ is the parabolic boundary, i.e., the topological boundary  minus the top of the cylinder.

$\nabla$ denotes the spatial gradient, $\nabla=(D_{x_1}, \cdots , D_{x_n})$.

$\nabla\u=[\partial_i u^j]_{1\leq i\leq n,1\leq j\leq m}$ is the derivative matrix of $\u$  with other notations
 \begin{align*}
|\nabla\u|^2&=\sum_{i=1}^m|\nabla u^i|^2,&\nabla\u:\nabla\v=\sum_{i=1}^m(\nabla u^i\cdot\nabla v^i),\\
\nabla\u\cdot\xi&=\xi^t\nabla\u=(\nabla u^1\cdot\xi,\cdots,\nabla u^m\cdot\xi),& \text{ for all }\xi\in\R^n.
\end{align*}

We will denote  the derivative of the function $f$ by $f_\u$.

We fix the following constants throughout the paper
\begin{equation}\label{def-alpha}
\kp:=\frac 2{1-q}, \qquad \alpha=(\kp(\kp-1))^{-\kp/2}.
\end{equation}

$\Gamma= \Gamma (\u) = \partial \{|\u|  >0\}.$

$\Gamma^\kp(\u)=\{(x_0,t_0)\in\Gamma(\u): \partial_t^i\partial_x^\mu\u(x_0,t_0)=0\text{ for all }2i+|\mu|<\kp\}$.

$\Omega_t$, $\Gamma_t$, $\partial \Omega_t$ are $t$-sections of
the corresponding sets in $\R^{n+1}$, at the level $t$.

$H= \Delta - \partial_t$ (the heat operator).

$\chi_\Omega$ is the characteristic function of $\Omega$.

We denote  by $G(x,s) $ the backward heat kernel 
\begin{equation*}
G(x,t)=\left\{\begin{array}{ll}(-4\pi t)^{-\frac{n}{2}}e^{\frac{|x|^2}{4t} }  , & t<0\\[10pt]
0,&t\geq0.\end{array}\right.
\end{equation*}

The following parabolic scalings at the point $X_0= (x_0,t_0) \in \Gamma$ are used,
\begin{equation*}
\u_{r,X_0}(x,t):= \frac{\u(rx+x_0,r^2 t+t_0)}{r^\kp},  \qquad 
 \u_r:=\u_{r,(0,0)}(x,t).
\end{equation*}
We say that $\u $ is $\kp$-backward self-similar if $\u_r=\u$ for all $r>0$, or equivalently $L u^i \equiv 0$, for $i=1,...,m$, where
\begin{equation*}
Lv:= \nabla v \cdot  x+2t\partial_t v -\kp v.
\end{equation*}

For  $\u $ a solution to the system \eqref{system} in $\R^n \times (-4,0 ]$, with a polynomial growth, we denote by $\mathbb{W}$ the parabolic balanced energy
\begin{equation} \label{weissrn}
\mathbb{W}(\u,r):=\frac{1}{r^{2\kp} }\int_{-4r^2}^{-r^2} \int_{\mathbb{R}^n}  \left(   | \nabla \u |^2 +\frac{ \kp|\u|^2}{2t } + \frac2{1+q} |\u |^{1+q} \right) G(x,t)dx dt,
\end{equation}
for $0<r<1$.
A change of variables implies that
\begin{equation*}
\mathbb{W}(\u, r)= \mathbb{W}(\u_r, 1).
\end{equation*}
For a fixed point $X_0=(x_0,t_0)\in \Gamma$, denote by 
\begin{equation*}
\W(\u, r; X_0):=\W(\u_{r,X_0},1).
\end{equation*}
For  notational simplicity we set 
\begin{equation*}
\M(\u):=\W(\u,1).
\end{equation*}

The class of half-space solutions $\mathbb H$ is defined as 
\begin{equation*}
\mathbb{H}:=\Big\{ x\mapsto \alpha\max (x \cdot \nu, 0)^\kp \e: \textrm{ where}, \nu \in \R^n, | \nu|=1, \e \in \R^m, |\e|=1 \Big\},
\end{equation*}
where $\alpha$ is defined in \eqref{def-alpha}.
A simple computation yields that $\W(\h,1)=:A_q$ is constant for every $\h\in\bH$.

We denote by $\mathcal{N}(r)$ the monotonicity function of Almgren
\begin{equation*}
\mathcal{N}(r)= \mathcal{N}(r,h):=\frac{ \int_{-4r^2}^{-r^2}\int_{\mathbb{R}^n} |\nabla h(x,t)|^2 G(x,t)dxdt}{\int_{-4r^2}^{-r^2}\int_{\mathbb{R}^n}\frac{1}{-t} |h(x,t)|^2 G(x,t)dxdt},
\end{equation*}
where $h$ is of polynomial growth in $x$-variables.


\section{Preliminary results and standard facts}\label{sec:preliminary}

\subsection{Monotonicity formulas}

In this section we shall present a few monotonicity formulas, that are the corner stone of our approach. The first of these is the standard balanced energy functional, that has strict monotonicity property for (global) solutions of our equation, unless the solution is backward self-similar of order $\kp$. See \cite{weiss1999self} for the similar result for the scaler case.

\begin{theorem}\label{monotonicity} (Monotonicity formula)
Let $\emph{ \u}$ be a solution of \eqref{system} in $ \R^n \times (-4,0)$, with a polynomial growth at infinity. Then $\mathbb{W}(\emph{\u},r)$ is monotone nondecreasing in $r$.
\end{theorem}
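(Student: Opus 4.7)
The plan is to reduce the monotonicity to an explicit derivative identity via rescaling. Since the exponent $\kp=2/(1-q)$ is tailored so that the nonlinearity is invariant under the rescaling $\u\mapsto\u_r$, the identity $\W(\u,r)=\W(\u_r,1)$ recorded just before the theorem reduces the claim to showing $\tfrac{d}{dr}\W(\u_r,1)\ge 0$. Setting $\v:=\partial_r\u_r=r^{-1}L\u_r$ and differentiating the three terms in the integrand gives
\begin{equation*}
\frac{d}{dr}\W(\u,r)=\int_{-4}^{-1}\!\!\int_{\R^n}\Bigl(2\nabla\u_r:\nabla\v+\tfrac{\kp}{t}\u_r\cdot\v+2|\u_r|^{q-1}\u_r\cdot\v\,\chi_{\{|\u_r|>0\}}\Bigr)G\,dx\,dt.
\end{equation*}

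Next I would integrate by parts in $x$ in the gradient term using $\nabla G=(x/2t)G$; polynomial growth of $\u$ and Gaussian decay of $G$ kill the boundary terms at spatial infinity, yielding
\begin{equation*}
2\int_{\R^n}\!\nabla\u_r:\nabla\v\,G\,dx=-2\int_{\R^n}\!\v\cdot\Delta\u_r\,G\,dx-\int_{\R^n}\!\tfrac{1}{t}\,\v\cdot(\nabla\u_r\cdot x)\,G\,dx.
\end{equation*}
Using the PDE to substitute $\Delta\u_r=\partial_t\u_r+|\u_r|^{q-1}\u_r\chi_{\{|\u_r|>0\}}$ and the rearrangement $\nabla\u_r\cdot x=r\v-2t\partial_t\u_r+\kp\u_r$, the contributions involving $\partial_t\u_r$, $|\u_r|^{q-1}\u_r$ and $\u_r\cdot\v/t$ cancel exactly in pairs, leaving only the nonnegative residue
\begin{equation*}
\frac{d}{dr}\W(\u,r)=-r\int_{-4}^{-1}\!\int_{\R^n}\frac{|\v|^2}{t}G\,dx\,dt=\frac{1}{r^{2\kp+1}}\int_{-4r^2}^{-r^2}\!\int_{\R^n}\frac{|L\u|^2}{-t}G\,dx\,dt\ge0,
\end{equation*}
where the second equality uses the change of variables $(y,s)=(rx,r^2t)$ together with $L\u_r(x,t)=r^{-\kp}L\u(rx,r^2t)$. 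This simultaneously proves monotonicity and identifies the equality case $L\u\equiv 0$, i.e. $\kp$-backward self-similarity of $\u$.

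The main technical obstacle is justifying differentiation under the integral and the spatial integration by parts for solutions that are a priori only in $W^{1,2}_{\mathrm{loc}}\cap L^\infty_{\mathrm{loc}}$ with polynomial growth. I would handle this by first proving the identity for smooth approximants, e.g.\ by mollifying $\u$ in $t$ on slightly shrunken parabolic slabs (interior parabolic estimates give H\"older control on $\nabla\u$ away from the free boundary, which combined with standard Gaussian tail estimates provides the dominating function), then passing to the limit by dominated convergence. A minor secondary point is the chain rule for $|\u_r|^{1+q}$ on $\{\u_r=0\}$; for $q\in[0,1)$ the factor $|\u|^{q-1}\u\chi_{\{|\u|>0\}}$ is a bounded measurable function and the computation closes without pathology.
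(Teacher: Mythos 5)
Your proposal is correct and follows essentially the same route as the paper: rescale via $\mathbb{W}(\u,r)=\mathbb{W}(\u_r,1)$, differentiate in $r$, integrate by parts in $x$ using $\nabla G=\frac{x}{2t}G$, substitute the equation, and recognize the surviving terms as $-\frac{1}{t}\,\frac{d\u_r}{dr}\cdot L\u_r=\frac{r}{-t}\bigl|\frac{d\u_r}{dr}\bigr|^2$, which is nonnegative. The only additions beyond the paper's argument are the (correct) change-of-variables reformulation of the derivative and the remarks on justifying differentiation under the integral, which the paper leaves implicit.
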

\begin{proof}
Using the identity
$$
\nabla v G=\nabla( v G)-\frac{x}{2t}vG,
$$
 we  compute the derivative of  $\W$ with respect to $r$
\begin{align*}
\frac{d\mathbb{W}(\u,r)}{dr}=&\frac{d\mathbb{W}(\u_r,1)}{dr}=
\int_{-4}^{-1} \int_{\mathbb{R}^n}  \frac{d}{dr}\left(   | \nabla \u_r |^2 + \frac{\kp|\u_r|^2}{2t } + \frac 2{1+q} |\u_r |^{1+q} \right) Gdx dt \\
=&2\int_{-4}^{-1} \int_{\mathbb{R}^n}  \left(   \nabla \u_r :\nabla \frac{d\u_r}{dr} + \frac{\kp\u_r}{2t } \frac{d\u_r}{dr}+\u_r|\u_r |^{q-1} \frac{d\u_r}{dr}\right) Gdx dt \\
=&2\int_{-4}^{-1} \int_{\mathbb{R}^n}\frac{d\u_r}{dr}  \left(   -\Delta \u_r +\u_r|\u_r |^{q-1}  
+ \frac{\kp\u_r}{2t }-\frac{x\cdot \nabla \u_r} {2t} \right) Gdx dt \\
=&\int_{-4}^{-1} \int_{\mathbb{R}^n}\frac{d\u_r}{dr}  \left(  - 2\frac{\partial \u_r} {\partial t} 
 + \kp\frac{\u_r}{t }-\frac{x\cdot \nabla \u_r} {t} \right) Gdx dt \\
 =&r\int_{-4}^{-1}\int_{\mathbb{R}^n}\left(\frac{d\u_r}{dr}  \right)^2 \frac{G(x,t)}{-t}dxdt 
 \geq 0.
\end{align*}
\end{proof}

The above monotonicity functional being limited to global solutions, needs to be enhanced 
in order for us to apply to a local setting. This is done by inserting a cutoff function into the functional, that in turn makes the functional almost monotone and calls for adding an extra term, as stated in the next theorem. See also \cite{CPS04} for the similar result in obstacle problem.

\begin{theorem}\label{cutoff}
Given a solution $\emph{\u}$ to \eqref{system} in  $ Q_1^-$, we consider the function $\emph{\v}:=\eta \emph{\u} $, where $ \eta \in C_0^\infty(B_{3/4})$ is nonnegative, 
$\eta \leq 1$, 
and $\eta =1$ in $ B_{1/2}$. Then there exists  a non-negative  function
 $F$ depending  on the given data, satisfying $ F( 0+) = 0$, and such that  $ \mathbb{W}(\emph{\v},r)+F(r)$ is monotone nondecreasing in $r$ for $ 0<r<1/2$.
\end{theorem}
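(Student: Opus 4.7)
The plan is to adapt the computation of Theorem \ref{monotonicity} to $\v:=\eta\u$ and absorb the commutator terms produced by the cutoff into an error correction $F$. Since $\eta$ depends only on $x$,
\begin{equation*}
H\v = \eta f(\u) + 2\nabla\eta\cdot\nabla\u + \u\,\Delta\eta,
\end{equation*}
so if $g:=H\v-f(\v)$, then $g\equiv 0$ on $B_{1/2}$ (there $\eta\equiv 1$, so $\v=\u$ and $\eta f(\u)=f(\v)$) and outside $B_{3/4}$. On the intermediate shell the first contribution is $(\eta-\eta^q)|\u|^{q-1}\u\,\chi_{\{|\u|>0\}}$, whose apparent singularity at the free boundary cancels since $|\u|^{q-1}\u=|\u|^q\,(\u/|\u|)$ has modulus $|\u|^q\leq \|\u\|_\infty^q$. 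Combined with interior parabolic regularity, this shows that $g$ is a bounded function supported in the closed annular slab $(\overline{B_{3/4}\setminus B_{1/2}})\times(-1,0]$.

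Next I repeat the derivative computation of Theorem \ref{monotonicity} verbatim with $\v_r$ in place of $\u_r$. The only change occurs when the equation is used to replace $-\Delta\v_r+f(\v_r)$: instead of $-\partial_t\v_r$, one obtains $-\partial_t\v_r - g_r$, where $g_r(x,t) := r^{2-\kp}\,g(rx,r^2 t)$ is the natural rescaling. The identity $L\v_r = r\,d\v_r/dr$ still holds, so
\begin{equation*}
\frac{d\mathbb{W}(\v,r)}{dr} = r\int_{-4}^{-1}\!\int_{\R^n}\left|\frac{d\v_r}{dr}\right|^2\frac{G}{-t}\,dx\,dt - 2\int_{-4}^{-1}\!\int_{\R^n}\frac{d\v_r}{dr}\cdot g_r\,G\,dx\,dt.
\end{equation*}
Splitting $G = \sqrt{G/(-t)}\cdot\sqrt{(-t)G}$ and applying Cauchy--Schwarz followed by the weighted Young inequality $2|AB|\leq rA^2 + B^2/r$ absorbs the main positive term into the error and leaves
\begin{equation*}
\frac{d\mathbb{W}(\v,r)}{dr} \;\geq\; -\,\frac{1}{r}\int_{-4}^{-1}\!\int_{\R^n}(-t)|g_r|^2\,G\,dx\,dt.
\end{equation*}

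It remains to show this bound is integrable in $r$ near the origin. Undoing the scaling via $y=rx$, $s=r^2 t$ and using $G(x,t) = r^n G(rx,r^2 t)$ gives
\begin{equation*}
\frac{1}{r}\int_{-4}^{-1}\!\int_{\R^n}(-t)|g_r|^2 G\,dx\,dt = \frac{1}{r^{2\kp+1}}\int_{-4r^2}^{-r^2}\!\int_{\R^n}(-s)|g(y,s)|^2 G(y,s)\,dy\,ds.
\end{equation*}
Since $g$ is bounded and supported in $\{|y|\geq 1/2\}$ while $|s|\leq 4r^2$, the pointwise bound $G(y,s)\leq C|s|^{-n/2}\exp(-1/(16|s|))$ valid on the support of $g$ forces the right-hand side to decay like $r^{-N}\exp(-c/r^2)$ for some constants $c,N>0$, which is integrable near $r=0$. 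Setting
\begin{equation*}
F(r) := \int_0^r \frac{1}{\rho}\int_{-4}^{-1}\!\int_{\R^n}(-t)|g_\rho|^2 G\,dx\,dt\,d\rho
\end{equation*}
produces a non-negative continuous function with $F(0+)=0$, and $\mathbb{W}(\v,r)+F(r)$ is monotone nondecreasing on $(0,1/2)$ by construction. The main technical subtlety is the pointwise boundedness of $g$ near the free boundary, where $|\u|^{q-1}$ is singular; the cancellation $|\u|^{q-1}\u = |\u|^q(\u/|\u|)$ rescues the argument, after which the rest is a standard Gaussian tail estimate.
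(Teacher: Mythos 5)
Your proof is correct and follows essentially the same route as the paper: differentiate $\mathbb{W}(\v_r,1)$ as in Theorem \ref{monotonicity}, use the equation to reduce everything to the error coming from the annulus $B_{3/4}\setminus B_{1/2}$ where $H\v\neq f(\v)$, and bound that error by an exponentially small quantity in $r$ whose integral defines $F$. The only (harmless) difference is in how the error is estimated: the paper discards the nonnegative term $\int (L\v)^2\,\frac{G}{-2t}$ and bounds the remaining term linearly in the perturbation, while you keep the good quadratic term and absorb it via weighted Young, ending with an error quadratic in $g$; both versions rest on the same two facts you identify, namely boundedness of the commutator terms on the shell (interior parabolic regularity, including the cancellation in $|\u|^{q-1}\u$) and the Gaussian decay there for times of order $r^2$.
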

\begin{proof}
As in the previous theorem and applying the relation $\frac{d\v_r}{dr}=\frac1rL\v_r$, we get
\begin{equation*}
\begin{aligned}
\frac{d\mathbb{W}(\v,r)}{dr}=\frac{d\mathbb{W}(\v_r,1)}{dr}=&
\frac{2}{r^{2\kp+1}}\int_{-4r^2}^{-r^2} \int_{\mathbb{R}^n}L\v \left(   -\Delta \v +\v|\v |^{q-1} 
+ \frac{\kp\v}{2t }-\frac{x\cdot \nabla \v} {2t} \right) Gdx dt \\
=&\frac{2}{r^{2\kp+1}}\int_{-4r^2}^{-r^2} \int_{\mathbb{R}^n}L\v \left(   -\Delta \v +\v|\v |^{q-1}  +\partial_t \v
+ \frac{L\v}{-2t } \right) Gdx dt \\
\geq& \frac{2}{r^{2\kp+1}}\int_{-4r^2}^{-r^2} \int_{\mathbb{R}^n}L\v \left(   -\Delta \v +\v|\v |^{q-1}  +\partial_t\v \right) Gdx dt ,
\end{aligned}
\end{equation*}
Observe that  $ H\v= H\u =\u|\u |^{q-1}=\v|\v |^{q-1}$ in $B_{1/2}$, and $H\v(x,t)=0$  if $ |x|>3/4$, hence
\begin{equation*}
\begin{aligned}
\frac{d\mathbb{W}(\v,r)}{dr}
\geq& \frac{2}{r^{2\kp+1}}\int_{-4r^2}^{-r^2} \int_{\mathbb{R}^n}L\v \left(   -H\v +\v|\v |^{q-1} \right) Gdx dt \\
=&
\frac{2}{r^{2\kp+1}}\int_{-4r^2}^{-r^2} \int_{B_{3/4}\setminus B_{1/2}}L\v \left(  -H \v +\v|\v |^{q-1} \right) Gdx dt
\geq -\frac{Ce^{\frac{-1}{64r^2}}}{r^{n+2\kp-1}},
\end{aligned}
\end{equation*}
where we have used the following relations
\begin{equation*}
L \v= (x\cdot \nabla \eta) \u+\eta L\u, \textrm{ and } H\v= \Delta \eta  \u+\eta H\u+2 \nabla \u \cdot\nabla \eta.
\end{equation*}
Now the statement of the lemma follows with 
\begin{equation*}
F(r)=C\int_0^r \tau^{-n-2\kp+1}e^{\frac{-1}{64 \tau^2}}d\tau.
\qedhere
\end{equation*}
\end{proof}

We state the following standard result  concerning  regularity theory, leaving out the standard proof. See for example \cite{W2000} for similar result. 
\begin{corollary}
Let $\emph{\u} $ be a solution to our problem and suppose it has polynomial growth from a free boundary point $X_0=(x_0,t_0)$, where both $\emph{\u}$ and $\nabla\emph{\u}$ vanish. Then the following hold.
\begin{enumerate}
  \item The function $ \mathbb{W}(\emph{\u},r;X_0)$ has a right limit as $ r\rightarrow 0+$.
  \item Any blow up of $ \emph{\u}$ at $ (x_0,t_0)$ is a $\kp$-backward self-similar function.
   \item The function $ X_0\mapsto \mathbb{W}(\emph{\u}, 0+;X_0)$ is upper semicontinous.
\end{enumerate}
\end{corollary}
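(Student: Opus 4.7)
The plan is to deduce the three claims from the monotonicity formulas of Theorems~\ref{monotonicity} and \ref{cutoff}, together with standard parabolic compactness arguments.

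For (1), I would apply Theorem~\ref{cutoff} after a translation centered at $X_0$: pick a cutoff $\eta\in C_c^\infty(B_{3/4}(x_0))$ with $\eta\equiv 1$ on $B_{1/2}(x_0)$, and set $\v:=\eta\,\u$. Theorem~\ref{cutoff} then supplies a non-negative function $F$ with $F(0+)=0$ such that $r\mapsto \W(\v,r;X_0)+F(r)$ is nondecreasing on a right neighbourhood of $0$. In the Gaussian integral defining $\W(\u,r;X_0)$, the contribution of the region where $\eta\neq 1$ corresponds, after rescaling, to $|y|\gtrsim 1/r$, and the exponential decay of $G$ makes it of order $e^{-c/r^2}$, the same order as $F$; absorbing this discrepancy into a slightly enlarged $\widetilde F$ gives monotonicity of $\W(\u,r;X_0)+\widetilde F(r)$. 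Polynomial growth then provides a uniform lower bound, and monotone convergence yields the right limit.

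For (2), let $r_k\to 0+$ and consider the rescalings $\u_k:=\u_{r_k,X_0}$. These solve \eqref{system} on expanding parabolic cylinders, and polynomial growth together with the vanishing of $\u$ and $\nabla\u$ at $X_0$ provides locally uniform $L^\infty$ bounds. Standard interior parabolic regularity for the bounded right-hand side $f(\u_k)$ furnishes a subsequence converging locally uniformly to a global solution $\u_0$ on $\R^n\times(-\infty,0)$ of polynomial growth. Using the scaling identity $\W(\u,\rho r_k;X_0)=\W(\u_k,\rho)$ and passing to the limit in $k$,
\begin{equation*}
\W(\u_0,\rho)=\lim_{k\to\infty}\W(\u_k,\rho)=\lim_{k\to\infty}\W(\u,\rho r_k;X_0)=\W(\u,0+;X_0)
\end{equation*}
for every $\rho>0$. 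Thus $r\mapsto \W(\u_0,r)$ is constant on $(0,\infty)$, and the derivative computation in the proof of Theorem~\ref{monotonicity} forces $\tfrac{d}{dr}(\u_0)_r\equiv 0$, i.e.\ $Lu_0^i\equiv 0$ for each $i$. This is precisely $\kp$-backward self-similarity of $\u_0$.

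For (3), the almost-monotonicity from part (1) gives
\begin{equation*}
\W(\u,0+;X_0)=\inf_{r\in(0,1/2)}\bigl(\W(\u,r;X_0)+\widetilde F(r)\bigr).
\end{equation*}
For each fixed $r>0$, the map $X_0\mapsto \W(\u,r;X_0)=\W(\u_{r,X_0},1)$ is continuous in $X_0$, since the rescaled solution and its spatial gradient depend continuously on the base point, and the integrand is continuous in $\u$ (the term $|\u|^{1+q}$ extends continuously across $\u=0$ because $1+q>0$). An infimum of a family of continuous functions is upper semicontinuous, which gives (3). The main technical obstacle is carrying out the truncation argument in (1) cleanly: one must verify that the gap between $\W(\v,r;X_0)$ of Theorem~\ref{cutoff} and the formal global functional $\W(\u,r;X_0)$ is of the same $e^{-c/r^2}$ order as $F$, so that it can be absorbed without disturbing monotonicity. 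Assertions (2) and (3) are then formal consequences.
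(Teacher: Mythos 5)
The paper itself omits the proof of this corollary (it is labelled a standard fact, with a pointer to \cite{W2000}), and your architecture is exactly the standard route it has in mind: almost-monotonicity of the truncated energy from Theorem \ref{cutoff} (with the cutoff discrepancy absorbed into an enlarged $\widetilde F$, correctly estimated at order $e^{-c/r^2}$) for (1); constancy of $r\mapsto\W(\u_0,r)$ for a blow-up limit combined with the derivative identity in the proof of Theorem \ref{monotonicity}, forcing $Lu_0^i\equiv0$, for (2); and the representation of $\W(\u,0+;\cdot)$ as an infimum over $r$ of functions continuous in the base point, hence upper semicontinuous, for (3).

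There is, however, one genuine gap, in the compactness step of (2) (and it also underlies the ``uniform lower bound'' you claim in (1)). You assert that locally uniform $L^\infty$ bounds on $\u_{r_k,X_0}=\u(x_0+r_kx,t_0+r_k^2t)/r_k^{\kp}$ follow from ``polynomial growth together with the vanishing of $\u$ and $\nabla\u$ at $X_0$.'' They do not: interior parabolic regularity gives $\u\in C_x^{1,\beta}\cap C_t^{0,(1+\beta)/2}$ for $q=0$ (resp.\ $C_x^{2,\beta}\cap C_t^{1,\beta/2}$ for $q>0$), so the vanishing of $\u$ and $\nabla\u$ only yields $|\u(X)|\le C|X-X_0|^{1+\beta}$ (resp.\ $\le C|X-X_0|^{2}$), and since $\kp=2/(1-q)\ge 2$ these bounds, divided by $r_k^{\kp}$, blow up as $r_k\to0$. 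The same issue affects the finiteness of $\W(\u,0+;X_0)$ (the negative term $\kp|\v|^2/(2t)$ is only controlled if $|\u|\lesssim |X-X_0|^{\kp}$; otherwise the right limit exists only in $[-\infty,\infty)$) and the passage to the limit $\W(\u_{r_k,X_0},\rho)\to\W(\u_0,\rho)$, since these are Gaussian integrals over expanding domains and locally uniform convergence alone does not control the tails. The ingredient actually needed everywhere is the optimal growth $\sup_{Q_r^-(X_0)}|\u|\le Cr^{\kp}$; in this paper that estimate is Theorem \ref{thm:growth}, proved later and only for $X_0\in\Gamma^\kp$, which for $q>0$ is a strictly stronger hypothesis than the vanishing of $\u$ and $\nabla\u$. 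So within the logical order of the paper the $\kp$-order growth must be read as part of the corollary's hypothesis (``polynomial growth from a free boundary point'') and cited as such, or else (2) should be stated only for blow-up sequences that are assumed to converge with a uniform $\kp$-growth bound; deriving it from the vanishing of $\u$ and $\nabla\u$, as you do, is not valid.
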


\bigskip

Next we state, and for reader's convenience, prove Almgren's monotonicity formula. There are different versions of this formula in literature, see for example \cite{danielli2017optimal}.

\begin{lemma} (Almgren's frequency formula)  \label{Nr}
Let $h$ be a non-zero caloric function in  $\R^n\times(-4,0)$, with polynomial growth, and 
recall the definition of Almgren's monotonicity function $ \mathcal{N} (r,h)$. 
Then 
\begin{itemize}
\item[i)]
$ \mathcal{N}^{\prime}(r,h) \geq 0$, for $0< r <1$. 
\item[ii)]
If $ \mathcal{N}(r,h) \equiv const :=\mathcal{N} $, then $ h$ is a backward self-similar caloric function of degree $ 2\mathcal{N}$. 
\item[iii)]
For an integer number $\ell\geq2$, if $\partial_t^j\partial_x^\mu h (0) = 0$ 
 for all $2j+|\mu|\leq\ell-1$, we obtain $2\mathcal{N}(0+,h) \geq  \ell$ and equality implies that $h$ is backward self-similar of degree $\ell$.
\end{itemize}
\end{lemma}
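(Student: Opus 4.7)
The plan is to write $\mathcal{N}(r,h) = A(r)/B(r)$ with
\begin{equation*}
A(r) := \int_{T_r} |\nabla h|^2 G, \qquad B(r) := \int_{T_r} \frac{|h|^2}{-t} G,
\end{equation*}
and exploit the scale invariance $\mathcal{N}(r,h) = \mathcal{N}(1, h_r)$ via the parabolic scaling $h_r(x,t) := h(rx, r^2 t)$, which is again caloric. Differentiating with the identity $\partial_r h_r = r^{-1} L h_r$, where $L v = x \cdot \nabla v + 2t \partial_t v$, gives
\begin{equation*}
\frac{r}{2}\, A'(r) = \int_{T_1} \nabla h_r : \nabla (L h_r)\, G, \qquad \frac{r}{2}\, B'(r) = \int_{T_1} \frac{h_r\, L h_r}{-t}\, G.
\end{equation*}

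The heart of the argument is a pair of integration-by-parts identities. Integrating $A(r)$ by parts in $x$ (the Gaussian kills all spatial boundary contributions at infinity since $h$ grows polynomially), then using $\Delta h_r = \partial_t h_r$ together with $\nabla G = \frac{x}{2t} G$, yields
\begin{equation*}
A(r) \;=\; \int_{T_1} \frac{h_r\, L h_r}{-2t}\, G \;=\; \frac{r}{4}\, B'(r).
\end{equation*}
The same manipulation applied to $\int \nabla h_r : \nabla(L h_r) G$ produces
\begin{equation*}
\frac{r}{2}\, A'(r) \;=\; \int_{T_1} \frac{|L h_r|^2}{-2t}\, G.
\end{equation*}
Part (i) then follows from Cauchy--Schwarz applied in the weighted space $L^2\!\left(T_1, \tfrac{G}{-t}\right)$:
\begin{equation*}
\left( \tfrac{r}{2}\, B'(r) \right)^2 \;\leq\; B(r)\cdot r\, A'(r),
\end{equation*}
which, combined with $A(r) = \tfrac{r}{4} B'(r)$, gives $A(r) B'(r) \leq A'(r) B(r)$, i.e.\ $\mathcal{N}'(r) \geq 0$.

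For (ii), if $\mathcal{N}(r) \equiv c$ then equality must hold in the Cauchy--Schwarz step for every $r$, so $L h_r$ is pointwise a constant multiple $\mu$ of $h_r$. Plugging this into $\tfrac{r}{2} B'(r) = \mu B(r)$ and comparing with $A(r) = cB(r) = \tfrac{r}{4} B'(r)$ forces $\mu = 2c$, so $L h = 2c\, h$ pointwise; hence $h$ is backward self-similar of degree $2\mathcal{N}$. For (iii), I expand $h$ in the orthogonal basis of homogeneous caloric polynomials (with respect to the parabolic degree $k$ defined by $L p_k = k\, p_k$), writing $h = \sum_{k \geq k_0} p_k$ with $p_{k_0} \not\equiv 0$; the vanishing hypothesis forces $k_0 \geq \ell$. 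Using orthogonality of the $p_k$ under both $\int \cdot\, G$ and $\int \cdot \tfrac{G}{-t}$ on $T_1$, and the per-component identity $\int |\nabla p_k|^2 G = \tfrac{k}{2} \int \tfrac{p_k^2}{-t}\, G$ (a special case of (ii)), one gets
\begin{equation*}
\mathcal{N}(r) \;=\; \frac{\sum_k (k/2)\, r^{2k} a_k}{\sum_k r^{2k} a_k}, \qquad a_k := \int \tfrac{p_k^2}{-t}\, G \geq 0,
\end{equation*}
which is a convex combination of the values $k/2 \geq k_0/2$; letting $r \to 0+$ the lowest non-vanishing term dominates, giving $\mathcal{N}(0+) = k_0/2 \geq \ell/2$. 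Equality then forces $k_0 = \ell$ together with constancy of $\mathcal{N}$, at which point (ii) closes the loop.

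The main obstacle will be getting the two integration-by-parts identities in their sharp form, where the boundary-of-shell contributions (at $t = -1$ and $t = -4$) and the spatial behaviour at infinity must all cancel or vanish cleanly. This works out here because only spatial integration by parts is invoked, and the Gaussian weight combined with polynomial growth of $h$ suppresses all contributions at $|x| = \infty$, while the time boundary never enters the argument as $\partial_t$ is handled algebraically through the caloric equation rather than integrated.
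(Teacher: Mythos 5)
Parts (i) and (ii) of your argument are correct and are essentially the paper's own proof: your two integration-by-parts identities are, up to the harmless difference that the paper scales $h_r=h(rx,r^2t)/r^{\kappa}$ and so its $L$ carries an extra $-\kappa h$ term, exactly the paper's formulas for $I_1$ and $I_2$; monotonicity then follows from the same Cauchy--Schwarz inequality in $L^2(T_1,\tfrac{G}{-t})$, and the equality case gives $Lh=\mathrm{const}\cdot h$, i.e.\ backward self-similarity of degree $2\mathcal N$.

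Part (iii) is where you genuinely diverge. The paper argues by compactness: it normalizes $w_r=h_r/\bigl(\int_{T_1}\tfrac{|h_r|^2}{-t}G\bigr)^{1/2}$, uses the Gaussian pointwise bound for caloric functions (Lemma \ref{caloric-estimate-lemma}) to get uniform bounds and convergence of the weighted norms, and extracts a blow-up limit $w_0$ that is caloric, self-similar of degree $2\mathcal N(0+)<\ell$, and has vanishing parabolic derivatives up to order $\ell-1$ at the origin, a contradiction. Your spectral route (expansion in parabolically homogeneous caloric polynomials, orthogonal in the Gaussian-weighted spaces, so that $\mathcal N(r)$ becomes a weighted average of half-degrees) can be made rigorous and is in some respects cleaner for the inequality, but as written it has two gaps. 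First, the expansion is not free: you must prove that a caloric function of polynomial growth on the strip admits such an expansion with Parseval in $L^2(T_1,\tfrac{G}{-t})$ (this follows from completeness of the Hermite-type polynomials in Gaussian $L^2$ on each time slice together with the fact that for caloric $h$ the slice coefficients $(-t)^{-k}\int h\,p_k\,G\,dx$ are independent of $t$; neither point is stated), and you must convert the pointwise hypothesis $\partial_t^j\partial_x^\mu h(0)=0$ for $2j+|\mu|\le\ell-1$ into $a_k=0$ for $k<\ell$; the honest way is via the decay $\sup_{T_r}|h|\le Cr^{\ell}$, which needs smoothness of $h$ up to $t=0$ (available because a polynomially bounded caloric function on $\R^n\times(-4,0)$ extends caloric forward across $t=0$), not term-by-term differentiation of the series at a boundary point. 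Second, and more seriously, your equality case is wrong as stated: equality $2\mathcal N(0+)=\ell$ gives only $k_0=\ell$ and does not force constancy of $\mathcal N$; for instance $h=p_\ell+p_{\ell+2}$, with $p_k$ caloric and parabolically homogeneous of degree $k$, satisfies the vanishing hypothesis and has $2\mathcal N(0+)=\ell$, yet is not self-similar. The statement that is actually provable, and that the paper proves and later uses, is that $2\mathcal N(r)\ge\ell$ for all $r\in(0,1]$ and that equality at a fixed scale, say $2\mathcal N(1)=\ell$ (equivalently $\mathcal N\equiv\ell/2$), forces self-similarity of degree $\ell$. In your framework this is immediate, since $\mathcal N(r_0)=\ell/2$ gives $\sum_{k>\ell}(k-\ell)\,r_0^{2k}a_k=0$ and hence $a_k=0$ for $k>\ell$; you should formulate and prove the equality case in that form rather than deducing constancy of $\mathcal N$ from its limit value at $0+$.
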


\begin{proof}
We have
\begin{equation*}
\mathcal{N}(r):=\frac{\int_{-4}^{-1} \int_{\mathbb{R}^n} |\nabla h_r|^2 Gdxdt}{\int_{-4}^{-1} \int_{\mathbb{R}^n} \frac1{-t}|h_r|^2 Gdxdt},
\end{equation*}
and
\begin{equation*}
\begin{aligned}
 \mathcal{N}^{\prime}(r)= \frac{ 2 I_1 \int_{-4}^{-1} \int_{\mathbb{R}^n}\frac1{-t} |h_r|^2 Gdxdt- 2 I_2\int_{-4}^{-1} \int_{\mathbb{R}^n}\frac1{-t} h_r\frac{dh_r}{dr}Gdxdt}{\left( \int_{-4}^{-1} \int_{\mathbb{R}^n}\frac1{-t} |h_r|^2 Gdxdt \right)^2},
\end{aligned}
\end{equation*}
where
\begin{equation*}
I_1:= \int_{-4}^{-1}\int_{\mathbb{R}^n} \nabla h_r \cdot \nabla \frac{d h_r}{dr} Gdxdt ~~ \textrm{ and } ~
I_2:= \int_{-4}^{-1} \int_{\mathbb{R}^n} | \nabla h_r|^2 Gdxdt.
 \end{equation*}

\par
Let us recall that $ \frac{d h_r}{dr}= \frac{1}{r}L h_r$. Using
integration by parts, and taking into account that $h$ is caloric, we obtain
\begin{equation*}
I_1:=  \int_{-4}^{-1} \int_{\mathbb{R}^n} \left(-\Delta h_r  - \frac{x \cdot \nabla h_r}{2t}\right)\frac{dh_r}{dr}  Gdxdt
 =   \int_{-4}^{-1}\int_{\mathbb{R}^n} \frac{-1}{2rt}\left((Lh_r)^2 +\kp h_r Lh_r\right)Gdxdt.
\end{equation*}
By similar computations,
\begin{equation}\label{eq:I_2}
I_2:= 
\int_{-4}^{-1} \int_{\mathbb{R}^n} \left(-\Delta h_r  - \frac{x \cdot \nabla h_r}{2t}\right)h_r  Gdxdt
 =  \int_{-4}^{-1}\int_{\mathbb{R}^n}\frac{-1}{2t} (Lh_r +\kp h_r)h_r Gdxdt.
  \end{equation}
Now consider
\begin{equation*}
\begin{aligned}
r \mathcal{N}^{\prime}(r) \left( \int_{-4}^{-1} \int_{\mathbb{R}^n} \frac{-1}{t}|h_r|^2 Gdxdt \right)^2= &
\left( \int_{-4}^{-1}\int_{\mathbb{R}^n} \frac{-1}{t}(Lh_r)^2Gdxdt\right)\left(\int_{-4}^{-1} \int_{\mathbb{R}^n}\frac{-1}{t} |h_r|^2 Gdxdt\right) \\
&\qquad-\left(\int_{-4}^{-1} \int_{\mathbb{R}^n}\frac{-1}{t} h_rLh_r Gdxdt\right)^2\geq0.
  \end{aligned}
   \end{equation*}
Hence  $\mathcal{N}$ is nondecreasing, and if $  \mathcal{N}^{\prime} =0$, then $ Lh_r=ch_r $. Recalling that $ Lh_r= x \cdot \nabla h_r +2t\partial_t h_r-\kp h_r $, we obtain 
$  x \cdot \nabla h -2t\partial_t h-(\kp+c)h =0$, which  is equivalent to $ h $
 being backward self-similar of degree $ c+\kp$.
 On the other hand, we have from \eqref{eq:I_2}
 \begin{equation*} 
\mathcal{N}(r)=\frac{\int_{-4}^{-1} \int_{\mathbb{R}^n} |\nabla h_r|^2 Gdxdt}{\int_{-4}^{-1} \int_{\mathbb{R}^n} \frac1{-t}|h_r|^2 Gdxdt} =\frac{c+\kp}{2} ,
\end{equation*} 
hence $ c+\kp=2 \mathcal{N}$. 

The last statement of the lemma follows now by  the contradiction argument. Suppose that $2 \mathcal{N}(s)<\ell$ for some $s\in(0,1]$, it follows that $2 \mathcal{N}(0+)<\ell$. By scaling 
$$w_r:=\frac{h_r}{\left(\int_{-4}^{-1} \int_{\mathbb{R}^n} \frac1{-t}|h_r|^2 Gdxdt\right)^{1/2}},$$
we infer from the boundedness of $ \mathcal{N}(r)$ that $\{w_r\}$ is bounded\footnote{ 
Note that $G\geq \frac{e^{-R^2/4}}{(16\pi)^{n/2}}$ for  $|x|\leq R$, and $h$ is of polynomial growth.}
 in $L^2(-4,-1;W^{1,2}(B_R))$ for every $R>0$.
Now we apply Lemma \ref{caloric-estimate-lemma} for $w_r$ and $\nabla w_r$ to get that $\{w_r\}$ is bounded in $L^2(-4,0;W^{1,2}(B_R))$. Indeed, for $-4<s<-2$ and $-1<t<0$ we can write
\begin{align*}
\int_{-1}^0\int_{B_R}|w_r(x,t)|^2dxdt\leq &\int_{-1}^0\int_{B_R}\int_{-4}^{-2}\int_{\R^n}\frac12e^{-\frac{|x|^2}{t+s}}\left(\frac{\sqrt3s}{s-t}\right)^{n}|w_r(y,s)|^2G(y,s)dydsdtdx\\
\leq& e^{R^2/2}2^{2n+1}3^{n/2}|B_R|\int_{-4}^{-2}\int_{\R^n}\frac1{-s}|w_r(y,s)|^2G(y,s)dyds\\
\leq & e^{R^2/2}2^{2n+1}3^{n/2}|B_R|.
\end{align*}

Furthermore, the estimates on derivatives for caloric functions imply that $\{w_r\}$ is bounded in $L^2(-3,0;W^{2,2}(B_R))$. 
Consequently, by diagonalization technique there is a weakly convergence sequence $w_{r_m}\rightharpoonup w_0$  in $W^{1,2}(-3,0;W^{2,2}_{{\rm loc}}(\R^n))$ as well as $w_{r_m}\rightarrow w_0$ strongly in $L^2(-3,0;W^{1,2}_{{\rm loc}}(\R^n))$. 
Therefore, the limit $w_0$ is a caloric function satisfying $w_0(0)=\partial_t^j\partial_x^\mu w_0 (0) = 0$ for all $2j+|\mu|\leq\ell-1$. The later inequality  is a consequence of $w_{r_m}$ being smooth and their derivatives being  uniformly bounded by  $\lVert w_{r_m}\rVert_{L^1\left((-4,0)\times B_R\right)}$.
We claim  now that for every fixed $0<r\leq1/3$,
\begin{equation}\label{almgren:normw0}
\int_{-4r^2}^{-r^2} \int_{\mathbb{R}^n} \frac1{-t}|w_0|^2 Gdxdt=\lim_{r_m\rightarrow0}\int_{-4r^2}^{-r^2} \int_{\mathbb{R}^n} \frac1{-t}|w_{r_m}|^2 Gdxdt=1,
\end{equation}
and
\begin{equation}\label{almgren:claim}
\int_{-4r^2}^{-r^2} \int_{\mathbb{R}^n}|\nabla w_0|^2 Gdxdt=\lim_{r_m\rightarrow0}\int_{-4r^2}^{-r^2} \int_{\mathbb{R}^n} |\nabla w_{r_m}|^2 Gdxdt.
\end{equation}
Suppose this is true, then we for $r<1/3$ we have 
$$\mathcal{N}(r,w_0)= \lim_{r_m\rightarrow0}\mathcal{N}(r,w_{r_m})=\lim_{r_m\rightarrow0}\mathcal{N}(rr_m,h)=\mathcal{N}(0+,h).$$
So, $w_0$ must be a backward self-similar  function of degree $2\mathcal{N}(0+,h)<\ell$ for $0<t<1$. Since  $w_0$ is caloric function, so $2\mathcal{N}(0+,h)\in\mathbb{N}$, comparing with  $w_0(0)=\partial_t^j\partial_x^\mu w_0 (0) = 0$ for all $2j+|\mu|\leq\ell-1$, this yields a contradiction with \eqref{almgren:normw0}.

Therefore, $2\mathcal{N}(s,h)\geq\ell$ for $s\in(0,1]$. If   $2\mathcal{N}(1,h)=\ell$, then $\mathcal{N}$ is constant on $(0,1)$ and thereby $h$ is a backward self-similar function of degree $\ell$.

To close the argument, we need to prove \eqref{almgren:normw0} and \eqref{almgren:claim}. This is a matter of computation and can be settled easily by Lemma \ref{caloric-estimate-lemma}.
Indeed, we just need to show the following uniform convergence when $0<r\leq1/3$ is fixed and $r_m\rightarrow 0$,
\begin{align*}
\int_{-4r^2}^{-r^2} \int_{\R^n\setminus B_R} \frac1{-t}|w_{r_m}|^2 Gdxdt\leq &\left(\int_{-4r^2}^{-r^2} \int_{\R^n\setminus B_R}(2\sqrt3)^{n}e^{\frac{|x|^2}{3-t}}\frac1{-t}G(x,t)dxdt\right)\\
&\times\left(\int_{-4}^{-3}\int_{\R^n}\frac1{-s}|w_{r_m}(y,s)|^2G(y,s)dyds\right)\\
\leq&\int_{-4r^2}^{-r^2} \int_{\R^n\setminus B_R}\frac\pi3\left(\frac{3}{-\pi t}\right)^{n/2+1}\exp\left(\frac{|x|^2}{3-t}+\frac{|x|^2}{4t}\right)dxdt\\
\leq&\int_{-4r^2}^{-r^2} \int_{\R^n\setminus B_R}\frac\pi3\left(\frac{3}{-\pi t}\right)^{n/2+1}e^{\frac{|x|^2}{8t}}dxdt\rightarrow 0 \text{\ \  as }R\rightarrow \infty.
\end{align*}
The proof of \eqref{almgren:claim} is the same if we apply again Lemma  \ref{caloric-estimate-lemma} for the caloric function $\nabla w_{r_m}$.
\end{proof}


\subsection{Nondegeneracy}

\begin{proposition} {(Nondegeneracy)}\label{Nondegeneracy}
Let $\emph{\u}$ be a solution of \eqref{system} with $0\leq q<1$. Then there is a positive constant $c=c(q,n)$ such that 
if $ (x_0,t_0) \in \overline{\{ |\emph{\u}| > 0\}}$, and $ Q^-_r(x_0,t_0) \subset Q_1$,  then 
\begin{equation*}
\sup_{Q^-_r (x_0,t_0)}|\emph{\u}| \geq c r^\kp.
\end{equation*}
\end{proposition}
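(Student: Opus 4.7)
The plan is to adapt the classical barrier comparison argument for obstacle-type nondegeneracy to the parabolic system setting. First I would reduce, by continuity of $|\u|$, to the case $|\u(x_0,t_0)|>0$, since the inequality passes to the closure $\overline{\{|\u|>0\}}$ by approximation: $\sup_{Q^-_r(x_0,t_0)}|\u|$ depends continuously on $(x_0,t_0)$. The next step is to establish the pointwise differential inequality
\begin{equation*}
H|\u|\geq|\u|^q\qquad\text{in }\{|\u|>0\},
\end{equation*}
which follows from the identity $|\u|\,H|\u|=\u\cdot H\u+|\nabla\u|^2-|\nabla|\u||^2$ (valid in the smooth positive phase) combined with the Kato-type bound $|\nabla|\u||\leq|\nabla\u|$ and the equation $H\u=|\u|^{q-1}\u$.

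The heart of the argument is the construction of a time-dependent barrier
\begin{equation*}
\phi(x,t):=c\bigl(|x-x_0|^2+(t_0-t)\bigr)^{\kp/2},
\end{equation*}
where $c=c(n,q)>0$ is chosen small enough so that a direct computation (using $\kp q=\kp-2$) yields $H\phi\leq\phi^q$ away from the apex $(x_0,t_0)$; one checks that $c^{1-q}\leq[\kp(n+\kp-\tfrac32)]^{-1}$ suffices. By design, $\phi\geq cr^\kp$ on the entire parabolic boundary $\partial_pQ^-_r(x_0,t_0)$, because at each boundary point either $|x-x_0|=r$ on the lateral side or $t_0-t=r^2$ on the bottom, so $|x-x_0|^2+(t_0-t)\geq r^2$.

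The conclusion then follows by a contradiction argument. Assume $\sup_{Q^-_r(x_0,t_0)}|\u|<cr^\kp$, and set $V:=|\u|-\phi$: then $V<0$ on $\partial_pQ^-_r(x_0,t_0)$ yet $V(x_0,t_0)=|\u(x_0,t_0)|>0$ at the apex. On the non-empty open set $D:=\{V>0\}\cap Q^-_r(x_0,t_0)$ one has $|\u|>\phi>0$, and monotonicity of $s\mapsto s^q$ yields $HV\geq|\u|^q-\phi^q\geq0$. Since $V<0$ on the lateral and bottom faces, $\partial D$ does not meet them, so the parabolic boundary $\partial_pD$ is contained in $\{V=0\}$. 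The weak parabolic maximum principle for sub-caloric functions now forces $\sup_{\overline{D}}V\leq 0$, contradicting $V(x_0,t_0)>0$.

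The main obstacle in this scheme is the choice of the barrier. A purely radial, time-independent function $c|x-x_0|^\kp$ would vanish at the center of the bottom face of $Q^-_r(x_0,t_0)$ and thereby fail to provide the uniform lower bound $\geq cr^\kp$ on the full parabolic boundary. The linear time-dependent term $(t_0-t)$ remedies this defect while still preserving the subsolution inequality $H\phi\leq\phi^q$, at the cost of a modest adjustment of the universal constant.
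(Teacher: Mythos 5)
Your argument is correct, and it is a close cousin of the paper's proof rather than a verbatim copy: both are barrier comparisons on a lower cylinder centred at (an approximation of) the point, concluding via the maximum principle that a positive value at the apex forces a large supremum on the parabolic boundary. The difference is in the implementation. The paper transforms the solution, setting $U=|\u|^{1-q}$, which satisfies $\Delta U-\partial_t U\geq(1-q)-\frac{1+q}{1-q}\frac{|\nabla U|^2}{U}$, and then compares $U$ with the \emph{quadratic} barrier $c(|x-y|^2+(s-t))$; the price is that the negative gradient term must be absorbed into a drift-modified operator $\mathcal L-\partial_t$ before the maximum principle applies. You instead keep $|\u|$ itself, using only Kato's inequality to get $H|\u|\geq|\u|^q$ in $\{|\u|>0\}$, and compare with the $\kappa$-homogeneous barrier $c\,(|x-x_0|^2+(t_0-t))^{\kappa/2}$, handling the nonlinearity by monotonicity of $s\mapsto s^q$ on the set $\{|\u|>\phi\}$; your constant computation ($c^{1-q}\leq[\kappa(n+\kappa-\tfrac32)]^{-1}$) checks out, as does the identity $\kappa q=\kappa-2$. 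Note that the two barriers are the same object up to the power transform ($\phi^{1-q}=c^{1-q}(|x-x_0|^2+(t_0-t))$), so what your route buys is the avoidance of the drift term, at the cost of invoking the weak maximum principle on the non-cylindrical set $D=\{V>0\}\cap Q_r^-$ (standard; alternatively extend $V^+$ by zero to all of $Q_r^-$). Two small points to tidy up: in the reduction to $|\u(x_0,t_0)|>0$ you should, as the paper does, run the argument at nearby positivity points $(y,s)$ with a slightly smaller radius so the cylinders stay in $Q_1$, and then pass to the limit; and the subsolution inequality for $\phi$ is only needed in the open cylinder, where $|x-x_0|^2+(t_0-t)>0$, so the apex singularity of the second derivatives is harmless.
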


\begin{proof}
Let $ U(x,t):= |\u(x,t)|^{1-q}$. The proof follows in a standard way using
\begin{equation*}
\Delta U -\partial_t U = (1-q)+(1-q)\frac{ | \nabla \u |^2 }{U^{\kp-1}}-\frac{1+q}{1-q}\frac{|\nabla U|^2}{U}, \qquad\text{ in } \{U>0\}.
\end{equation*}
For any $( y, s) \in \{ | \u | > 0\} $, (close to $(x_0,t_0)$), set $w(x,t)=c(|x-y|^2+(s-t))$ for small constant $c>0$ to be specified later. 
Then $h=U-w$ satisfies in $ Q_r^-(y,s)$
\begin{align*}
\mathcal Lh-\partial_t h&:=\Delta h-\partial_t h+\frac{1+q}{1-q}\left(\frac{\nabla(U+w)}U\cdot\nabla h-\frac{4c}Uh\right)\\
&=(1-q)-4c(\frac{2n+1}4+\frac{1+q}{1-q})+(1-q)\frac{|\nabla\u|^2}{U^{\kp-1}}+4c^2\frac{1+q}{1-q}\frac{s-t}U\geq0,
\end{align*}
provided that $c$ is small enough. In particular $h$ cannot attain a local maximum in $Q_r^-(y,s)\cap \{ | \u | > 0\}$ according to the maximum principle for $\mathcal L-\partial_t$. 
On the other hand $h<0$ on $\partial\{|\u|>0\}$ and hence the positive maximum of $h$ is attained on  $ \partial_pQ_r^-(y,s)$, and we conclude that
\[
\sup_{\partial_pQ_r^-(y,s)}(U-w)\geq U(y,s)>0,
\]
which amounts to 
\[
\sup_{\partial_pQ_r^-(y,s)}U\geq cr^2.
\]
Letting $(y,s)\rightarrow(x_0,t_0)$, we arrive at the statement of the proposition.
\end{proof}


\section{Regularity of solutions}\label{sec:quadratic}
In this section we   study the regularity of solutions (to  equation \eqref{system}), which  
according to the parabolic  regularity theory,  are  known  to be  $C_x^{1,\beta}\cap C_t^{0,(1+\beta)/2}$ for $q=0$ and $C_x^{2,\beta}\cap C_t^{1,\beta/2}$ for $q>0$.  
Here we will show the optimal  growth for solutions from points where $u$ vanish to the highest order for our problem.
In order to study the optimal growth (regularity) of solution, we start with the following definition; see also \cite{soave2018nodal}.
\begin{definition}
The vanishing order of $\emph{\u}$ at point $X_0$ is defined to be  the largest value $\mathcal V(X_0)$ which satisfies 
$$
\limsup_{r\rightarrow0^+}\frac{\lVert\emph{\u}\rVert_{L^\infty(Q_r^-(X_0))}}{r^{\mathcal V(X_0)}}<+\infty.
$$
\end{definition}
One of main tools in studying a sublinear equation is Lemma \ref{holder-regularity}, which is the dual of 
 \cite[Lemma 1.1]{caffarelli1985partial} for the elliptic case. For the convenience of reader we put the proof in the Appendix. 
One of the useful result of Lemma \ref{holder-regularity} is that if $\u$ is a solution of \eqref{system} and $X_0\in\Gamma(\u)$, then 
$\mathcal V(X_0)\in\{1,2,3,\cdots,\lfloor\kp\rfloor,\kp\}$. 
Moreover, we can find out easily that if $\mathcal V(X_0)=s<\kp$, then $\partial_t^i\partial_x^\mu\u(X_0)$ exists and vanishes for $2i+|\mu|<s$. Indeed, there is a self-similar vectorial polynomial $P$ of degree $s$ such that $|\u(X)-P(X)|\leq C|X-X_0|^s$.
Our main result for case $q>0$ is that if $X_0\in \Gamma^\kp(\u)=\{X_0\in\Gamma(\u): \partial_t^i\partial_x^\mu\u(X_0)=0\text{ for all }2i+|\mu|<\kp\}$, then $\mathcal V(X_0)=\kp$.

We start with the following lemma which is essential to obtain our result.

\begin{lemma}\label{estimate-solution}
For any  $ \emph{\u}$ solving  \eqref{system} in $Q_2$, and  satisfying the doubling 
\begin{equation}\label{doubling}
\lVert\emph{\u}\rVert_{L^\infty(Q_2^-)}\leq   2^\kp \lVert\emph{\u}\rVert_{L^\infty(Q_{1}^-)},
\end{equation}
we have 
$$
\lVert\emph{\u}\rVert_{L^\infty(Q_{1}^-)}\leq \max\left\{1,C\lVert \emph{\u}G^{1/(1+q)}\rVert_{L^{1+q}(Q_1^-)} \right\},
$$
where  $C$ is independent of  $\emph{\u}$.
\end{lemma}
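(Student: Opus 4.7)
My plan is to argue by contradiction and compactness, following the scheme of the elliptic predecessor referred to in the text. Suppose the conclusion fails; then for every $k\in\mathbb{N}$ there exists a solution $\u_k$ of \eqref{system} on $Q_2$ satisfying the doubling \eqref{doubling} with
\begin{equation*}
M_k := \lVert \u_k \rVert_{L^\infty(Q_1^-)} > \max\bigl\{1,\, k\,\lVert \u_k G^{1/(1+q)} \rVert_{L^{1+q}(Q_1^-)}\bigr\}.
\end{equation*}
I would normalize by setting $\v_k := \u_k / M_k$; then $\lVert \v_k \rVert_{L^\infty(Q_1^-)} = 1$, the doubling gives $\lVert \v_k \rVert_{L^\infty(Q_2^-)} \leq 2^\kp$, the normalized weighted integral satisfies $\lVert \v_k G^{1/(1+q)} \rVert_{L^{1+q}(Q_1^-)} < 1/k$, and $\v_k$ solves
\begin{equation*}
H\v_k = M_k^{q-1}\lvert\v_k\rvert^{q-1}\v_k\,\chi_{\{|\v_k|>0\}} =: g_k,
\end{equation*}
where $\lVert g_k \rVert_{L^\infty(Q_2^-)} \leq (2^\kp)^q$ because $M_k\geq 1$ and $q<1$.

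Next I would pass to a limit. Interior parabolic regularity applied to $H\v_k=g_k$, together with the uniform $L^\infty$-bound on $Q_2^-$ and the uniform bound on $g_k$, yields, after extraction of a subsequence, uniform convergence $\v_k \to \v_0$ on a neighborhood of $\overline{Q_1^-}$ to a continuous limit; in particular $\sup_{\overline{Q_1^-}}|\v_0|=1$. On the other hand, pointwise convergence $\v_k\to\v_0$ combined with Fatou's lemma gives
\begin{equation*}
\int_{Q_1^-} |\v_0|^{1+q} G \, dx\, dt \leq \liminf_{k\to\infty} \int_{Q_1^-} |\v_k|^{1+q} G \, dx\, dt = 0,
\end{equation*}
and since $G(x,t)>0$ throughout $Q_1^- = B_1\times(-1,0)$, this forces $\v_0 \equiv 0$ a.e., hence by continuity identically zero on $\overline{Q_1^-}$. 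This contradicts $\sup_{\overline{Q_1^-}}|\v_0|=1$.

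The main technical obstacle I anticipate is securing the uniform H\"older estimate on a full neighborhood of $\overline{Q_1^-}$, especially up to the top stratum $t=0$. The $L^\infty$ control coming from the doubling is only on $Q_2^-$, whose top coincides with the top of $\overline{Q_1^-}$, so a standard interior Schauder or Krylov--Safonov estimate applied to $Q_2^-$ alone does not reach that stratum. To get around this, I would exploit that $\u_k$ is a solution on the full cylinder $Q_2 = B_2\times(-4,4)$: a spatial cutoff combined with the heat-kernel (Duhamel) representation, together with the interior $C^{1,\alpha}$ estimate on $Q_2^-$ and the uniform bound on $g_k$, extends the $L^\infty$ bound for $\v_k$ slightly past $t=0$ on a set still compactly contained in $Q_2$. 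Once this uniform bound is in hand, standard interior parabolic H\"older estimates on $Q_2$ deliver the compactness used above, and the argument concludes routinely.
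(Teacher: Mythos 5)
Your proposal is correct and follows essentially the same route as the paper: argue by contradiction, normalize by $\lVert\u_k\rVert_{L^\infty(Q_1^-)}$, use the doubling to get a uniform bound $\lVert H\v_k\rVert_{L^\infty(Q_2^-)}\leq 2^{\kp q}$, and pass to a limit whose sup norm is $1$ while its weighted $L^{1+q}$ norm vanishes. Your worry about the top stratum $t=0$ is actually unnecessary, since the top is not part of the parabolic boundary of $Q_2^-$ and interior parabolic estimates reach $\overline{B_1}\times[-1,0]$ directly; your workaround via the solution on all of $Q_2$ is harmless but not needed.
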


\begin{proof}
Suppose the statement of the lemma fails. Then there is a sequence $\u_j$ satisfying the hypothesis of the lemma with \begin{equation}\label{contra1}
\lVert\u_j\rVert_{L^\infty(Q_{1}^-)}\geq1,\qquad\text{and}\qquad\lVert\u_j\rVert_{L^\infty(Q_{1}^-)}\geq j \lVert \u_jG^{1/(1+q)}\rVert_{L^{1+q}(Q_1^-)} .
\end{equation}
Define $\tilde \u_j = \u_j/ \lVert\u_j\rVert_{L^\infty(Q_{1}^-)}$, and insert in \eqref{contra1}, to arrive at 
\begin{equation}\label{contra2}
\frac1j \geq \lVert \tilde \u _jG^{1/(1+q)}\rVert_{L^{1+q}(Q_1^-)} .
\end{equation}
Since   $\tilde \u_j$ satisfies the doubling \eqref{doubling}, then it yields 
$$ \lVert H (\tilde \u_j)\rVert_{L^\infty(Q_{2}^-)}\leq2^{\kp q} \lVert \u_j\rVert_{L^\infty(Q_{1}^-)}^{q-1}\leq 2^{\kp q}.$$
Therefore we have a subsequence of $\tilde \u_j$ which converges to a limit function $\u_0$ satisfying 
$$
\lVert \tilde{\u}_0G^{1/(1+q)}\rVert_{L^{1+q}(Q_1^-)}= 0, \qquad \lVert\tilde \u_0\rVert_{L^\infty(Q_{1}^-)} = 1,
$$
which is obviously a contradiction.
\end{proof}

\begin{theorem}\label{thm:growth}
For $\emph{\u}$ a   solution to \eqref{system}, with  $(0,0)\in\Gamma^\kp(\emph{\u})$,  there exists a constant $C$ such that
\begin{equation*}
\sup_{Q^-_r} |\emph{\u}| \leq C r^{\kp}, \qquad  \forall \ 0<r<1/2 .
\end{equation*}
\end{theorem}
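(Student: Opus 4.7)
My plan is to argue by contradiction, combining the cutoff Weiss monotonicity from Theorem~\ref{cutoff} at the origin with the Almgren frequency lower bound from Lemma~\ref{Nr}(iii) via a doubling blow-up.

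Suppose the conclusion fails, so there is a sequence $r_j\searrow 0$ with $r_j^{-\kp}\sup_{Q_{r_j}^-}|\u|\to\infty$. Since $S(r):=r^{-\kp}\sup_{Q_r^-}|\u|$ is continuous on $(0,1)$, a pigeonhole selection produces $\rho_j\searrow 0$ with $S(\rho_j)\to\infty$ and $S(r)\le S(\rho_j)$ for all $r\in[\rho_j,1/2]$. Writing $M_j:=\sup_{Q_{\rho_j}^-}|\u|$, this yields the one-sided iterated doubling $\sup_{Q_{2^k\rho_j}^-}|\u|\le 2^{k\kp}M_j$ for every $k\ge 0$ with $2^k\rho_j\le 1/2$. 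The normalized blow-up $\tilde\u_j(x,t):=M_j^{-1}\u(\rho_j x,\rho_j^2 t)$ then satisfies $\|\tilde\u_j\|_{L^\infty(Q_1^-)}=1$ and $\|\tilde\u_j\|_{L^\infty(Q_{2^k}^-)}\le 2^{k\kp}$, and the identity $\kp(q-1)+2=0$ gives
$$H\tilde\u_j=S(\rho_j)^{q-1}|\tilde\u_j|^{q-1}\tilde\u_j\,\chi_{\{|\tilde\u_j|>0\}},$$
whose right-hand side tends to zero uniformly on compacta as $q<1$. Parabolic regularity then yields, along a subsequence, a locally uniform limit $\tilde\u_j\to\u_0$, where $\u_0$ is caloric on $\R^n\times(-\infty,0)$, nonzero (since $\|\u_0\|_{L^\infty(Q_1^-)}=1$), has polynomial growth $\|\u_0\|_{L^\infty(Q_R^-)}\le R^\kp$ for $R\ge 1$, and inherits $\partial_t^i\partial_x^\mu\u_0(0,0)=0$ for every $2i+|\mu|<\kp$ from the hypothesis $(0,0)\in\Gamma^\kp(\u)$.

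The core step is a two-sided squeeze on Almgren's frequency. Applied to $\u_0$ (the proof of Lemma~\ref{Nr} extends to vector-valued caloric functions by replacing $h^2,(Lh)^2$ with $|h|^2,|Lh|^2$), part~(iii) of the lemma with $\ell=\lceil\kp\rceil$ delivers $2\mathcal{N}(0+,\u_0)\ge\lceil\kp\rceil$. On the other hand, writing $\hat\u_j:=\u_{\rho_j}=S(\rho_j)\tilde\u_j$, Theorem~\ref{cutoff} at the origin gives $\W(\hat\u_j,1)\le C$, and after expanding in $\tilde\u_j$,
$$S(\rho_j)^2\!\!\int_{-4}^{-1}\!\!\!\int_{\R^n}\!\!\Bigl(|\nabla\tilde\u_j|^2+\tfrac{\kp|\tilde\u_j|^2}{2t}\Bigr)G\,dx\,dt+\tfrac{2}{1+q}S(\rho_j)^{1+q}\!\!\int_{-4}^{-1}\!\!\!\int_{\R^n}\!\!|\tilde\u_j|^{1+q}G\,dx\,dt\le C.$$
Dividing by $S(\rho_j)^2$ and using $S^{q-1}\to 0$, passage to the limit yields $\int(|\nabla\u_0|^2+\kp|\u_0|^2/(2t))G\,dx\,dt\le 0$, i.e.\ $\mathcal{N}(1,\u_0)\le\kp/2$. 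Monotonicity of $\mathcal{N}$ then forces $\lceil\kp\rceil\le\kp$, which is impossible whenever $\kp\notin\mathbb{N}$.

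I expect the main obstacle to be the borderline case $\kp\in\mathbb{N}$, where both frequency bounds are saturated: $\mathcal{N}\equiv\kp/2$ on $(0,1]$, so Lemma~\ref{Nr}(ii) forces $\u_0$ to be a nonzero $\kp$-homogeneous caloric polynomial rather than identically zero. To close this case I would retain the $|\u|^{1+q}$-term in the Weiss expansion --- whose limit is strictly positive thanks to $\|\u_0\|_{L^\infty(Q_1^-)}=1$ --- and combine it with the exact identity $\int(|\nabla w|^2+\kp|w|^2/(2t))G\,dx\,dt=0$ valid for every $\kp$-homogeneous caloric $w$, obtained by integrating by parts against $Lw=0$. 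This refines the Weiss inequality to $S(\rho_j)^2 I_j\le C-cS(\rho_j)^{1+q}$ with $I_j\to 0$, and quantifying the rate of convergence $I_j\to 0$ via $H^1_{\mathrm{loc}}$-compactness of $\tilde\u_j$ against the forcing term $S(\rho_j)^{q-1}$ yields the contradiction. This quantitative bookkeeping is where I anticipate most of the technical work to lie.
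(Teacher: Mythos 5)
Your treatment of the non-integer case is a genuinely different route from the paper's (the paper uses a Liouville argument on the blow-up, you squeeze Almgren's frequency between the lower bound of Lemma \ref{Nr}(iii) and an upper bound extracted from Theorem \ref{cutoff}), and that squeeze is workable; but even there you assert without proof the one point the paper explicitly flags as subtle, namely that the limit $\u_0$ ``inherits'' $\partial_t^i\partial_x^\mu\u_0(0,0)=0$ for $2i+|\mu|<\kp$. Pointwise vanishing of derivatives at a single point does not pass to locally uniform limits, and for $q>0$ the higher derivatives of $\u$ at a free boundary point only exist in the sense of a polynomial expansion; what is actually needed, and what you must supply to invoke Lemma \ref{Nr}(iii) with $\ell=\lceil\kp\rceil$, is a growth estimate $|\tilde\u_j(X)|\le C_\epsilon|X|^{\kp-\epsilon}$ uniform in $j$, which the paper obtains by bootstrapping Lemma \ref{holder-regularity}. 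Likewise, the Weiss functional must be applied to the cutoff $\eta\u$ (Theorem \ref{cutoff} is stated for $\v=\eta\u$), and your limit passage to $\int\bigl(|\nabla\u_0|^2+\kp|\u_0|^2/(2t)\bigr)G\le 0$ needs the Gaussian tightness estimates of Lemmas \ref{caloric-estimate-lemma} and \ref{appx-lemma2}. These are fillable gaps, but they are not filled.

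The genuine gap is the integer case $\kp\in\mathbb{N}$ (which includes $q=0$), where your argument stops at exactly the hard step. Your refined Weiss inequality does give $I_j:=\int\bigl(|\nabla\tilde\u_j|^2+\kp|\tilde\u_j|^2/(2t)\bigr)G\,dxdt\le -c\,S(\rho_j)^{q-1}$ for large $j$, and by Lemma \ref{appex-lemma3} one has $I_j=I(\tilde\u_j-P)$ for the caloric polynomial limit $P\in\cH$; so to conclude you must prove the matching quantitative lower bound $I(\tilde\u_j-P)\ge -o\bigl(S(\rho_j)^{q-1}\bigr)$. This is precisely where ``$H^1_{\mathrm{loc}}$-compactness'' cannot help: compactness yields $I(\tilde\u_j-P)\to 0$ with no rate, while the forcing $H\tilde\u_j=S^{q-1}f(\tilde\u_j)$ has size exactly $S^{q-1}$, the same order as the deficit you need to beat, so the comparison is borderline and requires a quantitative, perturbed (non-caloric) analogue of Lemma \ref{Nr}(iii) for the error $\tilde\u_j-P$, together with quantitative control of its low-order Taylor part at the origin --- none of which appears in your sketch, and none of which is available off the shelf in the paper (Lemma \ref{Nr} is proved only for caloric functions, by a non-quantitative compactness argument). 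The paper sidesteps this borderline comparison by a different normalization: it measures the distance $M_k$ of $(\eta\u)_{r_k}$ to the space $\cH$ of $\kp$-backward self-similar caloric functions, proves the key estimate \eqref{lem3.2-eq1} and Lemma \ref{estimate-solution} to guarantee $M_k\to\infty$ and $\int|\v_{r_k}|^{1+q}G\lesssim 1+M_k^2$, so that after dividing by $M_k$ the forcing vanishes in the limit; the resulting caloric limit $\w_0$ then falls under the caloric Lemma \ref{Nr}(iii) and is simultaneously forced into $\cH$ and orthogonal to $\cH$ with unit norm, a contradiction. Without an analogous quantitative mechanism, your proof of the theorem is incomplete in the integer case.
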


\begin{proof}
{\bf Case $\kp\notin\mathbb{N}$:} The proof in this case follows by standard blow-up and the use of Liouville's theorem, and the only subtle point would be to prove the blow-up solution will vanish at the origin, of order $\kappa$; the latter is taken care of in  Appendix. here is how it works out. If the statement of the theorem fails, then there exists a sequence $r_j\rightarrow0$ such that
$$
\sup_{Q_r^-}|\u|\leq jr^\kp,\qquad \forall r\geq r_j,\qquad \sup_{Q_{r_j}^-}|\u|=jr_j^\kp.
$$
In particular the function $\tilde\u_j(x,t)=\frac{\u(r_jx,r_j^2t)}{jr_j^\kp}$ satisfies
$$\sup_{Q_R^-}|\tilde\u_j|\leq R^\kp,\qquad\text{ for }1\leq R\leq\frac1{r_j},$$
with equality for $R=1$, along with
$$H\tilde\u_j=\frac{f(\tilde\u_j)}{j^{1-q}}\longrightarrow 0\ \text{ uniformly in }Q_R^-.$$
From this we conclude that there is a convergent subsequence, tending to a caloric function $\u_0$ with growth $\kp$, i.e.
\begin{equation}\label{u0-property}
\sup_{Q_R^-}|\u_0|\leq R^\kp,\quad\forall R\geq1,\quad\sup_{Q_1^-}|\u_0|=1,\quad H\u_0=0,
\end{equation}
and furthermore, $|\tilde\u_j(X)|\leq C_0|X|$ in $Q_1^-$ uniformly for some constant $C_0>0$ and all $j$.
Thus $|H\tilde\u_j|\leq |\tilde\u_j|^q\leq C_0^q|X|^q$ in $Q_1^-$.
Now if we apply Lemma \ref{holder-regularity} for each component of $\tilde\u_j=(\tilde u_j^1,\cdots,\tilde u_j^m)$, we obtain a caloric polynomial $P_j^i$ of degree at most $\lfloor 2+q\rfloor=2$ so that $|\tilde u_j^i(X)-P_j^i(X)|\leq C_1C_0|X|^{2+q}$ in $Q_1^-$ and the constant $C_1$ depends only on $n, q$ and an upper bound on $\lVert\tilde\u_j\rVert_{L^\infty(Q_1^-)}$. 
Since $(0,0)\in\Gamma^\kp(\u)$, so $P_j^i\equiv0$ and then $|\tilde u_j^i(X)|\leq C_1C_0|X|^{2+q}$.
By a bootstrap argument we find out the uniform estimate  $|\tilde\u_j(X)|\leq C_\epsilon|X|^{\kp-\epsilon}$ for every $\epsilon>0$. Therefore, we get
\begin{equation}\label{u0-vanishing derivatives}
\u_0(0,0)=\partial_t^i\partial_x^\mu\u_0(0,0)=0,\quad \text{ for all }2i+|\mu|<\kp.
\end{equation}
Obviously \eqref{u0-property} and \eqref{u0-vanishing derivatives}, along with the fact that $\kp\notin\mathbb{N}$, violates Liouville's theorem and we have a contradiction.

{\bf Case $\kp\in\mathbb{N}$:} 
Consider the function $\v=\eta\u$ where $ \eta \in C_0^\infty(B_{3/4})$ satisfies $0\leq\eta \leq 1$, 
and $\eta =1$ in $ B_{1/2}$. 
Fix $0< r<\frac12$,  let $ \rho_i:= 2^{-i}r, i =0,1,2,...$, and define $\v_{\rho_i}(x,t)= \v(\rho_ix,\rho_i^2t)/\rho_i^{\kp}$, then 
\begin{align}\label{lem3.2-eq1}
\int_{-r^2}^{0} \int_{B_r} |\u|^{1+q}G dxdt = &\sum_{i=1}^{\infty} \int_{-\rho_{i-1}^2}^{-\rho_i^2} \int_{B_{r} }|\v|^{1+q}G dxdt\notag\\
=&\sum_{i=1}^{\infty}\rho_i^{2\kp} \int_{-4}^{-1} \int_{B_{2^i} }|\v_{\rho_i}|^{1+q}G dxdt
\leq\sum_{i=1}^{\infty}\rho_i^{2\kp} \int_{-4}^{-1} \int_{\R^n }|\v_{\rho_i}|^{1+q}G dxdt\notag\\
=&\frac{1+q}2\sum_{i=1}^{\infty}\rho_i^{2\kp}\left(\W(\v_{\rho_i},1)-\int_{-4}^{-1} \int_{\R^n }\left(|\nabla\v_{\rho_i}|^2+\frac{\kp|\v_{\rho_i}|^2}{2t}\right)Gdxdt\right)\notag\\
=& \frac{1+q}2\sum_{i=1}^{\infty}\rho_i^{2\kp}\left(\W(\v,\rho_i)-\int_{-4}^{-1} \int_{\R^n }\left(|\nabla(\v_{\rho_i}-\p)|^2+\frac{\kp|\v_{\rho_i}-\p|^2}{2t}\right)Gdxdt\right)  \notag\\
\leq&\frac{1+q}2\sum_{i=1}^{\infty}\rho_i^{2\kp}\left(\W(\v,1)+F(1)+\int_{-4}^{-1} \int_{\R^n}\frac{\kp|\v_{\rho_i}-\p|^2}{-2t}Gdxdt\right)\notag\\
\leq&Cr^{2\kp}\left(1+\int_{-1}^{0} \int_{\R^n}\frac{\kp|\v_{r}-\p|^2}{-2t}Gdxdt\right),
\end{align} 
where we have used Lemma \ref{appex-lemma3}, $F$ is the function defined in Theorem \ref{cutoff} and  $\p\in\cH$, the space of all $\kp$-backward self-similar caloric vector-functions.
We now let  $\p = \pi_r$, where 
$$\pi_r={\rm argmin}_{\q\in\cH}\int_{-1}^{0} \int_{\R^n}\frac{|\v_{r}-\q|^2}{-t}Gdxdt ,$$
and  observe that 
\begin{equation}\label{eq:product}
\int_{-1}^{0} \int_{\R^n}\frac{(\v_{r}-\pi_r)\cdot\p}{-t}Gdxdt=0, \qquad\text{for every }\p\in\cH.
\end{equation}
Now suppose, towards a contradiction, that there is a sequence $r_k\rightarrow0$, such that 
$$
\sup_{Q_r^-}|\u|\leq kr^\kp,\qquad \forall r\geq r_k,\qquad \sup_{Q_{r_k}^-}|\u|=kr_k^\kp.
$$
Consider the scaling $\u_r(x,t)=\u(rx,r^2t)/r^\kp$, where  the sequence $\u_{r_k}$ satisfies the doubling condition \eqref{doubling} because
$$
\lVert\u_{r_k}\rVert_{L^\infty(Q_{2}^-)}=2^\kp\lVert\u_{2r_k}\rVert_{L^\infty(Q_{1}^-)}\leq 2^\kp k=2^\kp\lVert\u_{r_k}\rVert_{L^\infty(Q_{1}^-)}.
$$
Therefore  Lemma \ref{estimate-solution} and \eqref{lem3.2-eq1} implies that
$$M_k=\left(\int_{-1}^{0} \int_{\R^n}\frac{|\v_{r_k}-\pi_{r_k}|^2}{-t}Gdxdt\right)^{1/2}\longrightarrow\infty.$$
For $\w_k=\frac{\v_{r_k}-\pi_{r_k}}{M_k}$, we have 
\begin{equation}\label{lemma3:eq1}
\int_{-1}^{0} \int_{\R^n}\frac{|\w_k|^2}{-t}Gdxdt=1.
\end{equation}
Furthermore, we can show that $\{\nabla\w_kG^{1/2}\}$ is bounded in $L^2(-1,0;L^2(\R^n))$.
In order to show this, we can write 
\begin{align}\label{lemma3:eq2}
\int_{-1}^{0} \int_{\R^n }\left(|\nabla\w_{k}|^2+\frac{\kp|\w_{k}|^2}{2t}\right)Gdxdt
&=\frac1{M_k^2}\int_{-1}^{0} \int_{\R^n }\left(|\nabla\v_{r_k}|^2+\frac{\kp|\v_{r_k}|^2}{2t}\right)Gdxdt\notag\\
&=\frac1{M_k^2}\sum_{i=1}^{\infty}2^{-2i\kp} \int_{-4}^{-1} \int_{\R^n }\left(|\nabla\v_{2^{-i}r_k}|^2+\frac{\kp|\v_{2^{-i}r_k}|^2}{2t}\right)Gdxdt\notag\\
&\leq \frac1{M_k^2}\sum_{i=1}^{\infty}2^{-2i\kp} \W(\v,2^{-i}r_k)\longrightarrow0,
\end{align}
which together with  \eqref{lemma3:eq1} implies 
\begin{equation}\label{lemma3:eq3}
\int_{-1}^{0} \int_{\R^n }|\nabla\w_{k}|^2Gdxdt=O(1).
\end{equation}
On the other hand, we have 
\begin{equation}\label{eq:Hw}
H\w_k=\frac1{M_k}H\v_{r_k}=\frac1{M_k}\left(r_k^2\u_{r_k}\Delta\eta(r_kx)+\eta(r_kx)^{1-q}f(\v_{r_k})+2r_k\nabla\u_{r_k}\cdot\nabla\eta(r_kx)\right),
\end{equation}
and also,
\begin{align*}
\frac2{1+q} \int_{-1}^{0}\int_{\R^n}|\v_{r_k}|^{1+q}Gdxdt
&=\frac2{1+q}\sum_{i=1}^{\infty}2^{-2i\kp} \int_{-4}^{-1} \int_{\R^n }|\v_{2^{-i}r_k}|^{1+q}Gdxdt\notag\\
&=\sum_{i=1}^{\infty}2^{-2i\kp}\left(\W(\v_{2^{-i}r_k},1)-\int_{-4}^{-1} \int_{\R^n }\left(|\nabla\v_{2^{-i}r_k}|^2+\frac{\kp|\v_{2^{-i}r_k}|^2}{2t}\right)Gdxdt\right)\notag\\
&\leq \sum_{i=1}^{\infty}2^{-2i\kp}\left(\W(\v,1)+F(1)+\int_{-4}^{-1} \int_{\R^n }\frac{\kp|\v_{2^{-i}r_k}-\pi_{r_k}|^2}{-2t}Gdxdt\right)\notag\\
&=O(1)+\int_{-1}^{0}\int_{\R^n}\frac{\kp|\v_{r_k}-\pi_{r_k}|^2}{-2t}Gdxdt\notag\\
&=O(1)+\frac{M_k^2}{1-q}.
\end{align*}
Therefore by \eqref{eq:Hw}, we get for $q>0$
\begin{align*}
\int_{-1}^{0} \int_{\R^n }|H\w_k|^{(1+q)/q}Gdxdt\leq& \frac1{M_k^{(1+q)/q}}\left(C(\eta,\lVert\u\rVert_{H^1(B_1,\R^m)})+\int_{-1}^{0} \int_{\R^n } |\v_{r_k}|^{1+q}Gdxdt\right)\\
\leq&\frac1{M_k^{(1+q)/q}}\left(C(\eta,\lVert\u\rVert_{H^1(B_1,\R^m)},n,q)+\frac{M_k^2}{1-q}\right)\longrightarrow0,
\end{align*}
and $\lVert H\w_k\rVert_\infty\rightarrow0$ for $q=0$.
Hence $\{\w_k\}$ is bounded in $W^{1,p}(-1,\frac1R;W^{2,p}(B_R))$ for all fixed $R>0$ and by diagonalization technique there is a weakly convergent subsequence with limit $\w_0$ in $L^2(-1,0;W^{2,p}_{\rm loc}(\R^n))$, satisfying $H\w_0=0$. 
We claim next that  the strong convergence 
\begin{equation}\label{lemma3:eq3.5}
\frac{\w_k}{(-t)^{1/2}}G^{1/2}\rightarrow \frac{\w_0}{(-t)^{1/2}}G^{1/2},\qquad\text{ in }L^2(-1,0;L^2(\R^n)),
\end{equation}
holds, which follows  if we prove the  uniform convergence in $k$ 
\begin{equation}\label{lemma3:eq4}
\int_{-1}^{0} \int_{\R^n\setminus B_R }\frac{|\w_k|^2}{-t}Gdxdt\rightarrow 0,\qquad\text{ as }R\rightarrow\infty .
\end{equation}
This can obviously be obtained by applying Lemma \ref{appx-lemma2} and relations \eqref{lemma3:eq1} and \eqref{lemma3:eq3}
\begin{align*}
R^2\int_{-1}^{-0} \int_{\R^n\setminus B_R }\frac{|\w_k|^2}{-t}Gdxdt\leq \int_{-1}^{0} \int_{\R^n }|\w_k|^2\frac{|x|^2}{-t}Gdxdt=O(1).
\end{align*}
Therefore we get
$$
\int_{-1}^{0} \int_{\R^n}\frac{|\w_0|^2}{-t}Gdxdt=1,
$$
 and also by \eqref{eq:product},
\begin{equation}\label{eq:product-w0}
\int_{-1}^{0} \int_{\R^n}\frac{\w_0\cdot\p}{-t}Gdxdt=0, \qquad\text{for every }\p\in\cH.
\end{equation}
Moreover, from  \eqref{lemma3:eq2}, \eqref{lemma3:eq3.5} and weakly lower semicontinuity of norm we obtain for every $R>0$
\begin{align*}
\int_{-1}^{0} \int_{B_R }\left(|\nabla\w_0|^2+\frac{\kp|\w_0|^2}{2t}\right)Gdxdt
\leq\lim_{k\rightarrow\infty}\int_{-1}^{0} \int_{\R^n }\left(|\nabla\w_k|^2+\frac{\kp|\w_k|^2}{2t}\right)Gdxdt\leq0.
\end{align*}
It implies that
\begin{equation}\label{eq:arg}
0\geq\int_{-1}^{0} \int_{\R^n }\left(|\nabla\w_0|^2+\frac{\kp|\w_0|^2}{2t}\right)Gdxdt=
\sum_{i=1}^{\infty}2^{-2i\kp}\int_{-4}^{-1} \int_{\R^n }\left(|\nabla\w_{0,2^{-i}}|^2+\frac{\kp|\w_{0,2^{-i}}|^2}{2t}\right)Gdxdt.
\end{equation}
If we further have
\begin{equation}\label{eq:zero-diff}
|\partial_t^\ell\partial_x^\mu w_0^j(0,0)|=0,\qquad\text{for }2\ell+|\mu|\leq\kp-1,
\end{equation}
 then by  Lemma \ref{Nr}, each component $w_0^j$ of $\w_{0,2^{-i}}$ must satisfy 
$$
\int_{-4}^{-1} \int_{\R^n }\left(|\nabla w_0^j|^2+\frac{\kp|w_0^j|^2}{2t}\right)Gdxdt\geq0.
$$
Summing over $j$ and comparing with \eqref{eq:arg}, implies that $w_0^j$ is a $\kp$-backward self-similar caloric function. 
But \eqref{eq:product-w0} implies that $\w_0=0$ which contradicts \eqref{lemma3:eq4}.

To close the argument, we need to prove \eqref{eq:zero-diff}. This can be shown by invoking Lemma \ref{holder-regularity} to obtain uniform estimate  $\lVert\w_k\rVert_{L^\infty(Q_r^-)}=o(r^{\kp-1})$. 
To apply Lemma \ref{holder-regularity}
it is necessary to show  the uniform estimate $\lVert H\w_k\rVert_{L^\infty(Q_r^-)}=o(r^{\kp-2-1/2})$. 
Since we have assumed  $\lVert\u_{r_k}\rVert_{L^\infty(Q_{1}^-)}\rightarrow\infty$ by contradiction, the scaled sequence $\tilde\u_k:=\u_{r_k}/\lVert\u_{r_k}\rVert_{L^\infty(Q_{1}^-)}$ satisfies $H\tilde\u_k=f(\tilde\u_k)/\lVert\u_{r_k}\rVert_{L^\infty(Q_{1}^-)}^{1-q}\rightarrow0$ and converges to a caloric function $\tilde\u_0$ as a subsequence. 
Moreover, $|\tilde\u_k(X)|\leq C_0|X|$ for a constant $C_0>0$ and all $k$. Now apply Lemma \ref{holder-regularity} repeatedly to obtain the uniform estimate $|\tilde\u_k(X)|\leq C_\epsilon|X|^{\kp-\epsilon}$ for a small value $\epsilon$.
So, $|\u_{r_k}(X)|\leq C_\epsilon\lVert\u_{r_k}\rVert_{L^\infty(Q_1^-)}|X|^{\kp-\epsilon}$ and by Lemma \ref{estimate-solution} as well as \eqref{lem3.2-eq1}
$$
|H\w_k(X)|\leq \frac1{M_k}|f(\u_{r_k}(X))|\leq \frac{C_\epsilon^q}{M_k}(1+M_k^2)^{q/(1+q)}|X|^{\kp-2-\epsilon q}\leq C|X|^{\kp-2-\epsilon q}.
$$
\end{proof}

\begin{remark}\label{forward-regularity}
Although Theorem \ref{thm:growth} shows the backward regularity, we can see obviously the regularity in forward problem. 
A line of proof can be considered toward a contradiction and assuming the sequence $r_j\rightarrow0$ such that 
$$
\sup_{Q_r^+}|\emph{\u}|\leq jr^\kp,\qquad \forall r\geq r_j,\qquad \sup_{Q_{r_j}^+}|\emph{\u}|=jr_j^\kp.
$$
 Then $ \emph{\u}_j(X)=\emph{\u}(r_jX)/(jr_j^\kp)$ converges to a caloric function $\emph{\u}_0$ in $\R^n\times\R$ with polynomial growth, $\lVert\emph{\u}_0\rVert_{L^\infty(Q_1^+)}=1$ and $\emph{\u}_0\equiv0$ for $t\leq0$ (we also apply here Theorem \ref{thm:growth}). This contradicts the uniqueness of heat equation solution with polynomial growth in forward problem.
 \end{remark}


\section{Homogeneous global solutions}\label{sec:global-solution}

In this section we perform energy classification of regular free boundary points, that will be needed later  in order  to establish the H\"{o}lder regularity of the time derivative $ \partial_t \u $ in the next section.  Indeed the main goal is to show that half-space solutions are isolated within certain topologies. 
The proofs for the case $q=0$ and $q > 0$ differs to some extent and hence we are forced to consider them separately. For the  case $q=0$ we need to consider  two lemmas (Lemmas \ref{halfspace}, and \ref{closenesstohalfspace}) that will give us the result. The proof for the case  $q>0$ takes a different turn, and is shown in the proof of Proposition \ref{H-isolated}.

\begin{lemma} \label{halfspace}
Let $q=0$ and  $ \emph{\u}$ be a backward self-similar solution to \eqref{system}. If $ \{ |\emph{\u}|>0 \}\cap Q_1^-\subset \{ x_n>-\delta\}$, where $\delta >0$ is small, then $ \emph{\u}\in \mathbb{H}$.
\end{lemma}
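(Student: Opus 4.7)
The plan is to argue by contradiction and compactness. Assume the lemma fails: there exist $\delta_k \to 0^+$ and backward $\kp$-self-similar solutions $\u_k$ of \eqref{system} (with $q=0$, so $\kp = 2$) satisfying $\{|\u_k|>0\} \cap Q_1^- \subset \{x_n > -\delta_k\}$, yet $\u_k \notin \mathbb{H}$. After normalizing so that $\sup_{Q_1^-}|\u_k|$ is comparable to $1$, and exploiting that self-similarity reduces each $\u_k$ to a fixed time-$t = -1$ profile via $\u_k(x,t) = (-t)\phi_k(x/\sqrt{-t})$, one obtains uniform interior $C^{1,\alpha}$ estimates from Theorem \ref{thm:growth} and standard parabolic regularity. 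A subsequence converges locally uniformly to a limit $\u_\infty$, again backward self-similar of degree $2$, solving \eqref{system}, with $\{|\u_\infty| > 0\} \subset \{x_n \geq 0\}$ (inside $Q_1^-$ by convergence, then globally by self-similarity of $\u_\infty$, after observing via the parabolic scaling $(y,s)\mapsto(y/r,s/r^2)$ that any positivity point maps into $Q_1^-$ for $r$ sufficiently large).

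The central step is to classify such $\u_\infty$: any backward self-similar solution of \eqref{system} supported in the closed half-space $\{x_n \geq 0\}$ must lie in $\mathbb{H}$. For this I would pass to the scalar $v := |\u_\infty|$, which by Kato's inequality and the cooperative structure of the system satisfies
\[
\Delta v - \partial_t v = 1 + \frac{|\nabla \u_\infty|^2 - |\nabla v|^2}{v} \geq 1 \quad \text{on } \{v > 0\};
\]
it vanishes on $\{x_n \leq 0\}$ and is $2$-self-similar. The half-space barrier $w(x) := \tfrac{1}{2}(x_n)_+^2$ is $2$-self-similar, vanishes on $\{x_n \leq 0\}$, and saturates $\Delta w - \partial_t w = 1$. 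A parabolic maximum principle on $\{v > 0\}$ (controlling spatial infinity by the matching self-similar growth) gives $v \leq w$; nondegeneracy (Proposition \ref{Nondegeneracy}) combined with the matching zero trace on $\{x_n = 0\}$ then forces $v \equiv w$. This saturates Kato's inequality, so the unit direction $\e := \u_\infty/|\u_\infty|$ is constant on the connected positivity set, yielding $\u_\infty = w\,\e \in \mathbb{H}$.

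Finally, the contradiction is reached by a rigidity of $\mathbb{H}$ among self-similar solutions. Since $r \mapsto \W(\u_k, r)$ is constant on backward self-similar solutions (Theorem \ref{monotonicity}) and $\W(\u_\infty, 1) = A_q$, continuity of $\W$ under the convergence gives $\W(\u_k, 1) \to A_q$; together with the classification, this forces $\u_k \in \mathbb{H}$ for large $k$, contradicting the assumption. The main obstacle is the classification step, in particular transferring the scalar half-space characterization of $v$ back to the vector $\u_\infty$. This relies crucially on the cooperative structure of the system and the simple form $f(\u) = \u/|\u|$ at $q=0$, which makes $v$ a supersolution of an equation with constant right-hand side, directly comparable against the explicit quadratic barrier $w$.
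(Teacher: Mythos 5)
Your scheme has two genuine gaps, one structural and one technical. The structural one is the endgame: after extracting a limit $\u_\infty\in\bH$ you conclude that ``together with the classification, this forces $\u_k\in\mathbb{H}$ for large $k$.'' That is a non sequitur: convergence of $\u_k$ to a half-space solution, together with $\W(\u_k,1)\to A_q$, does not by itself imply membership of $\u_k$ in $\bH$. What is needed is exactly the isolation of half-space solutions within the class of backward self-similar solutions (Proposition \ref{H-isolated}), or equivalently the equality case of Theorem \ref{lowenergy} — and in the paper both of these are \emph{deduced from} the present lemma (together with Lemma \ref{closenesstohalfspace}) when $q=0$, so inside the paper's architecture your argument is circular, and as a blind proof it is missing its key rigidity ingredient. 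The paper's proof avoids compactness entirely: it works with a fixed small $\delta$, restricts to the time slice $t=-1/2$, observes that each component $u^k$ is an eigenfunction with eigenvalue $2$ of $\L=-\Delta+x\cdot\nabla+|\u|^{-1}$ on $\{|\u|>0\}\subset\{x_n>-\delta\}$, and shows $\lambda_2(\L,\{x_n>-\delta\})>2$ (from $\lambda_2(\L_0,\R^n_+)=3$ and continuity in $\delta$), so that $2$ is the first eigenvalue; simplicity of the first eigenfunction gives $u^k=c_k|\u|$ on each component, and $|\u|$ is then a self-similar solution of the scalar problem $HU=\chi_{\{U>0\}}$, hence a half-space solution.

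The classification step for the limit is also not proved as written. You compare $v=|\u_\infty|$ with $w=\tfrac12(x_n)_+^2$ via a ``parabolic maximum principle \dots controlling spatial infinity by the matching self-similar growth,'' but $\{v>0\}$ is unbounded and $v-w$ is subcaloric with quadratic growth, and Phragm\'en--Lindel\"of fails at this growth rate: $x_nx_1$ is caloric, parabolically $2$-homogeneous, vanishes on $\{x_n=0\}$, yet is positive in part of the half-space, so equal homogeneity gives no sign information and $v\le w$ does not follow. Moreover, even granting $v\le w$, the assertion that ``nondegeneracy combined with the matching zero trace on $\{x_n=0\}$ forces $v\equiv w$'' is not an argument: the contact set lies on the boundary of $\{w>0\}$, where the strong maximum principle yields nothing without substantial extra work — the rigidity you invoke is essentially the statement to be proved. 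A smaller but real issue is the normalization: since the right-hand side $\u/|\u|$ is not invariant under multiplication of $\u$ by constants (and parabolic rescaling acts trivially on self-similar functions), you cannot normalize $\sup_{Q_1^-}|\u_k|$ to be of order one; a uniform bound on the family of self-similar solutions must be proved separately, and Theorem \ref{thm:growth} as stated gives a constant depending on the individual solution, so it does not supply the uniform $C^{1,\alpha}$ compactness you use.
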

\marginpar{\small \color{red}}

\begin{proof}
Let $\u$ be a backward self-similar solution, and recall  that  
the condition of homogeneity  (for each component) is 
\begin{equation}
L u^k := 2t \partial_t u^k +x \cdot \nabla u^k -2 u^k =0.
\end{equation}
Hence we obtain the following equation for each component
\begin{equation}
\frac{2 u^k-x \cdot \nabla u^k}{-2t}+\Delta u^k- \frac{u^k}{| \u|} \chi_{\{| \u|>0 \}}=0,
\end{equation}
and
\begin{equation}
2t \Delta u^k+x \cdot \nabla u^k-2t \frac{u^k}{| \u|} \chi_{\{| \u|>0 \}}=2 u^k.
\end{equation}

Denote by $ \L_0:=- \Delta + x \cdot \nabla $ and $\L:=\L_0+\frac{1}{|\u|}$. Then for 
$t=-\frac{1}{2}$, any $ u^k $ is an eigenfunction of $\L$ in $\{U>0\}$ corresponding to the eigenvalue $\lambda=2$.

\par
We want to show that  $\lambda =2  $ is the first eigenvalue for $\L$, since then $ u^k=c_k | \u|$ in each connected component of $\{|\u|>0\}$, and we have a scalar problem for $ |\u|$ when $ t =-1/2$.
It is sufficient to show that $2$ is not larger than  the second eigenvalue for $ \L_0$.

We prove that for some $ \delta >0$, $ \lambda_2(\L, \{x_n>-\delta \})>2$, which implies  $ \lambda_1(\L, \{x_n>-\delta \})=2. $
Since\footnote{This follows from a simple computation for one dimensional case, and the fact that eigenvalues decrease by symmetrisation, and translation invariance of the set $\R^n_+$ in directions orthogonal to $\e_n$.}
  $ \lambda_2(\L_0,\R^n_+)=3$,  we have 
\begin{equation}
 \lambda_2(\L, \{x_n>-\delta \})> \lambda_2(\L_0, \{x_n>-\delta \})\geq  \lambda_2(\L_0, \R^n_+)-\omega(\delta)=3-\omega(\delta),
\end{equation}
where $\omega(\delta)$ is the modulus of continuity of $  \lambda_2(\L_0, \{x_n>-\delta \})$. By choosing $ \delta >0$ small, we will obtain $ \lambda_2(\L, \{x_n>-\delta \})>2$,
implying that  $  \lambda_1(\L, \{x_n>-\delta \})=2$. Hence $ \u=\c |\u|$ in each connected component of $\{ |\u|>0\}$, where $ \c \in \R^m$
depends on the component and $ |\c|=1$ . It remains to observe that for $t= -1/2$, the function $U= |\u|$
is a homogeneous stationary solution to the equation $ H U =\chi_{\{ U >0\}}$, and therefore $U  $ is a half-space solution and $\u \in \mathbb{H}$. 
\end{proof}

\begin{lemma} \label{closenesstohalfspace}
(Closeness to half-space) 
Let $q=0$ and    $\emph{\u}$ be a backward self-similar solution to the system \eqref{system} with the property 
\begin{equation}
\iint_{Q_1^-} |\emph{\u}-\emph{\h}|Gdx dt <\varepsilon,
\end{equation}
where $ \emph{\h}= \frac{(x_n^+)^2}{2} \emph{\e}_1$.
Then 
\begin{equation}\label{epsilonbeta}
\{ |\emph{\u}|>0\} \cap Q_{1/2}^- \subset \{ (x,t): x_n >-C\varepsilon^{\beta}\}.
\end{equation}
for  $ C=C(n,m)$, and $\beta = \beta (n)$.
\end{lemma}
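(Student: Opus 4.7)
The strategy is to contradict closeness to $\h$ using the nondegeneracy of $\u$. Since $\h \equiv 0$ on $\{x_n<0\}$, the hypothesis forces $|\u|$ to be small in $L^1(G\,dx\,dt)$ on that half-space, while nondegeneracy (Proposition \ref{Nondegeneracy}) forces $|\u|$ to be pointwise large on the order of $r^\kp = r^2$ near any point of $\overline{\{|\u|>0\}}$. Quantifying this contradiction will yield the $\varepsilon^\beta$ bound.

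Concretely, suppose toward a contradiction that there is $(x_0,t_0) \in \{|\u|>0\} \cap Q_{1/2}^-$ with $x_0 \cdot \e_n \leq -\delta$. Set $r := \delta/4$. First, I would apply Proposition \ref{Nondegeneracy} on $Q_r^-(x_0,t_0) \subset \{x_n < -\delta/2\} \cap Q_1^-$ to produce a point $(y_0,s_0) \in Q_r^-(x_0,t_0)$ with $|\u(y_0,s_0)| \geq c_1 r^2$, $c_1=c_1(n)$. Next, since the right-hand side $f(\u)$ of \eqref{system} is uniformly bounded (by $1$ when $q=0$; more generally by a power of $\|\u\|_{L^\infty}$), standard parabolic estimates give a Lipschitz bound $\|\nabla \u\|_{L^\infty(Q_{3/4}^-)} + \|\partial_t \u\|_{L^\infty(Q_{3/4}^-)} \leq L$ in terms of an $L^\infty$ bound on $\u$. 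Using this, propagation of $|\u|$ away from $(y_0,s_0)$ shows $|\u| \geq c_1 r^2/2$ on a parabolic region $B_{c_2 r^2}(y_0) \times (s_0 - c_3 r^2, s_0]$, for constants $c_2,c_3$ depending on $c_1$ and $L$.

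For $\varepsilon$ small (hence $r$ small), this propagation region is contained in $\{x_n < -\delta/4\} \cap Q_1^-$, where $\h \equiv 0$ and the Gaussian kernel satisfies $G \geq G_0 = G_0(n) > 0$. Therefore
\[
\varepsilon > \iint_{Q_1^-} |\u - \h| G\,dx\,dt \ \geq\ \frac{c_1 r^2}{2}\cdot G_0 \cdot \bigl|B_{c_2 r^2}(y_0) \times (s_0-c_3 r^2,s_0]\bigr|\ =\ c_4 r^{2n+4},
\]
which rearranges to $\delta \leq C(n,m)\, \varepsilon^{1/(2n+4)}$, yielding \eqref{epsilonbeta} with $\beta = 1/(2n+4)$. (For $\delta \geq 1/2$ the conclusion is trivial after enlarging $C$, so we may indeed assume $\delta$ small.)

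The main obstacle is keeping constants independent of $\u$, in particular obtaining a uniform $L^\infty$ bound on $\u$ in $Q_1^-$ so that the Lipschitz constant $L$ is universal. This is where the hypothesis is essential: the $L^1(G\,dx\,dt)$-closeness to $\h$, combined with the backward self-similarity of $\u$ and the doubling-type estimate of Lemma \ref{estimate-solution}, controls $\|\u\|_{L^\infty(Q_1^-)}$ by a universal constant once $\varepsilon$ is small. A sharper gradient bound $|\nabla\u|\leq Cr$ on $Q_r^-(x_0,t_0)$, obtainable by rescaling around $(x_0,t_0)\in\Gamma^\kp(\u)$ using the optimal growth Theorem \ref{thm:growth}, would enlarge the propagation region to a full parabolic cylinder of radius $\sim r$ and improve the exponent to $\beta = 1/(n+4)$; either value is consistent with $\beta=\beta(n)$ as claimed.
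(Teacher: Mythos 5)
Your proof is correct and follows essentially the same route as the paper: the paper likewise takes a positivity point with $x_n=-\varrho$, notes that $Q_\varrho^-$ around it lies in $\{x_n<0\}$ where $\h=0$, invokes Proposition \ref{Nondegeneracy} to get a point with $|\u|\geq c\varrho^2$, propagates this lower bound to a small subcylinder, and bounds $\varepsilon$ from below by the Gaussian-weighted mass of $|\u|$ there, obtaining $\beta=1/(n+4)$. Your cruder propagation step (a uniform Lipschitz bound instead of the scaled modulus of continuity) only worsens the exponent to $\beta=1/(2n+4)$, which is still of the admissible form $\beta=\beta(n)$.
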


\begin{proof}
The proof is standard and follows from the nondegeneracy.
Let $(x_0,t_0) \in  \{ |\u|>0\} \cap  Q_{1/2}^-$, and $x_n^0=-\varrho <0$, then
\begin{equation}
\iint_{Q_\varrho^-} |\u|Gdx dt \leq \iint_{Q_1^-} |\u-\h| G dx dt \leq \varepsilon.
\end{equation}
By the nondegeneracy, there exists $X \in Q_\varrho^-(x_0,t_0)$, such that 
$$|\u(X)|=\sup_{Q_\varrho^-(x_0,t_0)} |\u|\geq c_n \varrho^2.$$
Then for a small $r>0$, 
$$ \inf_{Q_r^-(X)} |\u|\geq c_n \varrho^2-C_n r^2 \geq C \varrho ^2, $$
and
\begin{equation}
\varepsilon \geq \iint_{Q_r^-(X)}  |\u| Gdx dt \geq C \varrho^{n+4}.
\end{equation}
Now \eqref{epsilonbeta} follows with $\beta =\frac{1}{n+4}$.
\end{proof}
The next proposition shows that the half-space solutions in $\bH$ are isolated in the class of $\kp$-backward self-similar solutions. 
 
\begin{proposition}\label{H-isolated}
The half-space solutions are isolated (in the topology of $L^{2}(-1,0;H^1(B_1;\R^m))$) within the class of backward self-similar solutions of degree $\kp$.
\end{proposition}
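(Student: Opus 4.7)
The plan is by contradiction. Assume $(\u_j)$ is a sequence of $\kp$-backward self-similar solutions to \eqref{system}, none in $\mathbb{H}$, converging to some $\h\in\mathbb{H}$ in $L^2(-1,0;H^1(B_1;\R^m))$. After a rotation and an orthonormal change of basis in $\R^m$, take $\h(x,t)=\alpha(x_n)_+^{\kp}\e_1$. I would split according to the regularity of the nonlinearity $f$.

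For $q=0$: The backward heat kernel $G$ is bounded on $Q_1^-$, so the $L^2$-convergence upgrades to
\[
\varepsilon_j:=\iint_{Q_1^-}|\u_j-\h|\,G\,dx\,dt\longrightarrow 0.
\]
Applying Lemma \ref{closenesstohalfspace} to each $\u_j$ yields $\{|\u_j|>0\}\cap Q_{1/2}^-\subset\{x_n>-C\varepsilon_j^{\beta}\}$. Since $\u_j$ is backward self-similar of degree $\kp$, its positivity set is invariant under the parabolic dilation $(x,t)\mapsto(rx,r^2t)$; rescaling with $r=2$ upgrades the inclusion to $\{|\u_j|>0\}\cap Q_1^-\subset\{x_n>-2C\varepsilon_j^{\beta}\}$. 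Once $j$ is large enough that $2C\varepsilon_j^{\beta}$ lies below the small threshold $\delta$ of Lemma \ref{halfspace}, that lemma forces $\u_j\in\mathbb{H}$, contradicting our assumption.

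For $q>0$: Now $f(\u)=|\u|^{q-1}\u$ is continuous across $\{|\u|=0\}$ and Lemmas \ref{halfspace}--\ref{closenesstohalfspace} are unavailable. The plan is to use self-similarity to recast the statement as an elliptic one. Writing $\Phi_j(x):=\u_j(x,-1)$ and using $L\u_j\equiv 0$, $\Phi_j$ solves the Ornstein--Uhlenbeck type equation
\[
\Delta\Phi_j-\tfrac{1}{2}x\cdot\nabla\Phi_j+\tfrac{\kp}{2}\Phi_j=|\Phi_j|^{q-1}\Phi_j\quad\text{in }\R^n,
\]
with half-space solutions corresponding exactly to $\Phi_{\h}(x)=\alpha(x_n)_+^{\kp}\e_1$. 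Self-similarity converts $L^2(-1,0;H^1(B_1))$-convergence into convergence of $\Phi_j\to\Phi_{\h}$ in a Gaussian-weighted Sobolev space on $\R^n$. One then linearizes at $\Phi_{\h}$ and invokes the non-degeneracy property that the kernel of the linearization coincides with $T_{\h}\mathbb{H}$ (rotations of $\nu$ and orthogonal changes of $\e$), while the operator is strictly coercive transverse to it. An Implicit Function Theorem on the quotient by the finite-dimensional symmetry group of $\mathbb{H}$ then identifies all nearby backward self-similar solutions with $\mathbb{H}$, contradicting $\u_j\notin\mathbb{H}$.

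The hard part is the spectral step in the $q>0$ case: establishing the non-degeneracy of the linearized weighted operator on $\R^n$ transverse to $T_{\h}\mathbb{H}$, which mirrors at the linearized level the eigenvalue computation behind Lemma \ref{halfspace} but in a vectorial Gaussian-weighted setting with a H\"older zeroth-order term $Df(\Phi_{\h})$. The $q=0$ case, by contrast, is a bookkeeping reduction to the two lemmas just proved, combined with the parabolic dilation invariance of $\{|\u_j|>0\}$.
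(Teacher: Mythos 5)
Your $q=0$ argument is essentially the paper's (the paper disposes of that case in one line by citing Lemmas \ref{halfspace} and \ref{closenesstohalfspace}, and your dilation trick to pass from $Q_{1/2}^-$ to $Q_1^-$ is a legitimate way to match the two statements). One technical slip there: $G$ is \emph{not} bounded on $Q_1^-$ --- $G(0,t)=(-4\pi t)^{-n/2}\to\infty$ as $t\to 0^-$, and $G\notin L^2(Q_1^-)$ for $n\ge 2$, so Cauchy--Schwarz does not upgrade $L^2(-1,0;H^1(B_1))$-convergence to the weighted $L^1$ hypothesis of Lemma \ref{closenesstohalfspace}. This is fixable (e.g.\ uniform local $L^\infty$ bounds from subcaloricity of $|\u_j|$ together with self-similarity near the origin, then split $\{G\le K\}\cup\{G>K\}$ using $G\in L^1$), but as written the reduction is not justified.

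The genuine gap is the case $q>0$, which is where all the work in the proposition lies. What you call ``the hard part'' and defer --- that the kernel of the linearization at $\hat\h$ consists exactly of the tangent directions to $\bH$, with coercivity transverse to it --- \emph{is} the paper's proof: one normalizes $\w_i=(\u_i-\hat\h)/\delta_i$ with $\hat\h$ the best approximation, shows the weak limit $\w$ vanishes on $\{x_n<0\}$ precisely because the factor $\delta_i^{q-1}\to\infty$ there, classifies the $\kp$-backward self-similar solutions of $H\w=f_{\u}(\hat\h)(\w)$ on $\{x_n>0\}$ (a homogeneity/polynomial argument for the Hardy-type potential $\kp(\kp-1)x_n^{-2}$, resp.\ $q\kp(\kp-1)x_n^{-2}$), then kills the surviving tangential modes using the variational inequality \eqref{best-approx} coming from minimality of $\hat\h$, and finally proves strong $H^1$ convergence to contradict $\lVert\w_i\rVert=1$. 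None of this is supplied by your sketch, and the Implicit Function Theorem cannot be invoked off the shelf in its place: $f(\u)=|\u|^{q-1}\u$ is only $C^{0,q}$, the superposition operator is not Fr\'echet differentiable at $\Phi_{\h}$ (which vanishes on a half-space, exactly where $f_{\u}$ is singular/undefined), the formal linearized operator carries the one-sided singular potential $\sim(x_n)_+^{-2}$ and a free-boundary (domain-variation) character, and the required Fredholm/coercivity structure on a Gaussian-weighted space transverse to the symmetry orbit is exactly the unproved content. So the $q>0$ half of the proposal is a plan, not a proof; the elliptic Ornstein--Uhlenbeck reduction is a reasonable reformulation, but to complete it you would have to reproduce, in that setting, the kernel classification and the best-approximation/strong-convergence steps that the paper carries out by hand.
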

\begin{proof}
The proof follows from Lemmas  \ref{halfspace} and \ref{closenesstohalfspace} for $q=0$. When $q>0$, we assume toward a contradiction that there exists a sequence of backward self-similar solutions of degree $\kp$, say $\u_i$, such that 
$$0<\inf_{\h\in\mathbb{H}}\lVert\u_i-\h\rVert_{L^{2}(-1,0;H^1(B_1;\R^m))}=\lVert\u_i-\hat\h\rVert_{L^{2}(-1,0;H^1(B_1;\R^m))}=:\delta_i\rightarrow0,\quad\text{as }i\rightarrow\infty,$$
where $\hat\h=\alpha(x_n^+)^\kp\e_1$. When passing to a subsequence, $(\u_i-\hat\h)/\delta_i=:\w_i\rightharpoonup\w$
weakly in $L^{2}(-1,0;H^1(B_1;\R^m))$, the limit $\w$ is still a backward self-similar function of degree $\kp$.

Furthermore, for $\phi\in C_0^\infty(Q_1^-;\R^m)$ we have
\begin{align*}
\int_{-1}^0\int_{B_1}-\nabla\w_i:\nabla\phi+\w_i\cdot\partial_t\phi\,dxdt=&\frac1{\delta_i}\int_{-1}^0\int_{B_1}\left(f(\u_i)-f(\hat\h)\right)\cdot\phi\,dxdt\\
=&\frac1{\delta_i}\int_{-1}^0\int_{B_1}\int_0^1\frac d{d\tau}f(\hat\h+\tau(\u_i-\hat\h))\cdot\phi\,d\tau dxdt\\
=&\int_{-1}^0\int_{B_1}\int_0^1f_{\u}(\hat\h+\tau\delta_i\w_i)(\w_i)\cdot\phi\,d\tau dxdt
\end{align*}
If ${\rm supp}\,\phi\subset B_1^-\times(-1,0)$, we conclude that
\begin{align*}
\int_{-1}^0\int_{B_1^-}-\nabla\w_i:\nabla\phi+\w_i\cdot\partial_t\phi\,dxdt=&\int_{-1}^0\int_{B_1^-}\int_0^1f_{\u}(\hat\h+\tau\delta_i\w_i)(\w_i)\cdot\phi\,d\tau dxdt\\
=&\int_{-1}^0\int_{B_1^-}\int_0^1f_{\u}(\tau\delta_i\w_i)(\w_i)\cdot\phi\,d\tau dxdt\\
=&\frac1q\delta_i^{q-1}\int_{-1}^0\int_{B_1^-}f_{\u}(\w_i)(\w_i)\cdot\phi\,dxdt,
\end{align*}
let $i\rightarrow\infty$ to obtain
$$\int_{-1}^0\int_{B_1^-}f_{\u}(\w)(\w)\cdot\phi\,dxdt=0.$$
Then $\w\equiv0$ in $B_1^-\times(-1,0)$. Now for every ${\rm supp}\,\phi\subset B_1^+\times(-1,0)$,
$$
\int_{-1}^0\int_{B_1^+}-\nabla\w:\nabla\phi+\w\cdot\partial_t\phi\,dxdt=\int_{-1}^0\int_{B_1^+}f_{\u}(\hat\h)(\w)\cdot\phi\,dxdt.
$$
Thus $H\w=f_\u(\hat\h)(\w)$ in $B_1^+\times(-1,0)$. Now let $w^j:=\w\cdot\e_j$ for $1\leq j\leq m$, then
$$Hw^j=q\kp(\kp-1)(x_n^+)^{-2}w^j, \quad\text{ for }j=1,$$
and
$$Hw^j=\kp(\kp-1)(x_n^+)^{-2}w^j, \quad\text{ for }j>1.$$
Next extend $w^j$ to a backward self-similar function of degree $\kp$ in $\{x_n<0\}$ and define
$$\tilde w^j(x',x_n,t):=\left\{\begin{array}{ll}w^j(x',x_n,t),&x_n>0,\\[10pt]
-w^j(x',x_n,t),&x_n<0,\end{array}\right.$$
which is a backward self-similar weak solution of degree $\kp$ and satisfies 
\begin{equation}\label{eq-w_j}
H\tilde w^j=\left\{\begin{array}{ll}q\kp(\kp-1)|x_n|^{-2}\tilde w^j,&\text{for }j=1,\\[10pt]
\kp(\kp-1)|x_n|^{-2}\tilde w_j,&\text{for }j>1.\end{array}\right.
\end{equation}
If we consider any multiindex $\mu\in \mathbb{Z}_+^{n-1}\times\{0\}$ and any nonnegative integer $\gamma\in \mathbb{Z}_+$ as well as the higher order partial derivatives $\partial_t^\gamma\partial_x^\mu\tilde w^j=:\zeta$ then $\zeta$ is a backward self-similar function of order $\kp-|\mu|_1-2\gamma$ and satisfies again in the same equation in $\R^n\times(-\infty,0)$.
From the integrability and homogeneity we infer that $\partial_t^\gamma\partial_x^\mu\tilde w^j\equiv0$ for $\kp-|\mu|_1-2\gamma+1\leq -n/2$.
Thus $(x',t)\mapsto \tilde w^j(x',x_n,t)$ is a polynomial and the homogeneity  imply the existence of a polynomial $p$  such that $ w^j(x',x_n,t)=x_n^\kp w^j(\frac{x'}{x_n},1,\frac{t}{x_n^2})=x_n^\kp p(\frac{x'}{x_n},\frac{t}{x_n^2})$ for $x_n>0$. 
Next choose $\gamma$ such that $\partial_t^\gamma p=r(x')\neq0$, then  according to the $H^1$-integrability of $\partial_t^\gamma w^j=x_n^{\kp-2\gamma}r(\frac{x'}{x_n})$ we know that  $\kp-2\gamma-{\rm deg}\,r>\frac12$.
Take the multiindex $\mu\in\mathbb{Z}_+^{n-1}\times\{0\}$ such that $|\mu|_1={\rm deg}\,r$ and $\partial_x^\mu r\neq0$, and insert $\partial_t^\gamma\partial_x^\mu w^j=\partial_x^\mu r x_n^{\kp-2\gamma-|\mu|_1}$ in equation \eqref{eq-w_j}, which implies that
$$\begin{array}{ll}
(\kp-2\gamma-|\mu|_1)(\kp-2\gamma-|\mu|_1-1)=q\kp(\kp-1),&\text{for }j=1,\\[10pt]
(\kp-2\gamma-|\mu|_1)(\kp-2\gamma-|\mu|_1-1)=\kp(\kp-1),&\text{for }j>1,
\end{array}
$$
and hence 
$$\begin{array}{ll}
2\gamma+|\mu|_1=1,\text{ or }2\kp-2,&\text{for }j=1,\\[10pt]
2\gamma+|\mu|_1=0,\text{ or }2\kp-1,&\text{for }j>1.
\end{array}
$$
The condition $\kp-2\gamma-{\rm deg}\,r>\frac12$ and $\kp>2$ yields that the only possible case is $\gamma=0$ and $|\mu|_1=1$ for $j=1$ and $|\mu|_1=0$ for $j>1$.
We obtain that $w^1(x,t)=x_n^\kp(d+\ell\cdot x'/x_n)$ and $w^j(x,t)=\ell_jx_n^\kp$ for $j>1$. Comparing with the equation \eqref{eq-w_j} implies that we must have $d=0$. 
To sum up, we find that $\w(x,t)=(x_n^{\kp-1}\ell_1\cdot x',\ell_2 x_n^\kp,\cdots,\ell_m x_n^\kp)$ for some $\ell_1\in\R^{n-1}$ and $\ell_2,\cdots,\ell_m\in\R$.

Recall that we have chosen $\hat \h$ as the best approximation of $\u_i$ in $\mathbb{H}$.  
So, it follows that for $\h_\nu(x):=\alpha\max(x\cdot\nu,0)^\kp\e_1$,
\begin{equation}\label{best-approx}
(\w_i,\h_\nu-\h)_{L^{2}(-1,0;H^1(B_1;\R^m)))}\leq\frac1{2\delta_i}\lVert\h_\nu-\h\rVert^2_{L^{2}(-1,0;H^1(B_1;\R^m))}.
\end{equation}
Now let $\nu\rightarrow\e_n$, so that $\frac{\nu-\e_n}{|\nu-e_n|}$ converges to the vector $\xi$ (where $\xi\cdot\e_n=0$), then
\begin{align*}
o(1)\geq\int_{-1}^0\int_{B_1}&(\w_i\cdot\e_1)\kp(x_n^+)^{\kp-1}(x\cdot\xi)+\\
&\nabla(\w_i\cdot\e_1)\cdot\left[\kp(x_n^+)^{\kp-1}\xi+\kp(\kp-1)(x_n^+)^{\kp-2}(x\cdot\xi)\e_n\right]\,dxdt.
\end{align*}
Choosing $\xi=(\ell_1,0)$ and passing to the limit in $i$, we obtain that 
\begin{align*}
0\geq\int_{-1}^0\int_{B_1}&\kp(x_n^+)^{2\kp-2}(x'\cdot\ell_1)^2+\kp(x_n^+)^{2\kp-2}|\ell_1|^2\\
&+\kp(\kp-1)^2(x_n^+)^{2\kp-4}(x'\cdot\ell_1)^2\,dxdt.
\end{align*}
Hence, $\ell_1=0$, and then $\w\cdot\e_1=0$.

If we apply once more the relation \eqref{best-approx} for $\h_\theta=\alpha(x_n^+)^\kp\e_t$ istead of $\h_\nu$, where $\e_\theta=(\cos\theta)\e_1\pm(\sin\theta)\e_j$, and let $\theta\rightarrow0$. We obtain
$$(\w_i,\pm\alpha(x_n^+)\kp\e_j)_{W^{1,2}(Q_1^-;\R^m)}\leq0.$$
Therefore,
$$\ell_j\lVert(x_n^+)^\kp\rVert^2_{W^{1,2}(Q_1^-;\R^m)}=0,$$
and then $\ell_j=0$.

So far, we have proved that $\w\equiv0$. 
In order to obtain a contradiction to the assumption $\lVert\w_i\rVert_{L^{2}(-1,0;H^1(B_1;\R^m))}=1$, it is therefore sufficient to show the strong convergence of $\nabla\w_i$ to $\nabla\w$  in $L^{2}(-1,0;L^2(B_1;\R^m))$ as a subsequence $i\rightarrow\infty$. But by compact imbedding on the boundary
\begin{align*}
\int_{-1}^0\int_{B_1}|\nabla\w_i|^2\,dxdt=&\int_{-1}^0\int_{\partial B_1}\w_i\cdot(\nabla\w_i\cdot x)\,d\cH^{n-1}dt-\int_{-1}^0\int_{B_1}\w_i\cdot\Delta\w_i\,dxdt\\
=&\int_{-1}^0\int_{\partial B_1}\kp|\w_i|^2-2t\partial_t\w_i\cdot\w_i\,d\cH^{n-1}dt-\int_{-1}^0\int_{B_1}\w_i\cdot\partial_t\w_i\,dxdt\\
&-\frac1{\delta_i^2}\int_{-1}^0\int_{B_1}(\u_i-\hat\h)\cdot(f(\u_i)-f(\hat\h))\,dxdt\\
\leq&\int_{-1}^0\int_{\partial B_1}\kp|\w_i|^2-t\partial_t|\w_i|^2\,d\cH^{n-1}dt-\frac12\int_{-1}^0\int_{B_1}\partial_t|\w_i|^2\,dxdt\\
=&\int_{-1}^0\int_{\partial B_1}(\kp+1)|\w_i|^2\,d\cH^{n-1}dt-\int_{\partial B_1}|\w_i(x,-1)|^2\,d\cH^{n-1}\\
&+\frac12\int_{B_1}|\w_i(x,-1)|^2-|\w_i(x,0)|^2\,dx\\
\leq&\int_{-1}^0\int_{\partial B_1}(\kp+1)|\w_i|^2\,d\cH^{n-1}dt+\frac12\int_{B_1}|\w_i(x,-1)|^2\,dx\\
=&\int_{-1}^0\int_{\partial B_1}(\kp+1)|\w_i|^2\,d\cH^{n-1}dt+\frac1{n+2\kp+2}\int_{-1}^0\int_{B_1}|\w_i|^2dxdt\rightarrow0,
\end{align*}
as a subsequence $i\rightarrow\infty$. (Note that we have used the homogeneity property of $\w_i$ in the last line.)
\end{proof}

\begin{definition}(Regular points)
We say that a point $ z=(x,t) \in \Gamma^\kp(\emph{\u})$ is a regular\footnote{
They are also called low-energy points. When $q=0$, these points have the lowest energy (see Theorem \ref{lowenergy}). It is not generally true for $q>0$ and we have the lowest energy when the coincidence set has nonempty interior (see Theorem  \ref{lower-energy-for-q}). In this case, half-space solutions have the energy $\M(\h)=\frac{\alpha^{1+q}}{\kp(\kp-1)}\frac{4^\kp-1}{\sqrt\pi}2^{2\kp-3}\Gamma(\kp-\frac12)$, where $\Gamma$ is the Gamma function here. 
The time-dependent global solution $\theta(x,t)=(\frac{-2t}\kp)^{\kp/2}\e$ has the energy $\M(\theta)=\frac{2^{\kp-1}}{\kp^\kp(\kp-1)}(4^\kp-1)$ which is the less than the energy of half-space solutions for $\kp\geq5/2$.
} 
free boundary point for $\emph{\u}$ if 
at least one blowup limit of $\emph{\u}$ at $z$ belongs to $\bH$. We denote by $\mathcal{R}$ the set of all regular free boundary points in  $\Gamma(\emph{\u})$. 
\end{definition}

\begin{proposition}\label{Regular-well-define}
If $z_0=(x_0,t_0)\in\mathcal{R}$, then all blowup limits of $\emph{\u}$ at $z_0$ belong to $\bH$.
\end{proposition}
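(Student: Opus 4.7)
The plan is threefold: invoke the monotonicity formula to make blowups well-defined and fix their energy, apply the optimal growth bound from Theorem~\ref{thm:growth} to make the rescaling flow $r\mapsto \u_{r,z_0}$ into a continuous precompact curve, and then combine the classical fact that $\omega$-limit sets of such curves are connected with the isolation result of Proposition~\ref{H-isolated}.

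First I would fix $z_0=(x_0,t_0)\in\mathcal R\subset\Gamma^\kp(\u)$. By Theorem~\ref{cutoff} and its corollary, the limit $\W(\u,0+;z_0)$ exists, every accumulation point of $\u_{r,z_0}$ as $r\to 0^+$ is a $\kp$-backward self-similar solution, and every such accumulation point $\u_0$ satisfies $\W(\u_0,1)=\W(\u,0+;z_0)$. Since by assumption at least one blowup belongs to $\bH$, and $\W(\h,1)=A_q$ is constant on $\bH$, we have $\W(\u,0+;z_0)=A_q$, so the common energy of all blowups is pinned.

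Next, the growth estimate of Theorem~\ref{thm:growth} gives $|\u_{r,z_0}|\leq C$ on $Q_R^-$ uniformly in small $r$ for every fixed $R$. Since $\u_{r,z_0}$ solves \eqref{system}, the right-hand side $H\u_{r,z_0}$ is uniformly bounded as well, and interior parabolic regularity yields uniform $C^{1,\beta}_x\cap C^{0,(1+\beta)/2}_t$ estimates (or better for $q>0$). This makes $\{\u_{r,z_0}:0<r<r_0\}$ precompact in $L^2(-1,0;H^1(B_1;\R^m))$, while the continuity of $\u$ in $(x,t)$ makes the map $r\mapsto \u_{r,z_0}$ continuous on $(0,r_0)$ in the same topology. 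Let $\Lambda$ denote the $\omega$-limit set of this curve as $r\to 0^+$; a standard topological lemma (the $\omega$-limit of a continuous precompact curve in a Hausdorff space is compact and connected) ensures $\Lambda$ is compact and connected in $L^2(-1,0;H^1(B_1;\R^m))$.

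Finally, let $\mathcal S$ be the set of all $\kp$-backward self-similar solutions in $L^2(-1,0;H^1(B_1;\R^m))$. By step one, $\Lambda\subset\mathcal S$, and by hypothesis $\Lambda\cap\bH\neq\emptyset$. Proposition~\ref{H-isolated} says exactly that no element of $\bH$ is the limit of elements of $\mathcal S\setminus\bH$, i.e., $\bH$ is open in $\mathcal S$; it is also closed in $\mathcal S$, being the continuous image of the compact parameter space $S^{n-1}\times S^{m-1}$. Hence $\bH$ is clopen in $\mathcal S$, and the connected set $\Lambda\subset\mathcal S$ that meets $\bH$ must lie entirely inside $\bH$. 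This is the desired conclusion.

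The main technical step is verifying precompactness and continuity in precisely the $L^2(-1,0;H^1(B_1;\R^m))$ topology used in Proposition~\ref{H-isolated}; this rests squarely on the uniform $L^\infty$ bound supplied by Theorem~\ref{thm:growth}, without which the blowup family would not form a bounded set. Everything else is a packaging of monotonicity, isolation, and the connectedness of $\omega$-limit sets.
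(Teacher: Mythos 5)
Your proposal is correct and is essentially the paper's own argument in abstract form: the paper proves connectedness of the $\omega$-limit set concretely, by interleaving the two blowup sequences and using continuity of $r\mapsto \dist(\u_{r,z_0},\bH)$ to extract intermediate scales $\tau_i$ whose limit is a backward self-similar solution at small positive distance from $\bH$, contradicting Proposition \ref{H-isolated}. Both routes rest on exactly the same ingredients (compactness of the rescalings via Theorem \ref{thm:growth}, self-similarity of blowups from the monotonicity formula, and the isolation property), so this is the same approach, merely packaged through the clopen-plus-connectedness lemma.
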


\begin{proof}
Suppose there are two sequences $r_i,\rho_i\rightarrow0$ such that the scaling $\u(x_0+r_i\cdot,t_0+r_i^2\cdot)/r_i^\kp$ and $\u(x_0+\rho_i\cdot,t_0+\rho_i^2\cdot)/\rho_i^\kp$ converges respectively to $\u_0\in\bH$ and a $\tilde \u_0\notin\bH$. Furthermore, we can assume that $r_{i+1}<\rho_i<r_i$.
By a continuity argument we can find $\rho_i<\tau_i<r_i$ when $i$ is large enough such that $\textsl{dist}\left(\u(x_0+\tau_i\cdot,t_0+\tau_i^2\cdot)/\tau_i^\kp,\bH\right)=\theta\textsl{dist}(\tilde\u_0,\bH)$ for an arbitrary  $\theta\in(0,1/2)$.
The boundedness of $\u(x_0+\tau_i\cdot,t_0+\tau_i^2\cdot)/\tau_i^\kp$ implies that every limit $\u^*$ of that is  a $\kp$-backward self-similar solution such that $\textsl{dist}(\u^*,\bH)=\theta\textsl{dist}(\tilde\u_0,\bH)$, which  for small $\theta$ contradicts the isolation property Proposition \ref{H-isolated}.
\end{proof}

\begin{remark}
We will conclude later the uniqueness of blowups at regular points of free boundary by Theorem \ref{converge-speed} in Section \ref{sec:regularity}.
\end{remark}

The following  lemma and theorem shows that the half-space solutions have the lowest energy among the global self-similar solutions for the case $q=0$.

\begin{lemma}\label{highenergy}
Let $q=0$ and  $\emph{\u}$ be a backward self-similar solution to the system \eqref{system}, satisfying 
 $|\emph{\u}|>0$ a.e.. Then  
 \begin{equation*}
 \M(\emph{\u})= \int_{-4}^{-1} \int_{\mathbb{R}^n}  |\emph{\u} | Gdx dt\geq \frac{15}{2}.
 \end{equation*}
\end{lemma}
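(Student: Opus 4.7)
The plan is to turn the energy into a single scalar integral, and then to exploit both the Kato-type supersolution property of $|\u|$ and the exact self-similar scaling to pin down the sign of an integrated quantity. First I would derive a Pohozaev-type identity showing that, under the assumption $|\u|>0$ a.e., the Weiss energy collapses to a much simpler form. Testing the equation $\Delta\u-\partial_t\u=\u/|\u|$ against $\u G$ at a fixed time slice and integrating by parts in $x$, while using $\nabla G=\tfrac{x}{2t}G$ and the homogeneity identity
$$x\cdot\nabla|\u|^2+2t\,\partial_t|\u|^2=2\kp|\u|^2=4|\u|^2,$$
I get the slice-wise identity
$$\int_{\R^n}\Bigl(|\nabla\u|^2+\frac{|\u|^2}{t}+|\u|\Bigr)G\,dx=0.$$
Substituting into the definition of $\W(\u,1)$ (with $\kp=2$, $q=0$, so that the bulk integrand is $|\nabla\u|^2+|\u|^2/t+2|\u|$) then yields
$$\M(\u)=\int_{-4}^{-1}\int_{\R^n}|\u|\,G\,dx\,dt.$$

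Next I would set $U:=|\u|$ and invoke Kato's inequality. Where $|\u|>0$,
$$HU=\frac{\u\cdot H\u}{|\u|}+\frac{|\nabla\u|^2-|\nabla U|^2}{|\u|}=1+\frac{|\nabla\u|^2-|\nabla U|^2}{|\u|}\geq 1,$$
by Cauchy--Schwarz $|\nabla U|^2\leq|\nabla\u|^2$. Hence $Z:=U-(-t)$ is a backward self-similar function of degree $\kp=2$ satisfying $HZ\geq 0$ a.e. Define
$$H(t):=\int_{\R^n}Z(x,t)\,G(x,t)\,dx.$$
Integrating by parts in $x$ (legal because $Z$ has polynomial growth and $G$ has Gaussian decay) and using that the backward heat kernel satisfies $\Delta G+\partial_tG=0$, one obtains the key identity
$$\int_{\R^n}HZ\cdot G\,dx=\int_{\R^n}Z\,\Delta G\,dx-\int_{\R^n}\partial_t Z\cdot G\,dx=-\int_{\R^n}Z\,\partial_tG\,dx-\int_{\R^n}\partial_tZ\cdot G\,dx=-H'(t),$$
so that $H'(t)\leq 0$.

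On the other hand, a change of variables $y=x/\sqrt{-t}$ combined with the self-similarity $Z(x,t)=(-t)Z(x/\sqrt{-t},-1)$ and the scaling of the Gaussian $G(x,t)\,dx=G(y,-1)\,dy$ yields exactly
$$H(t)=(-t)\,H(-1),$$
so $H'(t)=-H(-1)$ is constant. Combined with $H'(t)\leq 0$ this forces $H(-1)\geq 0$. Finally, since $\int_{\R^n}U(x,t)G(x,t)\,dx=H(t)+(-t)$, we conclude
$$\M(\u)=\int_{-4}^{-1}\bigl(H(t)+(-t)\bigr)dt=\bigl(H(-1)+1\bigr)\int_{-4}^{-1}(-t)\,dt=\frac{15}{2}\bigl(H(-1)+1\bigr)\geq\frac{15}{2}.$$
The main obstacle is the first step: recognizing that the full Weiss energy reduces to a single Gaussian integral of $|\u|$; once that is available, the subcaloric/self-similar interplay determining the sign of $H(-1)$ is essentially forced, and one recovers the extremal value $15/2$ realized by the purely time-dependent solution $\theta(x,t)=(-t)\,\e$.
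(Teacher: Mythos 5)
Your proof is correct and takes essentially the same route as the paper: the same reduction $\M(\u)=\iint |\u|\,G\,dx\,dt$ via $2$-homogeneity and integration by parts, the same inequality $H|\u|\ge 1$ (valid a.e.\ since $|\u|>0$ a.e.), and the same Gaussian-weighted integration by parts based on $\Delta G+\partial_t G=0$ combined with self-similarity to reduce everything to a single-time Gaussian moment of $|\u|$. Your bookkeeping through $Z=|\u|-(-t)$ and the monotone-versus-linear function $H(t)$ is just the differential form of the paper's computation of the boundary terms at $t=-4$ and $t=-1$, so no genuinely new ingredient is involved.
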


\begin{proof}
Observe that  by homogeneity of $\u$ we have  $\M(\u)=\W(\u, r)$, for any $r $. 
 Integration by parts, and (again) homogeneity of $\u$ implies 
\begin{equation*}
\M(\u)=\int_{-4}^{-1} \int_{\mathbb{R}^n}  \left(   | \nabla \u |^2 + \frac{|\u|^2}{t } + 2 |\u | \right) Gdx dt=
\int_{-4}^{-1} \int_{\mathbb{R}^n}  |\u | Gdx dt.
\end{equation*}
Let $U=|\u|>0$ a.e., as we observed  before,
$ \Delta U-\partial_t U\geq 1$.
Hence 
\begin{equation}\label{henergy}
\begin{aligned}
3 =&\int_{-4}^{-1} \int_{\mathbb{R}^n} Gdx dt \leq \int_{-4}^{-1} \int_{\mathbb{R}^n}  ( \Delta U-\partial_t U)Gdx dt \\
=& \int_{-4}^{-1} \int_{\mathbb{R}^n} -  U \Delta G+U\partial_t  G-\partial_t (UG)dx dt  =- \int_{\mathbb{R}^n}  U(x,t) G(x,t)dx |_{t=-4}^{t=-1} \\
 =&-\pi^{-\frac{n}{2}}\int_{\mathbb{R}^n}  U(\sqrt{-4t} z,t) e^{-|z|^2}dz |_{t=-4}^{t=-1} =+4\pi^{-\frac{n}{2}}\int_{\mathbb{R}^n} t U( z,-1/4) e^{-|z|^2}dz |_{t=-4}^{t=-1} \\
 =& 12\pi^{-\frac{n}{2}}\int_{\mathbb{R}^n} U( z,-1/4) e^{-|z|^2}dz. 
\end{aligned}
\end{equation}
Employing again the homogeneity of $U$, we obtain 
\begin{equation*}
\begin{aligned}
\M(\u)= \int_{-4}^{-1} \int_{\mathbb{R}^n} U(x,t) G(x,t)dx dt=&-4\pi^{-\frac{n}{2}} \int_{-4}^{-1} \int_{\mathbb{R}^n} t  U( z,-1/4) e^{-|z|^2} dz dt \\
=&30 \pi^{-\frac{n}{2}} \int_{\mathbb{R}^n}  U( z,-1/4) e^{-|z|^2} dz \geq \frac{15}{2},
\end{aligned}
\end{equation*}
where we used \eqref{henergy} in the last step.
\end{proof}

\begin{theorem}\label{lowenergy}
Let  $q=0$ and $\emph{\u}$ be a backward self-similar solution to the system \eqref{system}.
Then $\M(\emph{\u}) \geq \frac{15}{4}$ and the equality holds if and only if $ \emph{\u} \in \mathbb{H}$.
\end{theorem}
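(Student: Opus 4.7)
The strategy is to exploit the identity $\M(\u)=\int_{-4}^{-1}\int_{\R^n}|\u|\,G\,dx\,dt$ established in the proof of Lemma~\ref{highenergy}; this identity is valid for every backward self-similar solution of degree $\kp=2$ regardless of the size of its zero set. Specializing to $\h=\frac{1}{2}(x_n^+)^2\e_1\in\bH$, a direct Gaussian computation yields $\int_{\R^n}(x_n^+)^2 G(x,t)\,dx=-t$, so $\M(\h)=\frac{1}{2}\int_{-4}^{-1}(-t)\,dt=\frac{15}{4}$; this settles the implication $\u\in\bH\Rightarrow\M(\u)=\frac{15}{4}$.

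For the inequality $\M(\u)\geq\frac{15}{4}$ the key input is a Kato-type bound: combining $H|\u|^2=2|\nabla\u|^2+2|\u|\chi_{\{|\u|>0\}}$ with the chain rule $H|\u|^2=2|\u|H|\u|+2|\nabla|\u||^2$ yields $H|\u|\geq\chi_{\{|\u|>0\}}$ distributionally, with equality iff $|\nabla\u|^2=|\nabla|\u||^2$ in $\{|\u|>0\}$. Integrating by parts in time, using $(\Delta+\partial_t)G=0$ and the self-similar scaling $\int_{\R^n}|\u(x,-4)|G(x,-4)\,dx=4\int_{\R^n}|\u(x,-1)|G(x,-1)\,dx$, one obtains $\int_{-4}^{-1}\int H|\u|\,G\,dx\,dt=3\int_{\R^n}|\u(y,-1)|G(y,-1)\,dy$; the same scaling gives $\M(\u)=\frac{15}{2}\int|\u(y,-1)|G(y,-1)\,dy=\frac{5}{2}\int_{-4}^{-1}\int H|\u|\,G\,dx\,dt$. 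Combined with the Kato bound this produces the main estimate
\[
\M(\u)\;\geq\;\tfrac{5}{2}\bigl(3-m(\Lambda)\bigr),\qquad m(\Lambda):=\int_\Lambda G\,dx\,dt,\quad \Lambda:=\{|\u|=0\}\cap(\R^n\times(-4,-1)),
\]
which already gives $\M(\u)\geq\frac{15}{4}$ whenever $m(\Lambda)\leq\frac{3}{2}$.

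The regime $m(\Lambda)>\frac{3}{2}$ must still be excluded for nontrivial self-similar $\u$, and this is the main technical obstacle I foresee. By the parabolic homogeneity $\Lambda_t=\sqrt{-t}\,\Lambda_{-1}$, one has $m(\Lambda)=3\gamma(\Lambda_{-1})$ where $\gamma$ is the standard Gaussian measure on $\R^n$, and the hypothesis becomes $\gamma(\Lambda_{-1})>\frac{1}{2}$. I would combine Proposition~\ref{Nondegeneracy} (which prevents the positivity set from being ``thin'' near its closure) with a Gaussian isoperimetric argument to force $\{|\u(\cdot,-1)|>0\}$ into a one-sided strip $\{x\cdot\nu>-\delta\}$; at that point Lemma~\ref{halfspace} identifies $\u\in\bH$, contradicting $\gamma((\Lambda_\h)_{-1})=\frac{1}{2}$.

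Finally, in the equality case $\M(\u)=\frac{15}{4}$ necessarily $m(\Lambda)=\frac{3}{2}$ and the Kato bound saturates, giving $|\nabla\u|^2=|\nabla|\u||^2$ a.e.\ in $\{|\u|>0\}$; this forces $\u=\c\,|\u|$ on each connected component of the positivity set for a constant unit $\c\in\R^m$ (exactly as in the eigenvalue argument of Lemma~\ref{halfspace}). The scalar $|\u|$ then becomes a nonnegative backward self-similar solution of the scalar parabolic obstacle problem $HU=\chi_{\{U>0\}}$ with $m(\Lambda_{|\u|})=\frac{3}{2}$, and a further application of Lemma~\ref{halfspace} to the auxiliary vector field $|\u|\,\e$ (for any fixed unit vector $\e$) identifies $|\u|=\frac{1}{2}(x\cdot\nu)_+^2$, so $\u\in\bH$ as claimed.
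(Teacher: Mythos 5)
Your reduction via the identity $\M(\u)=\int_{-4}^{-1}\int_{\R^n}|\u|G\,dx\,dt$, the Kato-type bound $H|\u|\geq\chi_{\{|\u|>0\}}$, and the scaling computation is correct as far as it goes, and it does yield $\M(\u)\geq\frac{5}{2}\bigl(3-m(\Lambda)\bigr)$; together with the direct computation $\M(\h)=\frac{15}{4}$ this settles the theorem \emph{only} when the zero set carries at most half of the Gaussian mass, $m(\Lambda)\leq\frac{3}{2}$. The genuine gap is exactly the regime you flag as ``the main technical obstacle'': the claim that $\gamma(\Lambda_{-1})>\frac12$ together with Proposition \ref{Nondegeneracy} and ``a Gaussian isoperimetric argument'' forces $\{|\u(\cdot,-1)|>0\}$ into a one-sided strip $\{x\cdot\nu>-\delta\}$ is not a proof and, as stated, does not work. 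Gaussian isoperimetry controls perimeter, not containment: a set of Gaussian measure below $\frac12$ need not lie in any strip near a half-space (it can have pieces on both sides of every hyperplane), and nondegeneracy only rules out slow decay of $|\u|$ near free boundary points, not such global geometric configurations. So the hypothesis of Lemma \ref{halfspace}, which is a containment of the positivity set in $\{x_n>-\delta\}$, is never verified, and the case $m(\Lambda)>\frac32$ remains open in your argument. The same unverified hypothesis is invoked again in your equality case, where you apply Lemma \ref{halfspace} to $|\u|\,\e$ without knowing that the positivity set of $|\u|$ lies on one side of a hyperplane.

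For comparison, the paper's proof avoids any measure-theoretic estimate of the zero set: it splits into $|\u|>0$ a.e.\ (then $\M\geq\frac{15}{2}$ by Lemma \ref{highenergy}, which is the same computation underlying your identity) versus $|\{|\u|=0\}|>0$; in the latter case nondegeneracy and the quadratic decay give a cylinder $Q_r^-(Y)\subset\{\u\equiv0\}$, one blows up at a free boundary point $Z\in\partial_pQ_r^-(Y)$, where the blown-up positivity set automatically lies on one side of a hyperplane so that Lemma \ref{halfspace} \emph{does} apply, and the upper semicontinuity of $\W(\u,0+;\cdot)$ yields the contradiction with $\M(\u)<\frac{15}{4}$. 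The equality case is then handled by the chain $\W(\u,0+;X_0)=\W(\u,+\infty;X_0)=\M(\u)$, which forces self-similarity about $X_0$ and translation invariance, again reducing to Lemma \ref{halfspace}. To repair your proposal you would need to replace the isoperimetric sketch by an argument of this blow-up type (or an actual classification of self-similar solutions with large zero set), at which point you are essentially reproducing the paper's Step 2; the Kato estimate alone cannot close either the inequality or the equality case.
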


\begin{proof}
\textit{\textbf{Step 1:}} We show that $\M(\u) = \frac{15}{4}$ if $\u$ is a half-space solution.
If $\u \in \mathbb{H}$, then 
\begin{equation*}
\M(\u)= 
\int_{-4}^{-1} \int_{\mathbb{R}^n}  |\u | Gdx dt=\frac{1}{2}\int_{-4}^{-1} -t dt=\frac{15}{4}.
\end{equation*}

\bigskip

\textit{\textbf{Step 2:}} If $U=|\u|>0$ a.e., then  $\M(\u) \geq  \frac{15}{2}$ by Lemma \ref{highenergy}.
Suppose that  $|\{U=0\}|>0$ and $\M(\u)<\frac{15}{4}$. 
By the nondegeneracy and quadratic decay estimates, we imply that the interior of $\{\u\equiv0\}$ is nonempty. 
Then we may choose $Q_r^-(Y)\subset \{\u\equiv 0\}$ in such a way that there exists a point $Z\in \partial_pQ_r^-(Y)\cap\Gamma(\u)$.
Moreover, since $\u$ is backward self-similar we can assume that the point $Z$ is very close to the origin and satisfies $\mathbb{W}(\u, 0+; Z)<15/4$, by the upper semicontinuity of the balanced energy. 
Hence  any blow-up at $Z$ is a half-space solution by Lemma \ref{halfspace} which contradicts the result in Step 1.

\bigskip

\textit{\textbf{Step 3:}} It remains to show that  if $\M(\u) = \frac{15}{4}$ for a backward self-similar solution $\u$, then  $\u$ is a halfspace solution. 
Let $X_0=(x_0,t_0)\in \Gamma$ be as in \textit{Step 2}, i.e.  such that $ Q_{r_0}^-(Y)\subset \{\u\equiv 0\}$, for a small $r>0$.  If $ X_0=0$, then $ \u \in \bH$ by Lemma \ref{halfspace}.
Assume that $| X_0|>0$. Let $ \u_0$ be a blow-up of $\u$ at $X_0$, then $ \u_0 \in \bH$.  Hence
\begin{equation}\label{homwrtz}
\frac{15}{4}=\M(\u_0)=\W(\u, 0+; X_0)\leq \W(\u, +\infty;X_0)=\W(\u ,+\infty;0)=\M(\u)=\frac{15}{4},
\end{equation}
since $\W(\u, +\infty,X)$ does not depend on $ X\in \Gamma$.
Indeed, by homogeneity of $\u $ at the origin, 
$$ \frac{\u(rx+x_0, r^2t+t_0)}{r^2}=\u(x+\frac{x_0}{r},t+\frac{t_0}{r^2}) \rightarrow \u(x,t), \textrm{ as } r\rightarrow +\infty.$$

\par
Therefore \eqref{homwrtz} implies that $ \W(\u,r;X_0)$ does not depend on $ r $ and $ \u$ is backward self-similar with centre at $X_0$, hence
$\u(x_0+rx, t_0+r^2 t)=r^2 \u(x_0+x,t_0+t)$. On the other hand, $\u(x_0+rx,t_0+r^2t)=r^2\u(x_0/r+x,t_0/r^2+t)$, and therefore
$\u(x_0/r+x,t_0/r^2+t)= \u(x_0+x,t_0+t)$. Letting $r\rightarrow +\infty$, we obtain $\u(x,t)= \u(x_0+x,t_0+t)$, for any $(x,t)$, hence 
$\u$ satisfies the assumptions in Lemma \ref{halfspace}, and $\u\in \bH$.
\end{proof}

The next theorem is a version of Theorem \ref{lowenergy} for case $q>0$ to show that half-space solutions have the lowest energy among the backward self-similar solutions whose coincidence set has nonempty interior.
\begin{theorem}\label{lower-energy-for-q}
Let $\emph{\u}$ be a backward self-similar solution to the system \eqref{system} 
satisfying $ \{|\emph{\u}|=0\}^\circ\neq\emptyset$. Then $\M(\emph{\u}) \geq A_q$, and equality implies that $\emph{\u}$ is a half-space solution; here $A_q=\M(\emph{\h})$ for every $\emph{\h}\in\bH$. 
\end{theorem}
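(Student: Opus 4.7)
The plan is to follow the three-step structure of Theorem \ref{lowenergy} and adapt the classification of blow-ups to $q>0$. Step 1, the equality $\M(\h)=A_q$ for $\h\in\bH$, is immediate from the definition of $A_q$.

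For the inequality in Step 2, I would assume $\u\not\equiv 0$ and $\{|\u|=0\}^\circ\neq\emptyset$, and pick $Y$, $r>0$ with $Q_r^-(Y)\subset\{|\u|\equiv 0\}$ and $Z=(z_0,\tau_0)\in\partial_p Q_r^-(Y)\cap\Gamma(\u)$. By the $\kp$-self-similarity of $\u$ at the origin, a direct computation gives $\u_{s,Z_\rho}=\u_{s/\rho,Z}$ for every $s,\rho>0$ and $Z_\rho:=(\rho z_0,\rho^2\tau_0)$, so every blow-up of $\u$ at $Z_\rho$ is already a blow-up at $Z$, and therefore $\W(\u,0+;Z_\rho)=\W(\u,0+;Z)$ is independent of $\rho$. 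Letting $\rho\to 0$ and invoking the upper semicontinuity of $X\mapsto\W(\u,0+;X)$ then yields $\W(\u,0+;Z)\leq\W(\u,0+;0)=\M(\u)$. The inequality $\M(\u)\geq A_q$ thus reduces to showing that every blow-up at $Z$ lies in $\bH$.

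This classification is the key technical step. The blow-up $\u_0$ is a $\kp$-backward self-similar global solution whose coincidence set contains $\{x_n\leq 0\}$ (after rotation), obtained as the limit of the cylinder $Q_r^-(Y)$ under the rescaling at $Z$. For $q=0$ this is exactly Lemma \ref{halfspace}. For $q>0$ I would adapt the same eigenvalue strategy: writing $\Psi(\xi):=\u_0(\xi,-1)$, each component $\Psi^k$ satisfies the linear eigenvalue problem
\begin{equation*}
\L_q \Psi^k := \Bigl(-\Delta + \tfrac{\xi\cdot\nabla}{2} + |\Psi|^{q-1}\Bigr)\Psi^k = \tfrac{\kp}{2}\,\Psi^k \quad \text{on } \{\xi_n>0\},
\end{equation*}
with Dirichlet data. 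Since $|\Psi|^{q-1}\geq 0$ and each $\Psi^k$ has constant sign on each connected component of its positivity set, a Krein--Rutman/Perron--Frobenius argument identifies $\kp/2$ as the principal Dirichlet eigenvalue and forces the $\Psi^k$ to be pairwise proportional. The problem then reduces to the scalar semilinear equation for $|\Psi|$ on $\{\xi_n>0\}$ with Dirichlet data, whose unique nontrivial $\kp$-homogeneous solution is $\alpha(\xi_n^+)^\kp$, yielding $\u_0\in\bH$ and completing the inequality.

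For the equality case in Step 3, if $\M(\u)=A_q$ then $\W(\u,0+;Z)=\M(\u)$, and self-similarity of $\u$ at the origin also gives $\W(\u,r;Z)\to\M(\u)$ as $r\to\infty$ via $\u_{r,Z}(x,t)=\u(x+z_0/r,t+\tau_0/r^2)\to\u(x,t)$. Monotonicity (Theorem \ref{monotonicity}) then forces $r\mapsto\W(\u,r;Z)$ to be constant, so $\u$ is $\kp$-backward self-similar around $Z$ as well as around the origin. The translation argument from Step 3 of Theorem \ref{lowenergy}, combining the two self-similarities, yields $\u(x,t)=\u(x+z_0,t+\tau_0)$, and together with the half-space structure of the coincidence set this forces $\u\in\bH$.

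The main obstacle is the classification step for $q>0$: the spectral gap $\lambda_2(-\Delta+x\cdot\nabla,\R^n_+)>2$ that drove Lemma \ref{halfspace} degrades as $q\to 1$ (since $\kp\to\infty$), so the purely linear comparison no longer closes the argument. One must instead exploit the positivity of both the potential $|\Psi|^{q-1}$ and the principal eigenfunction in a Krein--Rutman framework, and then verify uniqueness of the reduced scalar self-similar Dirichlet problem on the half-space.
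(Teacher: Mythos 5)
Your first reduction is essentially the paper's: the scaling identity $\u_{s,Z_\rho}=\u_{s/\rho,Z}$ and the upper semicontinuity of $X\mapsto\W(\u,0+;X)$ are exactly how the paper compares the energy at $X_0$ with $\M(\u)$ (via the points $X_0^m=(x_0/m,t_0/m^2)$); the paper additionally verifies, by iterating Lemma \ref{holder-regularity}, that the chosen boundary point lies in $\Gamma^\kp$, so that the $\kp$-rescaled blow-ups are well defined and nondegenerate — a point you pass over. The genuine gap is the classification step for $q>0$, which you flag as the main obstacle but leave as a sketch that would not close as stated. The components $\Psi^k=u_0^k(\cdot,-1)$ of a blow-up have no a priori sign, so they are not principal eigenfunctions of $\L_q$, and a Krein--Rutman argument alone cannot identify $\kp/2$ as the principal Dirichlet eigenvalue; the $q=0$ proof of Lemma \ref{halfspace} works precisely because of the explicit spectral gap $\lambda_2(\L_0,\R^n_+)=3>2$ available when $\kp=2$, and you offer no substitute for $q>0$, nor a uniqueness proof for the reduced scalar half-space problem. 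There is also a geometric issue: if $Z$ sits on the bottom of the cylinder $Q_r^-(Y)$, the rescaled coincidence set does not converge to a spatial half-space for $t<0$, so even the hypothesis feeding your eigenvalue argument is not guaranteed. Finally, your equality case needs $\W(\u,0+;Z)=A_q$, i.e. that blow-ups at $Z$ are half-space solutions, so it too hinges on the missing classification (and risks circularity if one tries to use the inequality part instead).

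The paper closes exactly this gap by a different route. It shows that every blow-up $\u_0$ at such a point is time-independent: differentiating the self-similarity relation of $\u$ about the origin and passing to the limit in the rescalings gives $t_0\,\partial_t\u_0=0$ with $t_0<0$. Hence $\u_0$ is a $\kp$-homogeneous solution of the elliptic system $\Delta\u=f(\u)$; a change of variables converts $\M(\u_0)$ into a constant multiple of the elliptic adjusted energy, and Proposition 4.6 of \cite{FSW20} then supplies both the lower bound and the identification of equality at the blow-up level — no parabolic half-space classification for $q>0$ is ever needed. For the rigidity statement the paper also does not use your translation argument from Step 3 of Theorem \ref{lowenergy}; instead it combines the fact that all blow-ups at the points $X_0^m$ are half-space solutions with the isolation property of $\bH$ (Proposition \ref{H-isolated}) and a continuity/squeezing argument along a sequence $\rho_m\to0$ to force $\u\in\bH$. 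Unless you supply a proof of the $q>0$ classification of backward self-similar solutions vanishing on a half-space (or adopt the paper's time-independence reduction), your argument does not go through.
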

\begin{proof}
The proof is indirect. Consider the self-similar solution $\u$ with $\M(\u)<A_q$.
Assume that $\{|\u|=0\}$ contains the cube $Q$ and  $X_0=(x_0,t_0)\in \partial Q\cap\partial\{|\u|>0\}$ and $t_0<0$. 
From here we deduce that all derivatives of $\u$ at $X_0$ vanish if they exist. If we start with the initial regularity and the estimate $|\u(X)| \leq C|X-X_0|$, we are able to apply  Lemma \ref{holder-regularity} iteratively and obtain  $|\u(X)| \leq C_\epsilon|X-X_0|^{\kp-\epsilon}$. This  implies that $X_0\in\Gamma^\kp$.
Also from self-similarity of $\u$, we infer that
$$
\W(\u,0+;X_0)=\lim_{r\rightarrow0^+}\W(\u,r;X_0)=\lim_{r\rightarrow0^+}\W(\u,\frac rm;X_0^m)
=\W(\u,0+;X_0^m),
$$
where $X_0^m:=(\frac{x_0}m,\frac{t_0}{m^2})$.
By the upper semicontinuity of the function $X\mapsto\W(\u,0+;X)$, we get
$$
\W(\u,0+;X_0)=\limsup_{m\rightarrow\infty}\W(\u,0+;X_0^m)\leq \W(\u,0+;0)\leq \W(\u,1;0)=\M(\u)<A_q.
$$
Thus every blow-up limit $\u_0$ of $\u$ at the point $X_0$ satisfies the inequality $\M(\u_0)<A_q$. Note that by the nondegeneracy property $\u_0\not\equiv0$. Now the self-similarity of $\u$ tells us that $\u_0$ must be time-independent. 
To see that let $\u_r(x,t):=\u(x_0+rx,t_0+r^2t)/r^\kp$ which converges to $\u_0$ in some sequence. 
According to the self-similarity of $\u$, we have
$$
\nabla\u(x_0+rx,t_0+r^2t)\cdot (x_0+rx)+2(t_0+r^2t)\partial_t\u(x_0+rx,t_0+r^2t)=\kp\u(x_0+rx,t_0+r^2t),
$$
so,
$$
r\nabla\u_r(x,t)\cdot (x_0+rx)+2(t_0+r^2t)\partial_t\u_r(x,t)=r^2\kp\u_r(x,t),
$$
and passing to the limit, we obtain $t_0\partial_t\u_0(x,t)=0$. Therefore, $\u_0$ is a $\kp$-homogeneous global solution of $\Delta\u=f(\u)$ and violates Proposition 4.6 in \cite{FSW20}, the elliptic version of this theorem.  To find the elliptic energy of  $\u_0$, we can write for every  $\kp$-homogeneous time-independent solution $\v$,
\begin{align*}
\M(\v)=&\frac{1-q}{1+q}\int_{-4}^{-1}\int_{\R^n}|\v(x)|^{1+q}G(x,t)dxdt=\frac{1-q}{1+q}\pi^{-\frac n2}\int_{-4}^{-1}\int_{\R^n}|\v(\sqrt{-4t}z)|^{1+q}e^{-|z|^2}dzdt\\
=&\frac{1-q}{1+q}\frac{4^{2\kp}-4^\kp}{4\kp}\pi^{-\frac n2}\int_{\R^n}|\v(z)|^{1+q}e^{-|z|^2}dz\\
=&\frac{1-q}{1+q}\frac{4^{2\kp}-4^\kp}{4\kp}\pi^{-\frac n2}\int_0^\infty\int_{\partial B_1}|\v(\hat z)|^{1+q}r^{\kp(1+q)+n-1}e^{-r^2}d\hat zdr\\
=&\frac{1-q}{1+q}c_{q,n}\int_{\partial B_1}|\v(\hat z)|^{1+q}d\hat z\\
=&\frac{1-q}{1+q}\tilde c_{q,n}\int_{B_1}|\v(z)|^{1+q}dz= \tilde c_{q,n} M(\v),
\end{align*}
where $M(\v)$ is the adjusted energy for the elliptic case which is used in Proposition 4.6 in \cite{FSW20}. In particular, for  $\h\in\bH$, we find out $M(\u_0)<M(\h)$. 

Finally, to prove the second part of the statement we consider the backward self-similar solution $\u$ satisfying $\M(\u)=A_q$, $Q\subset\{|u|=0\}$ and $X_0=(x_0,t_0)\in\partial Q\cap\partial\{|\u|>0\}$ for some $t_0<0$.
As in the first part of the proof we obtain that every blow-up limit $\u_0$ of $\u$ at the point $X_0$ satisfies the inequality $\M(\u_0)\leq A_q$, that $\u_0$ is a $\kp$-homogeneous time-independent solution and $\{|\u_0|=0\}^\circ\neq\emptyset$.
Thus according to Proposition 4.6 in \cite{FSW20}, $\u_0$ must be a half-space solution. Therefore, every blow-up limit of $\u$ at the point $X_0^m=(\frac{x_0}m,\frac{t_0}{m^2})$ must be a  half-space solution. Assuming $\u\notin\bH$, we find by a continuity argument for an arbitrary $\theta\in(0,1)$ a sequence $\rho_m\rightarrow0$ such that
$$
\textsl{dist}(\rho_m^{-\kp}\u(x_0/m+\rho_m\cdot, t_0/m^2+\rho_m^2\cdot),\bH)=\theta\textsl{dist}(\u,\bH)>0.
$$
It follows that $\u(x_0/m+\rho_m\cdot, t_0/m^2+\rho_m^2\cdot)/\rho_m^\kp$ converges to a backward self-similar solution $\u^*$ along a subsequence as $m\rightarrow\infty$, because
$$
\W(\u^*,r;0)=\lim_{m\rightarrow\infty}\W(\u,r\rho_m;X_0^m)\geq \W(\u,0+,X_0^m)=A_q,
$$
and for every $0<\rho$
$$
\W(\u^*,r,0)=\lim_{m\rightarrow\infty}\W(\u,r\rho_m;X_0^m)\leq \lim_{m\rightarrow\infty}\W(\u,\rho\,;X_0^m)= \W(\u,\rho\,;0).
$$
Then $\W(\u^*,r;0)=A_q$ for all $r>0$ and $\u^*$ must be a self-similar solution.
The conclusion is that $\textsl{dist}(\u^*,\bH)=\theta\textsl{dist}(\u,\bH)$ which for small $\theta$ contradicts the isolation property Proposition  \ref{H-isolated}.
\end{proof}

Here, we show that the regular points are an open set in $\Gamma(\u)=\partial\{|\u|>0\}$.

\begin{proposition}\label{R-points}
The regular set $\mathcal{R}$  is open relative to $\Gamma(\emph{\u})$.
\end{proposition}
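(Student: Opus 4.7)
The plan is to argue by contradiction. Suppose we have a sequence $z_k\in\Gamma(\u)\setminus\mathcal{R}$ with $z_k\to z_0\in\mathcal{R}$. First, one verifies that $z_k\in\Gamma^\kp(\u)$ for large $k$: since the possible vanishing orders at free boundary points lie in the discrete set $\{1,2,\ldots,\lfloor\kp\rfloor,\kp\}$ and $\partial_t^i\partial_x^\mu \u(z_k)\to \partial_t^i\partial_x^\mu \u(z_0)=0$ for $2i+|\mu|<\kp$ by parabolic regularity, the vanishing order at $z_k$ must equal $\kp$ for $k$ large, so $\W(\u,0+;z_k)$ is well-defined. Since $z_0\in\mathcal{R}$, Proposition \ref{Regular-well-define} yields that every blow-up of $\u$ at $z_0$ lies in $\bH$; in particular $\W(\u,0+;z_0)=A_q$. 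The upper semi-continuity of $X\mapsto\W(\u,0+;X)$ then gives $\limsup_{k\to\infty}\W(\u,0+;z_k)\leq A_q$. For each $k$, fix a blow-up $\u_k^*$ of $\u$ at $z_k$; since $z_k\notin\mathcal{R}$ we have $\u_k^*\notin\bH$, whence Proposition \ref{H-isolated} gives $\mathrm{dist}(\u_k^*,\bH)\geq\delta_0>0$ in the $L^2(-1,0;H^1(B_1;\R^m))$ topology.

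Next I would extract a limit from $\{\u_k^*\}$. Each $\u_k^*$ is a $\kp$-backward self-similar solution of \eqref{system}, and Theorem \ref{thm:growth} supplies the uniform growth bound $\sup_{Q_R^-}|\u_k^*|\leq CR^\kp$ for every $R\geq 1$, independent of $k$. Parabolic regularity and Arzel\`a--Ascoli then yield a subsequence converging strongly in $L^2_{\mathrm{loc}}((-\infty,0);H^1_{\mathrm{loc}}(\R^n;\R^m))$ and locally uniformly to a $\kp$-backward self-similar limit $\u^*$. Nondegeneracy (Proposition \ref{Nondegeneracy}) guarantees $\u^*\not\equiv 0$, and the continuity of the distance to $\bH$ under the above convergence yields $\mathrm{dist}(\u^*,\bH)\geq\delta_0$, so $\u^*\notin\bH$. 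Continuity of $\M$ under the convergence, together with $\M(\u_k^*)=\W(\u,0+;z_k)$ and the upper semi-continuity above, gives $\M(\u^*)=\lim_k\M(\u_k^*)\leq A_q$.

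For $q=0$ the contradiction is immediate: Theorem \ref{lowenergy} provides the universal lower bound $\M(\u^*)\geq A_0=A_q$, forcing $\M(\u^*)=A_q$, and the equality case of the same theorem then places $\u^*\in\bH$, contradicting $\mathrm{dist}(\u^*,\bH)\geq\delta_0$.

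For $q>0$ the additional ingredient needed is to verify that $\{|\u^*|=0\}^\circ\neq\emptyset$, so that Theorem \ref{lower-energy-for-q} can be used in place of Theorem \ref{lowenergy}. This is the main obstacle. One transfers the half-space dead-core from near $z_0$: by an analogue of Lemma \ref{closenesstohalfspace} based on nondegeneracy (which is available for $q>0$ as well), the coincidence set $\{|\u|=0\}$ contains, at every sufficiently small scale around $z_0$, an open half-space wedge of definite opening. By choosing the blow-up scales $\rho_j^k$ defining $\u_k^*$ comparable to $|z_k-z_0|$ (in particular, not much smaller), this wedge stays visible in the rescaled profile $\u_{\rho_j^k,z_k}$, survives the passage to $\u_k^*$, and then persists in the strong limit $\u^*$. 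Once $\{|\u^*|=0\}^\circ\neq\emptyset$ is established, Theorem \ref{lower-energy-for-q} forces $\M(\u^*)=A_q$ and $\u^*\in\bH$, which again contradicts $\mathrm{dist}(\u^*,\bH)\geq\delta_0$. The delicate book-keeping of the two limits $z_k\to z_0$ and $\rho_j^k\to 0$ so that the dead-core wedge survives is the technical heart of the $q>0$ case.
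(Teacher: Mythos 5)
There is a genuine gap, in fact two. First, your reduction to $z_k\in\Gamma^\kp(\u)$ does not work: from $\partial_t^i\partial_x^\mu\u(z_k)\to\partial_t^i\partial_x^\mu\u(z_0)=0$ you can only conclude that these derivatives are \emph{small} at $z_k$, not that they \emph{vanish} there, and membership in $\Gamma^\kp$ (equivalently, vanishing order exactly $\kp$) requires exact vanishing. Nearby non-regular free boundary points could a priori have vanishing order $s<\kp$ with a small but nonzero caloric polynomial of degree $s$; ruling this out is part of what Proposition \ref{R-points} is supposed to establish, so it cannot be assumed. Without $z_k\in\Gamma^\kp$ the $\kp$-homogeneous blow-ups $\u_k^*$ you work with need not exist, the identity $\M(\u_k^*)=\W(\u,0+;z_k)$ is unavailable, and the isolation bound from Proposition \ref{H-isolated} (which is only for $\kp$-backward self-similar solutions) cannot be invoked.

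Second, and decisively, the $q>0$ case as you set it up cannot be repaired by ``choosing the blow-up scales comparable to $|z_k-z_0|$'': a blow-up at $z_k$ is by definition a limit along scales tending to $0$ for \emph{fixed} $k$, so at those scales the dead core sitting near $z_0$ is pushed to spatial infinity and there is no reason that $\{|\u^*|=0\}$ has nonempty interior; if instead you rescale at scales comparable to $|z_k-z_0|$ you are no longer looking at blow-ups, the limits need not be self-similar, and both $\M(\u_k^*)=\W(\u,0+;z_k)$ and the isolation argument collapse. Since for $q>0$ there is no universal energy lower bound among self-similar solutions (cf.\ the time-dependent solution $\theta(x,t)=(-2t/\kp)^{\kp/2}\e$ mentioned in the paper, whose energy is below $A_q$ for $\kp\ge 5/2$), Theorem \ref{lower-energy-for-q} is your only tool and it genuinely needs the dead core. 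The paper's proof resolves exactly this tension by \emph{not} using blow-ups at the points $X_i$: it rescales $\u$ at the points $X_i$ at intermediate scales $\tau_i$, chosen by an intermediate-value argument so that $\dist(\u_{\tau_i,X_i},\bH)$ equals a fixed small constant in $L^\infty(Q_1^-)$; this closeness to a half-space combined with nondegeneracy forces a definite dead core already in $\u_{\tau_i,X_i}$, hence in the limit $\u^*$, and then a Weiss-energy argument at a free boundary point $Y_0$ of $\u^*$ (using monotonicity, upper semicontinuity, Theorems \ref{lowenergy} and \ref{lower-energy-for-q}, and the uniqueness of the forward problem, Lemma \ref{uniqueness-forward}) shows $\u^*$ is a half-space solution through the origin, contradicting the fixed positive distance. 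Your outline for $q=0$ is close in spirit to a workable energy argument, but as written both the $\Gamma^\kp$ step and the $q>0$ dead-core step are unproved, and the latter is the technical heart of the proposition.
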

\begin{proof}
Assume that there is a sequence $X_i=(x_i,t_i)\in \Gamma(\u)\setminus\mathcal{R}$ converging to $X_0=(x_0,t_0)\in \mathcal{R}$. 
We can find a sequence $\tau_i\rightarrow0$ and a subsequence of $X_i$ such that\footnote{The distance ranges between almost zero to infinity, depending on $\tau_i$.}
\begin{equation}\label{openness-rel2}
\textsl{dist}\left(\u(x_i+\tau_i\cdot\,,t_i+\tau_i^2\cdot)/\tau_i^\kp,\bH\right)=\frac{c}{2^{2\kp+1}},
\end{equation}
 where $c$ is the constant defined in Proposition \ref{Nondegeneracy} and the distance is measured in $L^\infty(Q_1^-)$.
The uniform boundedness of set $\bH$ implies the convergence $\u(x_i+\tau_i\cdot\,,t_i+\tau_i^2\cdot)/\tau_i^\kp$ in a subsequence to a global solution $\u^*$. 
For convenience assume that $\lVert\u_{\tau_i,X_i}-\h\rVert_{L^\infty(Q_1^-)}\leq c/4^\kp$ for $\h(x,t)=\alpha(x^1_+)^\kp$.
Then 
\begin{equation}\label{openness-rel1}
|\u_{\tau_i,X_i}(x,t)|\leq \frac c{4^\kp}
\end{equation}
 for all $(x,t)\in Q_1^-$ where $x_1\leq0$.  
According to the nondegeneracy property, Proposition \ref{Nondegeneracy}, we know that
$\sup_{Q_{r}^-(Z)}|\u_{\tau_i,X_i}|\geq cr^\kp$  for all $Z\in\overline{\{|\u_{\tau_i,X_i}|>0\}}$ such that $Q_{r}^-(Z)\subseteq Q_1^-$.
Comparing with \eqref{openness-rel1} for $r>1/4$, we deduce that $\u_{\tau_i,X_i}\equiv0$ in $\{(x,t)\in Q_{1/2}^-: x_1\leq -1/4\}$.
Therefore, the coincidence set $\{|\u^*|=0\}$ has a nonempty interior and there exists cube $Q\subseteq\{|\u^*|=0\}$ and $Y_0\in \partial Q\cap\partial\{|\u^*|>0\}$. According to Theorem \ref{lowenergy} and Theorem \ref{lower-energy-for-q}, $Y_0$ is a regular point for $\u^*$ provided its energy is not larger than $A_q$. To see this, we  fix  $r\leq1$ and consider the energy value  
\begin{align*}
  \W(\u^*,0+;Y_0)\leq \W(\u^*,r;Y_0)&=\lim_{i\rightarrow\infty}\W(\u,r\tau_i;X_i+\tau_iY_0)  
\leq \lim_{i\rightarrow\infty}\W(\u,\rho;X_i+\tau_iY_0)
=\W(\u,\rho;X_0),
\end{align*}
where $\rho>0$ is again an arbitrary constant. Then $\W(\u^*,0+;Y_0) \leq \W(\u,0+;X_0) \leq A_q$ and $Y_0$ is a regular point. 
So, $\W(\u^*,0+;Y_0)= A_q$, and $\W(\u^*,0+;Y_0)=\W(\u^*,r;Y_0)=A_q$ for every $r\leq1$.
Therefore, $\u^*$ is self-similar in $Q_1^-$ with respect to the point $Y_0=(y_0,s_0)$.
Now apply again Theorem \ref{lowenergy} and Theorem \ref{lower-energy-for-q}  to find out $\u^*$ is a half-space solution with respect to $Y_0$, say $\u^*(x,t)=\alpha ((x^1-y_0^1)_+)^\kp$ for $t\leq s_0$. The uniqueness of solution of \eqref{system} (Lemma \ref{uniqueness-forward}) yields that the equality holds for $t\leq0$.
Notice that $|\u^*(0,0)|=0$ since $X_i\in\Gamma(\u)$. Therefore, $y_0^1=0$ and $\u^*(x,t)=\alpha (x^1_+)^\kp$, which contradicts \eqref{openness-rel2}.
\end{proof}


\section{H\"older regularity of  $\partial_t \u$}\label{sec:Holder}

Our way of approach, as mentioned in the introduction, is to use elliptic regularity theory for the free boundary problems. This approach is based on using the epiperimetric inequality for the elliptic systems as done in \cite{asuw15, FSW20}. The reduction of parabolic problem to the elliptic case was successfully used in \cite{W2000}. 
The idea is that near  regular points of the free boundary, where the blow-up regime is half-space,  the time derivative of the solutions vanishes faster than the order of scaling which is $\kp=2/(1-q)$.  This enables us to apply the epiperimetric inequality.

Our strategy is to  prove that $\partial_t\u$ is subcaloric and vanishes continuously on the free boundary (when $q=0$).  So we can deduce the H\"older regularity for it. This method needs a modification for $q>0$.
We start by following lemma which is essential in the case $q>0$.

\begin{lemma}\label{cauchy-inverse-estimate}
Let $(x_0,t_0)\in \Gamma^\kp$ be a regular  free boundary point of $\emph{\u}$. Then for every $\epsilon>0$, there exists $r_0>0$ such that 
\begin{equation}\label{lem5.1-rel1}
 |\emph{\u}|^2|\nabla\emph{\u}|^2\leq (1+\epsilon)\left|\sum_{k=1}^m u^k\nabla  u^k\right|^2,\quad\text{in } Q_{r_0}(x_0,t_0).
 \end{equation} 
\end{lemma}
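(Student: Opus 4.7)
The first step is algebraic: writing $\u = |\u|\hat\u$ with $\hat\u := \u/|\u|$ on the positivity set, a direct computation using $\sum_k u^k\nabla u^k = |\u|\nabla|\u|$ and $\nabla u^k = \hat u^k\nabla|\u| + |\u|\nabla\hat u^k$ yields
\[
|\u|^2|\nabla\u|^2 - \Bigl|\sum_{k=1}^m u^k\nabla u^k\Bigr|^2 = |\u|^4|\nabla\hat\u|^2 \quad \text{on } \{|\u|>0\}.
\]
Thus the target inequality is equivalent to the ``near-radiality'' estimate $|\u|^2|\nabla\hat\u|^2 \leq \epsilon\,|\nabla|\u||^2$ on $\{|\u|>0\}\cap Q_{r_0}(x_0,t_0)$, which is trivially satisfied (with vanishing left-hand side) by every half-space solution $\h=\alpha(x\cdot\nu)_+^\kp\e$, since $\hat\h\equiv\e$ is constant on the positivity side while $|\nabla|\h||>0$ there. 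Both sides of the inequality scale as $\rho^{2\kp-2}$ under the parabolic rescaling $\u\mapsto\u_\rho$.

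I would argue by contradiction. If the statement fails, there exist $\epsilon_0>0$ and $(y_j,s_j)\to(x_0,t_0)$ in $\{|\u|>0\}$ at which the inequality is violated. Choose $(y_j',s_j')\in\Gamma(\u)$ a parabolically nearest free-boundary point to $(y_j,s_j)$, and let $\rho_j$ be the associated distance; then $\rho_j\to 0$, $(y_j',s_j')\to(x_0,t_0)$, and by Proposition~\ref{R-points} every $(y_j',s_j')$ is a regular point (hence in $\Gamma^\kp$) for large $j$. Rescale
\[
\tilde\u_j(x,t) := \rho_j^{-\kp}\,\u\bigl(y_j'+\rho_jx,\, s_j'+\rho_j^2t\bigr),
\]
so the transformed ``bad'' point $(\zeta_j,\theta_j):=((y_j-y_j')/\rho_j,(s_j-s_j')/\rho_j^2)$ has parabolic norm $1$ and, by the minimality of $\rho_j$, the cylinder $Q_1(\zeta_j,\theta_j)$ sits inside $\{|\tilde\u_j|>0\}$. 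Theorem~\ref{thm:growth} gives $|\tilde\u_j|\leq C(|x|^2+|t|)^{\kp/2}$ locally uniformly, so parabolic regularity yields a subsequential $C^1_{\mathrm{loc}}$-limit $\tilde\u_0$ with $(\zeta_j,\theta_j)\to(\zeta^*,\theta^*)$ and $|\zeta^*|^2+|\theta^*|=1$.

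The central step is to identify $\tilde\u_0 \in \bH$. From $\W(\tilde\u_j,r;0)=\W(\u,\rho_jr;(y_j',s_j'))$, Theorem~\ref{monotonicity}, upper semicontinuity of $\W(\u,0+;\cdot)$, continuity of $\W(\u,\delta;\cdot)$ in the base point, and the fact that $\W(\u,0+;(y_j',s_j')) = \W(\u,0+;(x_0,t_0)) = A_q$ (both base point and limit are regular), one squeezes $\W(\tilde\u_j,r;0)\to A_q$ for each $r>0$; hence $\W(\tilde\u_0,\cdot;0)\equiv A_q$ and Theorem~\ref{monotonicity} forces $\tilde\u_0$ to be $\kp$-backward self-similar. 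The low-energy classification (Theorem~\ref{lowenergy} for $q=0$; Theorem~\ref{lower-energy-for-q} with Proposition~\ref{H-isolated} for $q>0$, ruling out the time-dependent global solution $\theta(x,t)=(-2t/\kp)^{\kp/2}\e$ because $\{t>0\}\subset\{|\theta|=0\}^\circ$ and $\theta\notin\bH$ give $\M(\theta)>A_q$) then yields $\tilde\u_0 = \alpha(x\cdot\nu)_+^\kp\e$. Passing the inclusion $Q_1(\zeta_j,\theta_j)\subset\{|\tilde\u_j|>0\}$ to the limit produces $\overline{Q_1(\zeta^*,\theta^*)}\subset\{x\cdot\nu\geq 0\}\times\R$, which together with $|\zeta^*|\leq 1$ pins $\zeta^*=\nu$. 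Consequently $(\zeta^*,\theta^*)$ lies in the interior of the positivity set of $\tilde\u_0$, and local $C^1$-convergence lets me pass the scale-invariant failing inequality $|\nabla\tilde\u_j|^2 > (1+\epsilon_0)|\nabla|\tilde\u_j||^2$ to the limit, giving $\alpha^2\kp^2 \geq (1+\epsilon_0)\alpha^2\kp^2$, the desired contradiction. The main obstacle is precisely this identification of $\tilde\u_0$: since blow-up uniqueness at $(x_0,t_0)$ is established only later in the paper (see the Remark after Proposition~\ref{Regular-well-define}), one cannot simply invoke it, and the energy-compactness route above --- balancing upper semicontinuity of $\W(\u,0+;\cdot)$ against base-point continuity of the finite-scale energy --- is the crucial technical ingredient.
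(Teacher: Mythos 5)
Your overall strategy is essentially the paper's: argue by contradiction, rescale at nearby free boundary points at the scale of the distance to the free boundary, get compactness from Theorem \ref{thm:growth}, identify the limit as a half-space solution through constancy of the balanced energy at the level $A_q$ (your squeeze, based on almost-monotonicity, base-point continuity of $\W(\cdot,\delta;\cdot)$ and the openness of the regular set, is in substance the paper's Dini-type argument with the cutoff energy $\W(\u\eta,\cdot;\cdot)$), and then contradict the scale-invariant pointwise inequality. The genuine gap is in the identification step for $q>0$: knowing that $\tilde\u_0$ is $\kp$-backward self-similar with $\M(\tilde\u_0)=A_q$ does \emph{not} place it in $\bH$, because Theorem \ref{lower-energy-for-q} applies only to self-similar solutions whose coincidence set has nonempty interior. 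For $q>0$ half-space solutions are not energy minimizers among all self-similar solutions: the paper's footnote records $\M(\theta)<A_q$ for $\kp\geq 5/2$ for $\theta(x,t)=(-2t/\kp)^{\kp/2}\e$, which is the opposite of your claim $\M(\theta)>A_q$, and in any case excluding this single competitor would not exhaust the class of self-similar solutions with energy $A_q$ and empty-interior zero set. The paper closes exactly this hole by first showing that the zero set of the limit has nonempty interior, using that the rescaling centers are regular free boundary points (Proposition \ref{R-points}), so that closeness to a half-space profile together with nondegeneracy forces an open zero region that survives in the limit, and only then invoking Theorem \ref{lower-energy-for-q}. Proposition \ref{H-isolated} alone cannot substitute for this: isolation helps only after you already know the limit is near $\bH$.

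A second omission concerns positive times. The bad points $(y_j,s_j)$ need not lie at earlier times than their nearest free boundary points, so the limit point $(\zeta^*,\theta^*)$ may have $\theta^*>0$; the energy/self-similarity argument identifies $\tilde\u_0$ with a half-space solution only for $t\leq 0$, so you can neither pin $\zeta^*=\nu$ nor pass the failing inequality to the limit at $(\zeta^*,\theta^*)$ without first extending the half-space representation forward in time. The paper does this explicitly via Lemma \ref{uniqueness-forward}; your proposal skips it. Two smaller points: with the parabolic distance $|X|=(|x|^2+|t|)^{1/2}$ only the parabolic ball of radius one around $(\zeta_j,\theta_j)$, not the full cylinder $Q_1(\zeta_j,\theta_j)$, lies in $\{|\tilde\u_j|>0\}$ (harmless, but the stated inclusion is not what minimality of $\rho_j$ gives), and the final contradiction should read $\alpha^4\kp^2\geq(1+\epsilon_0)\alpha^4\kp^2$ rather than $\alpha^2\kp^2\geq(1+\epsilon_0)\alpha^2\kp^2$.
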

\begin{proof}
By contradiction consider the sequence  $(x_j,t_j)\rightarrow(x_0,t_0)$ at which inequality \eqref{lem5.1-rel1} does not hold. 
Let $d_j:=\sup \{r:Q_r^-(x_j,t_j)\subset \{|\u|>0\} \}$ and $(y_j,s_j)\in \partial_pQ_{d_j}^-(x_j,t_j) \cap \Gamma(\u)$. 
According to the openness of regular points, see Proposition \ref{R-points}, we imply that $(y_j,s_j)$ are regular points of free boundary. 
Now, employing the growth estimates of solutions near $\Gamma^\kp$, Theorem \ref{thm:growth} (as well as Remark \ref{forward-regularity}), and possibly passing to a subsequence, we may assume that 
\begin{equation*}
\frac{\u(d_j x +x_j, d_j^2 t+t_j)}{d_j^\kp}:=\u_j(x,t)\rightarrow \u_0(x,t),
\end{equation*}
and 
\begin{equation*}
((y_j-x_j)/d_j, (s_j-t_j)/d_j^2):=(\tilde{y}_j, \tilde{s}_j)\rightarrow (\tilde{y}_0, \tilde{s}_0)\in \partial_p Q_1^-.
\end{equation*}
Therefore, inequality \eqref{lem5.1-rel1} can not be true for $\u_j$ at point $(0,0)$. We will show that $\u_0$ is a half-space solution with respect to $(\tilde{y}_0, \tilde{s}_0)$, i.e.  
\begin{equation}\label{lem5.1-rel2}
\u_0(x,t)=\alpha((x-\tilde{y}_0)\cdot \nu)_+^\kp\e, \textrm{ in } \R^n\times (-\infty, \tilde{s}_0],
\end{equation}
for some $\nu\in\R^n$, $\e\in\R^m$. By the uniqueness of forward problem, Lemma \ref{uniqueness-forward}, the representation \eqref{lem5.1-rel2} is valid for $t\in(-\infty,0]$ and $\u_0$ must satisfy the equality  $ |\u_0|^2|\nabla\u_0|^2=\left|\sum_{k=1}^m u_0^k\nabla  u_0^k\right|^2$. The contradiction proves the lemma.

In order to show that $\u_0$ is a half-space solution,  let $ \varrho>0$ be a small number, 
such that $ Q_\varrho (x_0,t_0) \cap \Gamma$ consists only of regular points; see Proposition \ref{R-points}.
For every $ r>0$ denote by 
\begin{equation*}
w_r(X):=\W(\u \eta,r; X).
\end{equation*}
Then $w_r$ is continuous, and has a pointwise limit,  as $r \rightarrow 0$. 
Since $ Q_\varrho (x_0,t_0) \cap \Gamma$ consists only of regular points, then 
\begin{equation}\label{lem5.1-wrconv}
\lim_{r\rightarrow 0+} w_r(x,t) =A_q, ~\textrm{ for } (x,t) \in Q_\varrho (x_0,t_0) \cap \Gamma.
\end{equation}
 Furthermore, by  monotonicity formula  $ w_r(x,t)+F(r)$ is a nondecreasing function in $r$,  hence by Dini's monotone convergence theorem, the convergence in \eqref{lem5.1-wrconv}
is uniform. Thus
\begin{equation*}
 \W( \u_0, r; \tilde{y}_0, \tilde{s}_0)=\lim_{j \rightarrow \infty} \W( \u_j \eta, r; \tilde{y}_j, \tilde{s}_j)
  = \lim_{j \rightarrow \infty} \W(\u \eta, d_jr; y_j, {s}_j)=A_q,
 \end{equation*}
 for any $r>0$.  
Hence $\u_0$ is backward self-similar with respect to $ ( \tilde{y}_0, \tilde{s}_0)$.
To finish the argument, note that $(\tilde y_j,\tilde s_j)$ is a regular point of $\u_j$ and consider the convergence $\u_j\rightarrow\u_0$ in $Q_2^-$. Then the interior of $\{\u_0=0\}$ is not empty and by Theorem \ref{lower-energy-for-q} we infer that $\u_0$ must be a half-space solution with respect to the point $(\tilde{y}_0, \tilde{s}_0)$.
\end{proof}

\begin{lemma} \label{dtsubcaloric}
Let 
\begin{equation*}
g(x,t):=|\partial_t \emph{\u} (x,t)|^2|\emph{\u} (x,t)|^{-2q},
\end{equation*}
\begin{enumerate}[(i)]
\item If $q=0$, then $ g(x,t)=|\partial_t \emph{\u} (x,t)|^2$ is a subcaloric function in the set $\{ |\emph{\u}| >0\}$.

\item If $0<q<1$ and $(x_0,t_0)\in \Gamma^\kp$ is a regular free boundary point. Then there exists $0<r_0$ and $\theta\geq 2$ such that $g^{\theta} $ is a subcaloric function in the set $\{ |\emph{\u}| >0\}\cap Q_{r_0}(x_0,t_0)$.
\end{enumerate}
\end{lemma}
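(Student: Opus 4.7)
The plan is to differentiate the system $H\u=|\u|^{q-1}\u$ with respect to $t$ and combine the resulting identity for $\w:=\partial_t\u$ with a product-rule identity for $H=\Delta-\partial_t$.

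\emph{Part (i), $q=0$.} In the open set $\{|\u|>0\}$, differentiating the equation in $t$ gives
$$H\w=\frac{\w}{|\u|}-\frac{\u\,(\u\cdot\w)}{|\u|^3}.$$
Pairing with $\w$ and using Cauchy--Schwarz, $(\u\cdot\w)^2\le|\u|^2|\w|^2$, yields $\w\cdot H\w\ge 0$. Since $H|\w|^2=2\w\cdot H\w+2|\nabla\w|^2$, we conclude $H|\w|^2\ge 2|\nabla\w|^2\ge 0$, proving (i).

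\emph{Part (ii), $0<q<1$.} Set $U=|\u|$, $\e=\u/|\u|$. The same differentiation gives $H\w=U^{q-1}\w+(q-1)U^{q-3}(\u\cdot\w)\u$; since $q-1\le 0$, Cauchy--Schwarz yields $\w\cdot H\w\ge qU^{q-1}|\w|^2$, whence $H|\w|^2\ge 2qU^{q-1}|\w|^2+2|\nabla\w|^2$. Writing $\u=U\e$ with $|\e|=1$ and projecting $H\u=U^q\e$ onto $\e$ (using $\e\cdot\partial_i\e=0$ and $\e\cdot H\e=-|\nabla\e|^2$) produces the identity $HU=U^q+U|\nabla\e|^2$. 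Applying the product rule $Hg=U^{-2q}H|\w|^2+|\w|^2H(U^{-2q})+2\nabla|\w|^2\cdot\nabla U^{-2q}$ to $g=|\w|^2U^{-2q}$, the $\pm 2qU^{-q-1}|\w|^2$ terms cancel, and Cauchy--Schwarz on the cross term $(\w\cdot\nabla\w)\cdot\nabla U$ gives
\begin{align*}
Hg &\ge 2U^{-2q}|\nabla\w|^2+2q(2q+1)U^{-2q-2}|\w|^2|\nabla U|^2 \\
&\quad -2qU^{-2q}|\w|^2|\nabla\e|^2-8qU^{-2q-1}|\w|\,|\nabla\w|\,|\nabla U|.
\end{align*}
Lemma \ref{cauchy-inverse-estimate} at the regular point $(x_0,t_0)$ gives, for any preassigned $\epsilon>0$ and some $r_0>0$, the pointwise bound $|\nabla\u|^2\le(1+\epsilon)|\nabla U|^2$ on $Q_{r_0}(x_0,t_0)$; since $|\nabla\u|^2=|\nabla U|^2+U^2|\nabla\e|^2$, this is equivalent to $U^2|\nabla\e|^2\le\epsilon|\nabla U|^2$. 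Setting $X:=U^{-q}|\nabla\w|$ and $Y:=U^{-q-1}|\w|\,|\nabla U|$ we therefore obtain
$$Hg\ge 2X^2-8qXY+2q(2q+1-\epsilon)Y^2=2(X-2qY)^2+2q(1-2q-\epsilon)Y^2.$$
When $q<1/2$ (and $\epsilon<1-2q$) the right side is already nonnegative, so $g$ itself is subcaloric in $Q_{r_0}$ and any $\theta\ge 2$ works.

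For $q\ge 1/2$ the coefficient $2q(1-2q-\epsilon)$ is nonpositive and the deficit must be absorbed by raising $g$ to a power. From the identity
$$H(g^\theta)=\theta g^{\theta-2}\bigl(gHg+(\theta-1)|\nabla g|^2\bigr),$$
it suffices to exhibit $\theta\ge 2$ with $gHg+(\theta-1)|\nabla g|^2\ge 0$. Expanding $\nabla g=2U^{-2q}(\w\cdot\nabla\w)-2qU^{-2q-1}|\w|^2\nabla U$ and using the inequality $|A-B|^2\ge(1-\delta)|B|^2-(\delta^{-1}-1)|A|^2$ (valid for $\delta\in(0,1)$) together with $|\w\cdot\nabla\w|\le|\w|\,|\nabla\w|$ gives
$$\frac{|\nabla g|^2}{g}\ge 4q^2(1-\delta)Y^2-4(\delta^{-1}-1)X^2,$$
so that $gHg+(\theta-1)|\nabla g|^2\ge 0$ reduces to the nonnegativity of a quadratic form in $(X,Y)$. \textbf{The main obstacle} is the joint choice of $\delta\in(0,1)$ and $\theta\ge 2$: the parameter $\delta$ must be kept close enough to $1$ for the $X^2$-coefficient of the combined form to remain nonnegative, yet the gain $4q^2(1-\delta)(\theta-1)Y^2$ in the $Y^2$-coefficient must still outweigh the shortfall $2q(2q-1+\epsilon)$ coming from $Hg$; balancing these two requirements explicitly (with $\theta$ allowed to grow with $q$ and $1/(1-q)$) produces an admissible $\theta\ge 2$ and completes the proof.
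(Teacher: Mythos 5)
Part (i) of your proposal and your derivation of the lower bound for $Hg$ in part (ii) are correct and essentially the paper's computation in different bookkeeping (your identity $HU=U^{q}+U|\nabla(\u/U)|^{2}$ together with Lemma \ref{cauchy-inverse-estimate} is equivalent to the paper's expansion of $H(|\u|^{-2q})$), and for $q<1/2$ your conclusion is complete. The genuine gap is the final balancing step, which you flag as ``the main obstacle'' and assert can be carried out: with the scalar bounds you have set up it cannot, for any $q\geq 1/2$. Indeed, after your two reductions the desired inequality $gHg+(\theta-1)|\nabla g|^{2}\geq0$ has been replaced by the nonnegativity, for all $X,Y\geq0$, of
\begin{equation*}
\Bigl(2-4(\theta-1)(\delta^{-1}-1)\Bigr)X^{2}-8qXY+\Bigl(2q(2q+1-\epsilon)+4q^{2}(1-\delta)(\theta-1)\Bigr)Y^{2},
\end{equation*}
which requires the product of the two diagonal coefficients to be at least $16q^{2}$. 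Writing $m=(1-\delta)(\theta-1)$, the $X^{2}$-coefficient is at most $2-4m$ (the limit $\delta\to1$, $\theta\to\infty$ with $m$ fixed), so the product is at most $\phi(m):=(2-4m)\bigl(2q(2q+1-\epsilon)+4q^{2}m\bigr)$; since $\phi'(m)=-8q(q+1-\epsilon)-32q^{2}m<0$, the supremum over $m\geq0$ is $\phi(0)=4q(2q+1-\epsilon)$, and $4q(2q+1-\epsilon)\geq16q^{2}$ forces $q\leq(1-\epsilon)/2<1/2$. Hence no choice of $\theta\geq2$, $\delta\in(0,1)$, $\epsilon>0$ closes your argument in the regime $q\geq1/2$, which is exactly where the lemma is delicate.

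The loss is caused by the scalarization: in the cross term of $Hg$ you bound $|P|\leq|\partial_t\u|\,|\nabla\partial_t\u|$, where $P:=\sum_{k}\partial_t u^{k}\nabla\partial_t u^{k}$, while the good term you retain is $|\nabla\partial_t\u|^{2}$, and you use the same upper bound on $|P|$ once more when bounding $|\nabla g|^{2}$ from below; the three occurrences of $P$ are thereby decoupled and the cancellation is destroyed. The paper keeps the vectors $P$ and $S:=\sum_{k}u^{k}\nabla u^{k}$ throughout: it applies Cauchy--Schwarz in the opposite direction, $|\partial_t\u|^{2}\sum_{k}|\nabla\partial_t u^{k}|^{2}\geq|P|^{2}$, so that the good term, the cross term of $Hg$, and $\nabla g=2|\u|^{-2q}P-2q|\partial_t\u|^{2}|\u|^{-2q-2}S$ all involve the same two vectors $A:=|\u|^{-2q}P$ and $B:=q|\partial_t\u|^{2}|\u|^{-2q-2}S$. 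Then $gHg+(\theta-1)|\nabla g|^{2}\geq0$ reduces to the nonnegativity of $(2\theta-1)|A|^{2}-4\theta\,A\cdot B+\bigl(\tfrac{1-\epsilon}{q}+2\theta\bigr)|B|^{2}$, which holds precisely when $2\theta q\leq(2\theta-1)(1-\epsilon)$ and is therefore achievable for every $q<1$ by taking $2\theta>1/(1-q)$ and $\epsilon$ small. To repair your proof for $q\geq1/2$, retain $2U^{-2q}|P|^{2}/|\partial_t\u|^{2}$ (rather than $2U^{-2q}|\nabla\partial_t\u|^{2}$) as the good term and keep the exact cross term $P\cdot S$; your $(X,Y)$-form then becomes the paper's two-vector form and the balancing goes through.
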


\begin{proof}
$(i)$ By direct calculations;
\begin{equation*}
\Delta |\partial_t \u |^2 =\sum_{k=1}^m \Delta (\partial_t u^k)^2=2\sum_{k=1}^m \partial_t u^k \Delta \partial_t u^k+2 \sum_{k=1}^m |\nabla \partial_t u^k |^2
\end{equation*}
and 
\begin{equation*}
\partial_t  |\partial_t \u |^2=2 \sum_{k=1}^m \partial_t u^k \partial^2_{tt}u^k.
\end{equation*}
Hence calculating and using  the Cauchy-Schwarz inequality, we obtain
\begin{equation*}
\begin{aligned}
H( |\partial_t \u |^2) =&2\sum_{k=1}^m\partial_t  u^kH( \partial_t u^k)  +2 \sum_{k=1}^m |\nabla \partial_t u^k|^2=
2\sum_{k=1}^m \partial_t u^k \frac{\partial}{\partial t}\left(\frac{u^k}{|\u|‌^{1-q}}\right) +2 \sum_{k=1}^m |\nabla \partial_t u^k|^2 \\
=&2\sum_{k=1}^m \left(  \frac{(\partial_t u^k)^2}{| \u |^{1-q}}- (1-q)\frac{u^k \partial_t u^k\sum_{j=1}^m u^j \partial_t u^j}{|\u|^{3-q}} \right)+2 \sum_{k=1}^m |\nabla \partial_t u^k |^2 \\
=&\frac{2}{|\u|^{3-q}} \left( |\u |^2 |\partial_t \u|^2 -(1-q)(\u \cdot \partial_t \u )^2\right)+ 2 \sum_{k=1}^m |\nabla \partial_t u^k|^2 \geq 0.
\end{aligned}
\end{equation*}

$(ii)$ Since 
$H(g^\theta)=\theta g^{\theta-2}(gHg+(\theta-1)|\nabla g|^2)$, 
it is enough to show that 
\begin{equation}\label{5.2-eq1}
gHg+(\theta-1)|\nabla g|^2\geq 0.
\end{equation}
Note that this relation is valid for $\theta\geq2$ in $\{ |\u| >0\}$ regardless of whether    $\partial_t\u$ vanishes or not. 
We can write
\begin{align*}
Hg=|\u|^{-2q}H(|\partial_t\u|^2)+|\partial_t\u|^2 H(|\u|^{-2q})+2\nabla(|\partial_t\u|^2)\cdot\nabla(|\u|^{-2q}).
\end{align*}
From part $(i)$, we know that 
\begin{equation*}
H(|\partial_t\u|^2)=\frac{2}{|\u|^{3-q}} \left( |\u |^2 |\partial_t \u|^2 -(1-q)(\u \cdot \partial_t \u )^2\right)+ 2 \sum_{k=1}^m |\nabla \partial_t u^k|^2,
\end{equation*}
and by a direct calculation we obtain,
\begin{align*}
H(|\u|^{-2q})=-2q|\u|^{-q-1}-2q|\u|^{-2q-2}|\nabla\u|^2+4q(1+q)|\u|^{-2q-4} \left|\sum_{k=1}^m u^k\nabla  u^k\right|^2.
\end{align*}
Then
\begin{align*}
\frac12 gHg=&(1-q)|\partial_t \u|^2|\u|^{-3q-3}\left( |\u |^2 |\partial_t \u|^2 -(\u \cdot \partial_t \u )^2\right)
-q|\partial_t \u|^4|\u|^{-4q-2}|\nabla\u|^2\\
&+|\partial_t \u|^2|\u|^{-4q}|\nabla\partial_t\u|^2
+2q(1+q)|\partial_t \u|^4|\u|^{-4q-4}\left|\sum_{k=1}^m u^k\nabla  u^k\right|^2\\
&-4q|\partial_t \u|^2|\u|^{-4q-2}\left(\sum_{k=1}^m \partial_tu^k\nabla  \partial_tu^k\right)\cdot\left(\sum_{k=1}^m u^k\nabla  u^k\right).
\end{align*}
According to Lemma \ref{cauchy-inverse-estimate}, we can assume that 
$$ |\u|^2|\nabla\u|^2\leq (1+\epsilon)\left|\sum_{k=1}^m u^k\nabla  u^k\right|^2,$$ 
in a neighborhood of $(x_0,t_0)$ for some $\epsilon>0$ which is determined later.
Therefore, in order to prove \eqref{5.2-eq1} in this neighborhood we have 
\begin{align*}
\frac12 gHg\geq & |\u|^{-4q}\left|\sum_{k=1}^m \partial_tu^k\nabla  \partial_tu^k\right|^2
+q(2q+1-\epsilon)|\partial_t \u|^4|\u|^{-4q-4}\left|\sum_{k=1}^m u^k\nabla  u^k\right|^2\\
&-4q|\partial_t \u|^2|\u|^{-4q-2}\left(\sum_{k=1}^m \partial_tu^k\nabla  \partial_tu^k\right)\cdot\left(\sum_{k=1}^m u^k\nabla  u^k\right)\\
\geq & -2(\theta-1)\left| |\u|^{-2q}\left(\sum_{k=1}^m \partial_tu^k\nabla  \partial_tu^k\right)-q|\partial_t \u|^2 |\u|^{-2q-2} \left(\sum_{k=1}^m u^k\nabla  u^k\right)\right|^2 =- \frac12(\theta-1)|\nabla g|^2
\end{align*}
where the last inequality holds when $2\theta q\leq (2\theta-1)(1-\epsilon)$. We can choose suitable $\epsilon>0$ provided $2\theta>\frac1{1-q}$.
\end{proof}

Now we  prove that the time derivative vanishes continuously 
on the regular part of the  free boundary.

\begin{lemma}\label{dtcontinuous}
Let $g$ be the function defined in Lemma \ref{dtsubcaloric} and suppose $ (x_0,t_0) \in \Gamma^\kp$ is a regular free boundary point, then 
\begin{equation*}
\lim_{(x,t)\rightarrow (x_0,t_0)}g(x,t)=0.
\end{equation*}
\end{lemma}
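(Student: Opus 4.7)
The plan is to argue by contradiction using a carefully chosen blow-up centred not at $(x_0,t_0)$ but at a nearby free boundary point whose distance from the contradiction sequence dictates the rescaling. Suppose the conclusion fails; then there exist $\epsilon>0$ and $X_j=(x_j,t_j)\to(x_0,t_0)$ with $X_j\in\{|\u|>0\}$ and $g(X_j)\geq\epsilon$.

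The first step is to record the essential scaling identity: since $\kp(1-q)=2$, a direct computation gives
\[
 g\bigl(x_0+\rho x,\,t_0+\rho^2 t\bigr)\;=\;|\partial_t\u_{\rho,X_0}(x,t)|^2\,|\u_{\rho,X_0}(x,t)|^{-2q},
\]
so $g$ is invariant under the parabolic scaling $\u\mapsto\u_{\rho,X_0}$. The second, and crucial, observation is the choice of rescaling: let $d_j$ be the parabolic distance from $X_j$ to $\Gamma(\u)$ and pick $\bar X_j\in\Gamma(\u)$ realising it. Set
\[
 \u_j(x,t):=d_j^{-\kp}\u(\bar x_j+d_j x,\bar t_j+d_j^2 t),\qquad (y_j,\sigma_j):=\bigl((x_j-\bar x_j)/d_j,(t_j-\bar t_j)/d_j^2\bigr).
\]
Then $(y_j,\sigma_j)$ lies on the unit parabolic sphere and, by construction, at parabolic distance exactly $1$ from $\Gamma(\u_j)$. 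The point of rescaling at the distance to the free boundary (rather than at the distance to $X_0$) is precisely to guarantee that the limit of $(y_j,\sigma_j)$ cannot fall on the free boundary of the blow-up.

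The next step is to pass to the limit. Since $\bar X_j\to X_0\in\mathcal{R}$, the openness of the regular set (Proposition \ref{R-points}) gives $\bar X_j\in\mathcal{R}$ for large $j$, whence $\W(\u,0+;\bar X_j)=A_q$. Combining the almost-monotonicity formula of Theorem \ref{cutoff}, upper semicontinuity of $X\mapsto\W(\u,0+;X)$, and continuity of $\W(\u,r;\cdot)$ in $X$ for fixed $r>0$ gives $\W(\u,rd_j;\bar X_j)\to A_q$ for every fixed $r>0$. Together with the uniform $\kp$-growth (Theorem \ref{thm:growth}), a subsequence of $\u_j$ converges in $C^0_{\mathrm{loc}}$ to a backward self-similar global solution $\h_*$ with $\M(\h_*)=A_q$ and nonempty interior coincidence set, inherited from the flat side of $\bar X_j$ by an argument parallel to the one in the proof of Lemma \ref{cauchy-inverse-estimate}. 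Theorem \ref{lowenergy} (for $q=0$) or \ref{lower-energy-for-q} (for $q>0$) then forces $\h_*\in\bH$, say $\h_*(x)=\alpha(x\cdot\nu)_+^\kp\e$, and the unit-distance-to-$\Gamma(\u_j)$ condition survives in the limit, yielding $(y_j,\sigma_j)\to(y_*,\sigma_*)$ with $y_*\cdot\nu\geq 1$, so $(y_*,\sigma_*)$ lies strictly inside $\{|\h_*|>0\}$.

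Now the conclusion is immediate: on a small parabolic neighbourhood of $(y_*,\sigma_*)$ contained in $\{|\h_*|>0\}$, uniform convergence together with $|\h_*|\geq c>0$ gives $|\u_j|\geq c/2$ for large $j$, so $f(\u_j)$ is uniformly smooth there and interior parabolic Schauder estimates upgrade $C^0_{\mathrm{loc}}$ convergence to $C^\infty_{\mathrm{loc}}$ convergence. Since $\h_*$ is time-independent, $\partial_t\u_j(y_j,\sigma_j)\to\partial_t\h_*(y_*)=0$ while $|\u_j(y_j,\sigma_j)|\to\alpha>0$; hence $g_j(y_j,\sigma_j)\to 0$, which by scale invariance equals $g(X_j)\geq\epsilon$, the desired contradiction. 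The main obstacle is the ``double limit'' in the third step: showing that the joint passage $(\bar X_j,d_j)\to(X_0,0)$ produces a half-space solution as blow-up, which requires transporting simultaneously the energy value $A_q$ (via monotonicity plus openness of $\mathcal{R}$) and the half-space geometry of the coincidence set (via nondegeneracy and the isolation of $\bH$ from Proposition \ref{H-isolated}).
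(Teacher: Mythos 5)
Your argument is sound and shares the paper's overall skeleton (contradiction sequence, rescaling at the scale of the distance to the free boundary, identification of the blow-up limit as a half-space solution via the Weiss energy, Dini's theorem, the energy classification and the isolation property), but it closes the argument by a genuinely different mechanism. The paper rescales at the interior points $(x_j,t_j)$ themselves, keeps the value $m=\limsup g^{1/2}$, and then uses the subcaloricity of $g$ (respectively $g^\theta$) from Lemma \ref{dtsubcaloric} together with the strong maximum principle to force $|\partial_t \u_0|=m|\u_0|^q$ in the component of $Q_1^-$ containing the origin, contradicting the time-independence of the half-space limit. You instead rescale at the nearest free boundary points $\bar X_j$ and exploit that the rescaled contradiction points stay at unit parabolic distance from $\Gamma(\u_j)$, hence converge to a point well inside the positivity set of the half-space limit, where interior parabolic Schauder estimates upgrade the convergence and give $\partial_t\u_j\to 0$ directly; combined with the scale invariance of $g$ (your identity is correct, since $\kp(1-q)=2$) this yields the contradiction without invoking Lemma \ref{dtsubcaloric} or any maximum principle, and without the maximizing-sequence device. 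For this particular lemma that is a real simplification (the subcaloricity is of course still needed afterwards for the H\"older estimate in Lemma \ref{holder}).

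Two details should be made explicit. First, because you measure distance with $|X|=(|x|^2+|t|)^{1/2}$, the nearest free boundary point may lie in the past of $X_j$, so the limit point $(y_*,\sigma_*)$ may have $\sigma_*>0$; the Weiss energy only forces the half-space structure of ${\textbf h}_*$ for negative times, so to identify the limit (and to run the nondegeneracy argument placing $y_*$ on the positive side, and the Schauder step) on $\{t>0\}$ you must invoke the forward growth estimate of Remark \ref{forward-regularity} and the forward uniqueness Lemma \ref{uniqueness-forward}, exactly as the paper does at the corresponding point of Lemma \ref{cauchy-inverse-estimate}. Second, for $q>0$ the hypothesis $\{|{\textbf h}_*|=0\}^\circ\neq\emptyset$ of Theorem \ref{lower-energy-for-q} is not simply ``inherited from the flat side of $\bar X_j$'': flatness at $\bar X_j$ is known only at blow-up scales, which are unrelated to $d_j$. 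Either transfer the flatness by a nondegeneracy argument, or bypass Theorem \ref{lower-energy-for-q} altogether as in Propositions \ref{Regular-well-define} and \ref{R-points}: if ${\textbf h}_*\notin\bH$, pin the distance to $\bH$ at an intermediate scale $\tau_j\le d_j$, note that the energy at those scales is still $A_q$ (so the corresponding limit is backward self-similar), and contradict the isolation Proposition \ref{H-isolated}. With these two patches—both using only tools the paper already provides—your proof is complete.
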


\begin{proof}   
Let $ (x_j,t_j)\rightarrow (x_0,t_0) $ be a maximizing sequence in the sense that 
\begin{equation*}
\lim_{j\rightarrow +\infty} g(x_j,t_j)= \limsup_{(x,t)\rightarrow(x_0,t_0)}g (x,t):=m^2>0.
\end{equation*}
Let $d_j:=\sup \{r:Q_r^-(x_j,t_j)\subset \{|\u|>0\} \}$ and $  (y_j,s_j)\in \partial_pQ_{d_j}^-(x_j,t_j) \cap \Gamma$. 
Following  the same lines of proof as that of Lemma \ref{cauchy-inverse-estimate}, we may assume 
\begin{equation*}
\frac{\u(d_j x +x_j, d_j^2t+t_j)}{d_j^\kp}:=\u_j(x,t)\rightarrow \u_0(x,t),
\end{equation*}
\begin{equation*}
((y_j-x_j)/d_j, (s_j-t_j)/d_j^2):=(\tilde{y}_j, \tilde{s}_j)\rightarrow (\tilde{y}_0, \tilde{s}_0)\in \partial_p Q_1^-,
\end{equation*}
and
\begin{equation}\label{lem5-3-rel1}
\u_0(x,t)=\alpha((x-\tilde{y}_0)\cdot \nu)_+^\kp\e, \textrm{ in } \R^n\times (-\infty, 0].
\end{equation}
Since $Q_1^-\subset \{ |\u_j|>0\}$, then $ Q_1^-\subset \{ |\u_0|>0\}$, and the convergence is uniform in $Q_1^-$.
Hence  
$$| \partial_t \u_0(0,0)|\,|\u_0(0,0)|^{-q} =\lim_{j \to \infty} |\partial_t \u_j(0,0)|\,|\u_j(0,0)|^{-q} =\lim_{j \to \infty} |\partial_t \u( x_j, t_j)|\,|\u( x_j, t_j)|^{-q}  = m, $$
and for all $
(x,t)\in Q_1^-$,
\begin{align*}
 |\partial_t \u_0(x,t)| \,|\u_0(x,t)|^{-q}& =\lim_{j \to \infty} |\partial_t \u_j(x,t)| \,|\u_j(x,t)|^{-q} \\
 &=\lim_{j \to \infty} |\partial_t \u( d_j x+x_j, d_j^2 t +t_j)| \,|\u(d_j x+x_j, d_j^2 t +t_j)|^{-q}  \leq m. 
 \end{align*}

 Since $|\partial_t \u|^2$ is subcaloric for $q=0$ or  $g^\theta$ for $q>0$
 (Lemma \ref{dtsubcaloric}), we can 
 apply the maximum principle to arrive at $ |\partial_t \u_0(x,t)|=m|\u_0(x,t)|^q$ in the connected component of $Q_1^-$, containing the origin, 
which contradicts  \eqref{lem5-3-rel1}.
\end{proof}

Now using a standard iterative argument one can prove the H\"older regularity 
of the time derivative. 

\begin{lemma}\label{holder}
Let $g$ be the function defined in Lemma \ref{dtsubcaloric} and suppose $ (x_0,t_0) \in \Gamma $ is a regular free boundary point.
Then $g$ is  a H\"{o}lder continuous function in a neighbourhood of $(x_0,t_0)$.
\end{lemma}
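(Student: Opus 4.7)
The plan is to extend the subcaloric function (or a suitable power of it) by zero across the free boundary and then run a De Giorgi-type iteration tailored to points where the coincidence set has positive parabolic density.

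First I would set $h := g$ when $q=0$ and $h := g^\theta$ when $0<q<1$, where $\theta\geq 2$ is as in Lemma \ref{dtsubcaloric}, so that $h$ is a nonnegative classical subcaloric function in $\{|\u|>0\}\cap Q_{r_0}(x_0,t_0)$. Using Proposition \ref{R-points} I shrink $r_0$ so that every free boundary point in $Q_{r_0}(x_0,t_0)$ is regular; Lemma \ref{dtcontinuous} then allows me to extend $h$ by zero across $\Gamma\cap Q_{r_0}(x_0,t_0)$ and onto $\{|\u|=0\}$ as a continuous function. A routine verification shows that this extension is subcaloric globally on $Q_{r_0}(x_0,t_0)$ in the viscosity sense: if a $C^2$ test function $\phi$ touches the extended $h$ from above at a point $p$ with $h(p)=0$, then $\phi\geq 0$ in a neighbourhood of $p$ with $\phi(p)=0$, so $p$ is a local minimum of $\phi$ and one reads off $\nabla\phi(p)=0$, $\partial_t\phi(p)=0$, $\Delta\phi(p)\geq 0$, hence $H\phi(p)\geq 0$.

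The core of the argument is a uniform oscillation decay: I would prove that there exist $\eta\in(0,1)$ and $r_1\in(0,r_0)$ such that
\[
\sup_{Q_{r/2}(X_1)} h \;\leq\; (1-\eta)\sup_{Q_{r}(X_1)} h
\]
for every $r<r_1$ and every $X_1\in Q_{r_1}(x_0,t_0)\cap\Gamma$. The two inputs are: (a) a uniform positive parabolic density of the coincidence set, $|Q_r(X_1)\cap\{|\u|=0\}|\geq c_0 |Q_r|$, which I would extract by a compactness/blow-up argument using the nondegeneracy from Proposition \ref{Nondegeneracy} together with the fact that every blow-up at a regular point lies in $\bH$ (Proposition \ref{Regular-well-define}), since a half-space profile has half-density of the coincidence set; and (b) the parabolic De Giorgi-type lemma applied to the normalized function $v := h/\sup_{Q_r(X_1)}h\in[0,1]$, which is a nonnegative subcaloric function vanishing on a set of density $\geq c_0$, yielding $v\leq 1-\eta$ on $Q_{r/2}(X_1)$.

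Iterating this decay estimate at $(x_0,t_0)$ gives $\sup_{Q_{2^{-k}r_1}(x_0,t_0)}h\leq (1-\eta)^k\sup_{Q_{r_1}(x_0,t_0)}h$, hence a Hölder bound $h(X)\leq C|X-(x_0,t_0)|^\gamma$ with $\gamma=-\log_2(1-\eta)>0$. The same estimate applied with uniform constants to every regular point in a neighbourhood, combined with standard interior parabolic regularity for $h$ in the open set $\{|\u|>0\}$, upgrades this pointwise decay to Hölder continuity of $h$ on a full neighbourhood of $(x_0,t_0)$. For $q>0$, Hölder continuity of $g=h^{1/\theta}$ then follows from that of $h$, with a possibly smaller exponent. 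The main obstacle I expect is establishing the uniform density estimate (a) with constants that do not deteriorate as $X_1$ varies over nearby regular points, together with invoking the parabolic De Giorgi lemma cleanly for subcaloric functions vanishing on a set of positive parabolic density; once these are in place, the oscillation iteration is standard.
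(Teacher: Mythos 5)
Your proposal is correct and follows essentially the same route as the paper: extend the subcaloric function ($g$, or $g^\theta$ for $q>0$) by zero across the free boundary using Lemma \ref{dtcontinuous}, use the uniform largeness of the coincidence set near regular points, apply a De Giorgi-type growth lemma for subcaloric functions (the paper cites Lemma A4 of \cite{Caf77}) to get geometric decay of the supremum on dyadic cylinders, and iterate to obtain the H\"older estimate. Your extra care about the viscosity-sense subcaloricity of the extension, the compactness argument for the uniform density constant, and recovering $g=h^{1/\theta}$ for $q>0$ only makes explicit steps the paper leaves implicit.
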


\begin{proof}
Lemma \ref{dtsubcaloric} and Lemma \ref{dtcontinuous} together imply that $g$ (or $g^\theta$ for $0<q<1$) is a continuous subcaloric function in a neighbourhood of regular points (we extend $g$ to zero in $\{\u=0\}$).
Since the coincidence set $\{ \u = 0 \}$  close to regular points are uniformly large, 
 we may invoke  Lemma A4 in \cite{Caf77}, which states that 
  if $ h \leq M$ in $Q_1:= B_1 \times (0,1)$ is a continuous subcaloric function and 
\begin{equation*}
\frac{|Q_1 \cap \{h <M/2\}|}{|Q_1|}>\lambda>0, 
\end{equation*}
then there exists $0<\gamma=\gamma(\lambda)<1$ such that 
\begin{equation*}
h(0,1)<\gamma M.
\end{equation*}
Since $g$  is subcaloric, we obtain that
\begin{equation*}
\sup_{Q_{r/2}} g(x,t) \leq \gamma \sup_{Q_{r}} g(x,t).
\end{equation*}
Fix $(x,t) \in Q_{{r_0}/2}(x_0,t_0)$, then there exists $k\geq 1$ such that 
$2^{-k-1}r_0< |x|\leq 2^{-k}r_0$, and 
\begin{equation*}
g(x,t)   \leq \gamma \sup_{Q_{2^{-k+1}r_0}} g(x,t)  \leq \gamma^k \sup_{Q_{r_0}} g(x,t)  \leq  \frac{1}{\gamma}\left( \frac{|x|}{r_0}\right)^{-\frac{\ln \gamma}{\ln 2}}\sup_{Q_{r_0}} g(x,t).
\end{equation*}
Hence the function $ g$ is H\"{o}lder continuous with the exponent $ \beta = -\frac{\ln \gamma}{2 \ln 2}$.
\end{proof}

\begin{corollary}\label{holder-time-derivative}
Let  $\emph{\u}$ be a   solution to \eqref{system} and suppose  that $(x_0,t_0)\in\Gamma^\kp(\emph{\u})$ is a regular point,  then there exists constants $C$, $0<r_0<1$ and $0<\beta<1$ such that
$$
\sup_{Q^-_r(x_0,t_0)} |\partial_t\emph{\u}| \leq C r^{\kp-2+\beta}, \qquad  \forall \ 0<r<r_0.
$$
\end{corollary}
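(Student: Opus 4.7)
The corollary is essentially a direct bookkeeping consequence of the three preceding results (Theorem \ref{thm:growth}, Lemma \ref{dtcontinuous}, Lemma \ref{holder}), together with the algebraic identity $\kp-2 = q\kp$, which holds because $\kp = 2/(1-q)$.

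My plan is as follows. First I would combine Lemma \ref{dtcontinuous} with Lemma \ref{holder}. Since $(x_0,t_0)$ is a regular point, Lemma \ref{dtcontinuous} gives $g(x_0,t_0)=0$, and Lemma \ref{holder} gives a H\"older exponent $\beta\in(0,1)$ and a radius $r_0>0$ such that
\[
g(x,t) \;\le\; C\,|(x,t)-(x_0,t_0)|^{2\beta} \qquad \text{for } (x,t)\in Q_{r_0}(x_0,t_0).
\]
Restricting to the backward cylinder $Q_r^-(x_0,t_0)$ with $r<r_0$, this yields
\[
|\partial_t \u(x,t)|^2\,|\u(x,t)|^{-2q} \;=\; g(x,t)\;\le\; C r^{2\beta}.
\]

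Second, I would insert the optimal growth bound. Theorem \ref{thm:growth} applies at the regular point $(x_0,t_0)\in\Gamma^\kp(\u)$ and gives
\[
\sup_{Q_r^-(x_0,t_0)} |\u| \;\le\; C r^{\kp},
\]
so that $|\u|^{2q}\le C r^{2q\kp}$ on the same cylinder. Multiplying through produces
\[
|\partial_t \u|^2 \;\le\; C\,r^{2\beta}\,|\u|^{2q} \;\le\; C\,r^{2\beta + 2q\kp}.
\]

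Finally, I would simplify the exponent using $\kp=2/(1-q)$, which gives $q\kp = \kp-2$. Taking square roots,
\[
\sup_{Q_r^-(x_0,t_0)} |\partial_t\u| \;\le\; C\,r^{\beta + q\kp} \;=\; C\,r^{\kp-2+\beta},
\]
which is exactly the claim. The only place one must be slightly careful is the case $q>0$: Lemma \ref{holder} is stated for $g$ itself (obtained from the subcaloricity of $g^\theta$ via Lemma A4 in \cite{Caf77}), so no extra root-taking is needed, and the estimate $|\u|^{-2q}\cdot|\partial_t\u|^2=g$ makes sense in $\{|\u|>0\}$ while both sides vanish continuously on the free boundary by Lemma \ref{dtcontinuous}. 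There is no real obstacle here; the argument is essentially an assembly step.
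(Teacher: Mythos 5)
Your proposal is correct and is precisely the assembly the paper intends (the corollary is stated without proof immediately after Lemma \ref{holder}): combine the vanishing and H\"older decay of $g=|\partial_t\u|^2|\u|^{-2q}$ at the regular point (Lemmas \ref{dtcontinuous} and \ref{holder}) with the optimal growth $\sup_{Q_r^-(x_0,t_0)}|\u|\leq Cr^\kp$ from Theorem \ref{thm:growth}, and use the identity $q\kp=\kp-2$ coming from $\kp=2/(1-q)$. No gap; your remark that for $q>0$ the H\"older estimate is already for $g$ itself (not $g^\theta$) matches the paper's Lemma \ref{holder}.
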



\section{Regularity of the free boundary}\label{sec:regularity}

We consider the following local (fixed time) version of balanced  energy;

\begin{align*} 
{W}_{t_0}(\u,r,x_0):=\frac{1}{r^{n+2\kp-2} } \int_{B_r(x_0)}     | \nabla \u(x, t_0) |^2& + \frac2{1+q} {|\u(x,t_0)|^{1+q}} dx\\
& - \frac{\kp}{r^{n+2\kp-1}} \int_{\partial B_r(x_0)} |\u(x,t_0)|^2 d \mathcal{H}^{n-1}.
\end{align*}
\begin{proposition} \label{Wprop}
Let $(x_0,t_0)\in \Gamma^\kp$ be a regular free boundary point, then
there exist constants $C>0$ and $0<\beta<1$, such that
\begin{equation*}
\left |{W}_{t_0}(\emph{\u},r_2,x_0)-{W}_{t_0}(\emph{\u},r_1,x_0)-2 \int_{r_1}^{r_2}   r \int_{\partial B_1(0)} \left| \frac{d }{dr}  \emph{\u}_{r, t_0}\right|^2 d \mathcal{H}^{n-1}dr   \right| \leq C|r_2^\beta-r_1^\beta|.
\end{equation*}
\end{proposition}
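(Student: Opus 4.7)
The plan is to translate $x_0$ to the origin, introduce the spatial rescaling $\u_r(x) := r^{-\kp}\u(rx, t_0)$, and compute $\frac{d}{dr}W_{t_0}(\u, r, 0)$ directly, showing that it equals the desired boundary term plus an error $E(r)$ satisfying $|E(r)| \le C r^{\beta-1}$; integrating then yields $\int_{r_1}^{r_2}|E(r)|\,dr \le C|r_2^\beta - r_1^\beta|$. By a change of variables, using the identity $\kp(1+q) = 2\kp - 2$,
\begin{equation*}
W_{t_0}(\u, r, 0) = \int_{B_1}\left(|\nabla \u_r|^2 + \tfrac{2}{1+q}|\u_r|^{1+q}\right)dx - \kp\int_{\partial B_1}|\u_r|^2\, d\mathcal{H}^{n-1}.
\end{equation*}
The guiding identity is $r\tfrac{d\u_r}{dr} = x\cdot\nabla\u_r - \kp\u_r$, whose restriction to $\partial B_1$ reads $\nabla\u_r\cdot\nu = r\tfrac{d\u_r}{dr} + \kp\u_r$.

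Differentiating $W$ in $r$ and integrating by parts the Dirichlet integral against $\tfrac{d\u_r}{dr}$ produces a boundary contribution $2\int_{\partial B_1}(\nabla\u_r\cdot\nu)\cdot\tfrac{d\u_r}{dr}\,d\mathcal{H}^{n-1}$, which by the boundary identity splits into the desired $2r\int_{\partial B_1}|\tfrac{d\u_r}{dr}|^2\,d\mathcal{H}^{n-1}$ together with an extra $2\kp\int_{\partial B_1}\u_r\cdot\tfrac{d\u_r}{dr}\,d\mathcal{H}^{n-1}$; this extra term is cancelled exactly by the derivative of $-\kp\int_{\partial B_1}|\u_r|^2\,d\mathcal{H}^{n-1}$. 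The remaining interior contribution, after combining the Dirichlet piece with the derivative of the obstacle potential $\tfrac{2}{1+q}|\u_r|^{1+q}$, reads
\begin{equation*}
-2\int_{B_1}\frac{d\u_r}{dr}\cdot\left(\Delta\u_r - |\u_r|^{q-1}\u_r\chi_{\{|\u_r|>0\}}\right)dx.
\end{equation*}
Scaling the PDE $\Delta\u - \partial_t\u = |\u|^{q-1}\u\chi$ by $\u_r(x) = r^{-\kp}\u(rx, t_0)$ produces $\Delta\u_r - |\u_r|^{q-1}\u_r\chi_{\{|\u_r|>0\}} = r^{2-\kp}\partial_t\u(rx, t_0)$ (the obstacle term cancels exactly because $\kp q + 2 - \kp = 0$), so the entire bulk term collapses into
\begin{equation*}
E(r) = -2 r^{2-\kp}\int_{B_1}\frac{d\u_r}{dr}(x)\cdot\partial_t\u(rx, t_0)\,dx.
\end{equation*}

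For the error estimate, Corollary \ref{holder-time-derivative} gives $|\partial_t\u(y, t_0)| \le C r^{\kp-2+\beta}$ for $|y| \le r$, while the optimal growth Theorem \ref{thm:growth} together with standard interior parabolic estimates applied at each $y$ at scale $|y|/2$ yields $|\nabla\u(y, t_0)| \le C|y|^{\kp-1}$; consequently $|\nabla\u_r|, |\u_r|$ are bounded uniformly on $B_1$, and hence $|\tfrac{d\u_r}{dr}| = r^{-1}|x\cdot\nabla\u_r - \kp\u_r| \le Cr^{-1}$ on $B_1$. Combining,
\begin{equation*}
|E(r)| \le 2 r^{2-\kp}\cdot C r^{-1}\cdot C r^{\kp - 2 + \beta}\cdot|B_1| \le C r^{\beta-1},
\end{equation*}
and integration from $r_1$ to $r_2$ produces the claimed bound. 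The main technical points are the exact cancellation of the $\kp$-boundary term, the cancellation of the obstacle potential in the bulk, and the clean alignment of scaling exponents $r^{2-\kp}\cdot r^{\kp-2+\beta} = r^\beta$; the last is forced by the choice $\kp = 2/(1-q)$ and by invoking Corollary \ref{holder-time-derivative} with precisely the correct growth exponent, and is what converts the pointwise error bound into the Hölder-in-$r$ modulus asserted in the proposition.
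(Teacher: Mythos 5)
Your proposal is correct and follows essentially the same route as the paper: rescale to $\u_{r,t_0}$, differentiate $W_{t_0}$, integrate by parts using the boundary identity $\nabla\u_{r,t_0}\cdot\nu=r\frac{d}{dr}\u_{r,t_0}+\kp\,\u_{r,t_0}$ and the scaled equation to reduce the bulk term to $\partial_t\u$, then invoke Corollary \ref{holder-time-derivative} together with the growth bound $|\frac{d}{dr}\u_{r,t_0}|\leq C/r$ and integrate in $r$. The only (harmless) difference is that you keep the factor $r^{2-\kp}$ in the bulk term explicitly, which makes the exponents match exactly as $r^{\beta-1}$, whereas the paper's display absorbs it and uses $\kp\geq2$ to reach the same bound.
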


\begin{proof}
Let us denote  by $ \u_{r,t_0}:=\frac{\u(rx+x_0,t_0)}{r^\kp}$, then
\begin{equation*} 
{W}_{t_0}(\u,r,x_0)= \int_{B_1(0)}     | \nabla \u_{r, t_0}|^2 +\frac2{1+q} |\u_{r,t_0}|^{1+q} dx - \kp
\int_{\partial B_1(0)} |\u_{r,t_0}|^2 d \mathcal{H}^{n-1}.
\end{equation*}
Hence
\begin{equation} \label{derloc}
\begin{aligned}
\frac{d}{dr}{W}_{t_0}(\u,r,x_0)&= 2 \int_{B_1(0)}      \nabla \u_{r, t_0}   \nabla \frac{d}{dr} \u_{r, t_0}+ 
\frac{\u_{r,t_0}}{ |\u_{r,t_0}|^{1-q}}\frac{d }{dr}  \u_{r, t_0}dx 
- 2\kp\int_{\partial B_1(0)} \u_{r,t_0} \frac{d }{dr}  \u_{r, t_0} d \mathcal{H}^{n-1} \\
&=2 \int_{B_1(0)} { - \partial_t \u(rx+x_0, t_0) } \frac{d}{dr} \u_{r, t_0}dx
+2 r \int_{\partial B_1(0)} \left| \frac{d }{dr}  \u_{r, t_0}\right|^2 d \mathcal{H}^{n-1}.
\end{aligned}
\end{equation}
Letting $ r_2>r_1>0$ and integrating  \eqref{derloc} in the interval $(r_1,r_2)$, we obtain
\begin{equation*} 
\begin{aligned}
\Big |{W}_{t_0}(\u,r_2,x_0)-{W}_{t_0}(\u,r_1,x_0)-&2 \int_{r_1}^{r_2}   r \int_{\partial B_1(0)} \left| \frac{d }{dr}  \u_{r, t_0}\right|^2 d \mathcal{H}^{n-1}dr  \Big|\\
\leq &2\int_{r_1}^{r_2} \int_{B_1(0)}{ | \partial_t \u(rx+x_0, t_0)| }\left|  \frac{d}{dr} \u_{r, t_0}\right|dxdr\\
\leq &C \int_{r_1}^{r_2} \int_{B_1(0)} \frac{ | \partial_t \u(rx+x_0, t_0)| }{r}dxdr\\
\leq &C_1  \int_{r_1}^{r_2} r^{\beta-1} dr = \frac{C_1}{\beta}(r_2^\beta-r_1^\beta).
\end{aligned}
\end{equation*} 
\end{proof}

The following epiperimetric inequality from \cite{asuw15} and \cite{FSW20} will be used to treat the parabolic system.

\begin{theorem}[Epiperimetric inequality]
There exists $ \varepsilon \in (0,1)$ and $ \delta >0$ such that if $ \emph{\c}=\emph{\c}(x)$ is a backward self-similar function of degree $\kp$ satisfying 
\begin{equation*}
|| \emph{\c} -\emph{\h} ||_{W^{1,2}(B_1, \R^m)}+ || \emph{\c} -\emph{\h} ||_{L^\infty (B_1, \R^m)} \leq \delta, \textrm{ for some } \h \in \bH,
\end{equation*}
then there exists $ \emph{\v} \in W^{1,2}(B_1; \R^m)$ such that $\emph{\v} =\emph{\c} $ on $\partial B_1$ and
 \begin{equation*}
M(\emph{\v})-M(\emph{\h}) \leq (1-\varepsilon)\left( M(\emph{\c})- M(\emph{\h})\right),
\end{equation*}
where
\begin{equation*}
M(\emph{\v}):=\int_{B_1}(|\nabla \emph{\v}|^2+\frac2{1+q}|\emph{\v}|^{1+q})dx -\kp\int_{\partial B_1} |\emph{\v}|^2d \mathcal{H}^{n-1}.
\end{equation*}
\end{theorem}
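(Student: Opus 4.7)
The plan is to argue by contradiction, following the scheme developed for the elliptic systems in \cite{asuw15, FSW20}. Suppose the statement fails: there exist sequences $\varepsilon_k\downarrow 0$, $\delta_k\downarrow 0$, backward self-similar $\c_k$ of degree $\kp$, and $\h_k\in\bH$ with
\[\|\c_k-\h_k\|_{W^{1,2}(B_1,\R^m)}+\|\c_k-\h_k\|_{L^\infty(B_1,\R^m)}\le\delta_k,\]
such that every $\v\in W^{1,2}(B_1;\R^m)$ with $\v=\c_k$ on $\partial B_1$ satisfies
\[M(\v)-M(\h_k)>(1-\varepsilon_k)\bigl(M(\c_k)-M(\h_k)\bigr).\]
After a rotation assume $\h_k\to\h_*:=\alpha(x_n^+)^\kp\e_1$. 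Choose $\h_k$ to minimise $\|\c_k-\h\|_{W^{1,2}}$ over $\h\in\bH$, and set $\eta_k:=\|\c_k-\h_k\|_{W^{1,2}}$, $\w_k:=(\c_k-\h_k)/\eta_k$.

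The first step is to extract a weak limit $\w_k\rightharpoonup\w_0$ in $W^{1,2}(B_1;\R^m)$ and identify it, along the exact lines of Proposition \ref{H-isolated}. Testing $\Delta\c_k=f(\c_k)$ against $\phi$ supported in $\{x_n<0\}$ (using the $L^\infty$-smallness of $\delta_k$ to linearise the singular term $f$), dividing by $\eta_k$, and passing to the limit gives $\w_0\equiv 0$ on $\{x_n<0\}$; the same procedure in $\{x_n>0\}$ produces the linearised system $\Delta \w_0=f_\u(\h_*)\w_0$. Component-wise reflection across $\{x_n=0\}$, combined with $\kp$-homogeneity and $H^1$-integrability, pins down $\w_0$ in the explicit form
\[\w_0(x)=\bigl(x_n^{\kp-1}\ell_1\cdot x',\,\ell_2 x_n^\kp,\dots,\ell_m x_n^\kp\bigr),\qquad \ell_1\in\R^{n-1},\ \ell_j\in\R,\]
exactly as in Proposition \ref{H-isolated}. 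These are precisely the tangent vectors to $\bH$ at $\h_*$ (rotations of the half-space normal and rotations of the target direction $\e_1$). Since $\h_k$ is the best $\bH$-approximation of $\c_k$, the vector $\w_k$ is orthogonal in $W^{1,2}$ to $T_{\h_k}\bH$, and passing to the limit forces $\w_0\perp T_{\h_*}\bH$; combined with $\w_0\in T_{\h_*}\bH$, this yields $\w_0=0$. The boundary integration-by-parts used at the end of Proposition \ref{H-isolated} upgrades this to strong convergence $\w_k\to 0$ in $W^{1,2}$.

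With strong convergence in place, the next step is to construct an explicit improved competitor $\v_k$. Take $\p_k\in\bH$ closest to $\c_k|_{\partial B_1}$ on the sphere; on $B_{1-\rho_k}$ set $\v_k:=\p_k$; and on the annulus $B_1\setminus B_{1-\rho_k}$ interpolate radially between $\c_k|_{\partial B_1}$ and $\p_k|_{\partial B_{1-\rho_k}}$, for a well-chosen $\rho_k\to 0$. Since $\h_k$ is stationary for $M$ among its own boundary data, a Taylor expansion around $\h_*$ yields
\[M(\c_k)-M(\h_k)=\tfrac12 \eta_k^2\,\mathcal Q(\w_k)+o(\eta_k^2),\qquad M(\v_k)-M(\h_k)=\tfrac12 \eta_k^2\,\mathcal Q(\tilde\w_k)+o(\eta_k^2),\]
where $\mathcal Q(\xi):=\int_{B_1}(|\nabla\xi|^2+f_\u(\h_*)\xi\cdot\xi)\,dx-\kp\int_{\partial B_1}|\xi|^2\,d\cH^{n-1}$ is the second variation of $M$ at $\h_*$ and $\tilde\w_k:=(\v_k-\h_k)/\eta_k$. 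The key elliptic fact, which is the core content of the epiperimetric proof in \cite{asuw15, FSW20}, is that the Rayleigh quotient of $\mathcal Q$ on $\kp$-homogeneous maps orthogonal to $T_{\h_*}\bH$ has a strictly positive gap above the critical value achieved on $T_{\h_*}\bH$; together with a sub-$\kp$ homogeneous extension $\tilde\w_k$ of the same trace as $\w_k$, this gap produces $M(\v_k)-M(\h_k)\le(1-\varepsilon_0)(M(\c_k)-M(\h_k))$ for a universal $\varepsilon_0>0$, contradicting the standing assumption.

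The main technical obstacle is the quantitative spectral analysis of the Jacobi form $\mathcal Q$: one must show that, among $\R^m$-valued $\kp$-homogeneous perturbations orthogonal to $T_{\h_*}\bH$, only modes in $T_{\h_*}\bH$ achieve the critical eigenvalue $\kp(\kp-1)$, with a uniform gap for all other modes in the Laplace--Beltrami decomposition on the half-sphere. The singular coefficient $f_\u(\h_*)\sim (x_n^+)^{q-1}$ makes this a weighted eigenvalue problem and explains why the hypothesis of $L^\infty$-closeness (rather than merely $W^{1,2}$-closeness) is indispensable, both in the compactness step for $\w_k$ and in controlling the quadratic remainders in the Taylor expansion of $M$.
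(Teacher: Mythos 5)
The paper does not prove this theorem at all: it is imported verbatim from the elliptic works \cite{asuw15} and \cite{FSW20} (the statement concerns only the spatial variable, $\c=\c(x)$, and the functional $M$ is the elliptic Weiss energy), and the text explicitly introduces it as "the following epiperimetric inequality from \cite{asuw15} and \cite{FSW20}". Your proposal is therefore not being measured against an in-paper argument but against the cited elliptic proofs, and as a reconstruction of those it has a genuine gap: the decisive step is asserted, not proved. You write that "the key elliptic fact, which is the core content of the epiperimetric proof in \cite{asuw15, FSW20}, is that the Rayleigh quotient of $\mathcal Q$ \dots has a strictly positive gap", and then use it to conclude. That is circular — the spectral/competitor analysis \emph{is} the theorem; everything before it (contradiction sequences, normalization $\w_k=(\c_k-\h_k)/\eta_k$, identification of the linearized limit as in Proposition \ref{H-isolated}, orthogonality to the tangent space of $\bH$) is the routine part, and it only shows $\w_k\to 0$, which by itself gives no quantitative energy improvement for finite $k$.

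Moreover, the quantitative step you do sketch would fail as stated. The Taylor expansion $M(\c_k)-M(\h_k)=\tfrac12\eta_k^2\,\mathcal Q(\w_k)+o(\eta_k^2)$ is not available, because the potential $\tfrac{2}{1+q}|\v|^{1+q}$ is not twice differentiable at $\v=0$: on $\{x_n<0\}$, where $\h_*$ vanishes, one has $\int|\c_k|^{1+q}\approx\eta_k^{1+q}\int|\w_k|^{1+q}$, and since $1+q<2$ this term dominates $\eta_k^2$ unless one proves a strong quantitative decay of $\w_k$ in the negative half-ball, which the qualitative convergence $\w_k\rightharpoonup 0$ does not give. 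Relatedly, the second variation has coefficient $f_\u(\h_*)\sim (x_n^+)^{\kp(q-1)}=(x_n^+)^{-2}$ (not $(x_n^+)^{q-1}$ as you wrote), so $\mathcal Q$ is a Hardy-type singular form that is not even finite on general $W^{1,2}$ perturbations; making sense of it, and handling the low-homogeneity modes that cannot be absorbed by a gap (they must be removed using the orthogonality to translations/rotations of the half-space and the specific structure of the problem), is exactly the technical work carried out in \cite{asuw15} (for $q=0$) and \cite{FSW20} (for $q>0$). Also note that $\bH$ is not a linear space, so "best approximation" yields only the one-sided variational inequalities used in \eqref{best-approx}, not exact $W^{1,2}$-orthogonality to a tangent space. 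As it stands, your argument either defers its core to the references (acceptable, since the paper itself does the same, but then the expansion machinery you add is unnecessary and partly incorrect) or, read as a self-contained proof, is incomplete at precisely the point where the theorem's content lies.
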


\begin{proposition} (Energy decay, uniqueness of blow-up limits)
Let $ (x_0,t_0)\in \Gamma$ be a regular point, and $ \emph{\u}_0$ be any blow-up of $ \emph{\u}$ at $(x_0,t_0)$.
Suppose that the epiperimetric inequality holds with $0<\varepsilon <1$ for each 
\begin{equation*}
\emph{\c}_r(x, t_0):=|x|^\kp \emph{\u}_{r,t_0}\left(x/|x|,t_0\right)=\frac{|x|^\kp}{r^\kp} \emph{\u}\left(x_0+rx/|x|,t_0\right)
\end{equation*}
and for all $r \leq r_0$.
Then there exists 
$C>0$ and $ 0<\gamma<1$ such that 
\begin{equation}\label{gamma}
\left| {W}_{t_0}(\emph{\u},r,x_0)-{W}_{t_0}(\emph{\u},0+,x_0) \right| \leq Cr^\gamma, \textrm{ for small } r>0,
\end{equation}
and
\begin{equation}\label{gammahalf}
\int_{\partial B_1(0)} \left| \frac{\emph{\u}(rx+x_0,t_0)}{r^\kp}-\emph{\u}_0(x)\right| d \mathcal{H}^{n-1}\leq C r^{\gamma/2}, 
\end{equation}
therefore $\emph{\u}_0$ is the unique blow-up limit of $\emph{\u}$ at the point $(x_0,t_0)$.
\end{proposition}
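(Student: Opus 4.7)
The strategy is the classical Weiss epiperimetric scheme, carried out at the frozen time slice $t = t_0$, with the $\partial_t\u$ contribution treated as a small Hölder-controlled perturbation.

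Set $e(r) := W_{t_0}(\u, r, x_0) - A_q$, where $A_q = M(\h)$ for any $\h\in\bH$. Since $(x_0,t_0)\in\mathcal{R}$ and $\partial_t\u$ is Hölder continuous near $(x_0,t_0)$ by Corollary~\ref{holder-time-derivative}, the frozen-time blow-ups of $\u_{r,t_0}$ agree with the parabolic blow-ups up to vanishing error and hence belong to $\bH$, so $e(r)\to 0^+$. Differentiating the scaling and using $\kp(q-1)+2=0$, the rescaling $\u_{r,t_0}$ satisfies the elliptic equation $\Delta\u_{r,t_0} = f(\u_{r,t_0}) + r^{2-\kp}\partial_t\u(x_0+rx,t_0)$ in $B_1$, with the forcing of size $O(r^\beta)$ in $L^\infty(B_1)$. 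Because the functional $M$ is convex in $\u$ for $0\le q<1$, this yields the quasi-minimality
\begin{equation*}
M(\u_{r,t_0}) \le M(\v) + C r^\beta \lVert \v - \u_{r,t_0}\rVert_{L^1(B_1)}
\end{equation*}
whenever $\v|_{\partial B_1} = \u_{r,t_0}|_{\partial B_1}$.

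For $r\le r_0$ the trace $\u_{r,t_0}|_{\partial B_1}$ is $\delta$-close to some $\h\in\bH$, so the epiperimetric inequality produces $\v$ with $\v = \c_r$ on $\partial B_1$ and $M(\v)-M(\h)\le (1-\varepsilon)(M(\c_r)-M(\h))$. Combined with quasi-minimality,
\begin{equation*}
e(r) \le (1-\varepsilon)\bigl(M(\c_r)-M(\h)\bigr) + C r^\beta.
\end{equation*}
The key auxiliary identity, obtained by decomposing $\u_{r,t_0} = \c_r + \w_r$ with $\w_r=0$ on $\partial B_1$, integrating by parts against the $\kp$-homogeneous $\c_r$, and absorbing the $r^{2-\kp}\partial_t\u$ term via Corollary~\ref{holder-time-derivative}, is
\begin{equation*}
M(\c_r) - M(\u_{r,t_0}) = \frac{r^2}{n+2\kp-2}\int_{\partial B_1}\bigl|\tfrac{d}{dr}\u_{r,t_0}\bigr|^2 d\mathcal{H}^{n-1} + O(r^\beta).
\end{equation*}

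Substituting this identity into the inequality for $e(r)$ and using the lower bound $e'(r) \ge 2r\int_{\partial B_1}|d\u_{r,t_0}/dr|^2\, d\mathcal{H}^{n-1} - C r^{\beta-1}$ coming from Proposition~\ref{Wprop}, one obtains the differential inequality
\begin{equation*}
e'(r) \ge \frac{c_1}{r}\, e(r) - C r^{\beta-1}, \qquad c_1 := \frac{2\varepsilon(n+2\kp-2)}{1-\varepsilon} > 0,
\end{equation*}
which, combined with $e(0+)=0$, integrates to \eqref{gamma} with $\gamma$ any positive exponent strictly smaller than $\min(c_1,\beta)$.

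Finally, Proposition~\ref{Wprop} together with the bound $e(r)\le Cr^\gamma$ gives $\int_r^{2r} s\int_{\partial B_1}|d\u_{s,t_0}/ds|^2 d\mathcal{H}^{n-1}\, ds \le Cr^\gamma$; by Cauchy-Schwarz on each dyadic annulus this yields $\int_r^{2r}\int_{\partial B_1}|d\u_{s,t_0}/ds| d\mathcal{H}^{n-1} ds \le C r^{\gamma/2}$, and summing over dyadic scales shows that $r \mapsto \u_{r,t_0}|_{\partial B_1}$ is Cauchy in $L^1(\partial B_1)$ with rate $r^{\gamma/2}$, proving uniqueness of the blow-up $\u_0$ and \eqref{gammahalf}. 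The main obstacle is the polar-coordinate energy identity above: its elliptic version is standard \cite{asuw15, FSW20}, but in the parabolic setting one must simultaneously handle the $r^{2-\kp}\partial_t\u$ perturbation in the integration by parts and in the quasi-minimality step, which is exactly where the sharp Hölder estimate for $\partial_t\u$ from Section~\ref{sec:Holder} becomes indispensable.
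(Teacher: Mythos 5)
Your proposal is, in substance, the paper's own argument: you use the quasi-minimality of $\u_{r,t_0}$ coming from the convex perturbed functional $J(\v)=\int_{B_1}|\nabla\v|^2+\frac2{1+q}|\v|^{1+q}dx+2r^{2-\kp}\int_{B_1}\v\cdot\partial_t\u(x_0+rx,t_0)dx$ with forcing of size $O(r^\beta)$ by Corollary \ref{holder-time-derivative}, the epiperimetric inequality applied to the $\kp$-homogeneous extension $\c_r$, a differential inequality for $e(r)$ integrated to give the H\"older decay, and the dyadic Cauchy--Schwarz summation via Proposition \ref{Wprop} for uniqueness of the blow-up; these are exactly the paper's ingredients, only reorganized.

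The one step you must repair is the justification of your ``key auxiliary identity''. The decomposition $\u_{r,t_0}=\c_r+\w_r$ with $\w_r=0$ on $\partial B_1$, followed by integration by parts against $\c_r$, cannot produce it: the potential term $\frac2{1+q}|\cdot|^{1+q}$ is not quadratic, so the cross terms do not collapse, and the right-hand side $\int_{\partial B_1}\bigl|\frac d{dr}\u_{r,t_0}\bigr|^2d\mathcal{H}^{n-1}=\frac1{r^2}\int_{\partial B_1}|\partial_\nu\u_{r,t_0}-\kp\u_{r,t_0}|^2d\mathcal{H}^{n-1}$ involves the normal derivative on $\partial B_1$, which an interior integration by parts against a difference vanishing on $\partial B_1$ cannot generate. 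The identity is nevertheless true, and its proof is precisely the paper's computation: differentiating $W_{t_0}(\u,r,x_0)$ directly in $r$ and using the $\kp$-homogeneity of $\c_r$ to identify the tangential boundary terms with $(n+2\kp-2)M(\c_r)$ gives the exact formula
\begin{equation*}
\frac{d}{dr}W_{t_0}(\u,r,x_0)=\frac{n+2\kp-2}{r}\bigl(M(\c_r)-W_{t_0}(\u,r,x_0)\bigr)+r\int_{\partial B_1}\Bigl|\frac d{dr}\u_{r,t_0}\Bigr|^2d\mathcal{H}^{n-1},
\end{equation*}
and comparing this with the integration-by-parts formula in the proof of Proposition \ref{Wprop} (whose $\partial_t\u$ term is $O(r^{\beta-1})$ by Corollary \ref{holder-time-derivative} together with $|\frac d{dr}\u_{r,t_0}|\leq C/r$) yields exactly $M(\c_r)-M(\u_{r,t_0})=\frac{r^2}{n+2\kp-2}\int_{\partial B_1}|\frac d{dr}\u_{r,t_0}|^2d\mathcal{H}^{n-1}+O(r^\beta)$. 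With that substitution your scheme closes (and even gives the slightly larger exponent $\frac{2\varepsilon(n+2\kp-2)}{1-\varepsilon}$, since the paper simply discards the boundary term in its inequality); the points you leave implicit --- $W_{t_0}(\u,0+,x_0)=A_q$ at a regular point, and the lower bound $e(r)\geq-Cr^\beta$ from the almost-monotonicity in Proposition \ref{Wprop}, needed for the absolute value in the first estimate --- are at the same level of detail as the paper itself.
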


\begin{proof}
Let $ e(r):= {W}_{t_0}(\u,r,x_0)-{W}_{t_0}(\u,0+,x_0) $,  then 
\begin{equation*}
\begin{aligned}
e^{\prime}(r)=&-\frac{n+2\kp-2}{r^{n+2\kp-1} } \int_{B_r(x_0)}     | \nabla \u(x, t_0) |^2 + \frac2{1+q} {|\u(x,t_0)|^{1+q}} dx +\frac{\kp(n+2\kp-1)}{r^{n+2\kp}} \int_{\partial B_r(x_0)} |\u(x,t_0)|^2 d \mathcal{H}^{n-1} \\
&+ \frac{1}{r^{n+2\kp-2} } \int_{\partial B_r(x_0)}     | \nabla \u(x, t_0) |^2 + 2 {|\u(x,t_0)|} d\mathcal{H}^{n-1} \\
&-\frac{\kp}{r^{n+2\kp-1}}\left( 
2\int_{\partial B_r(x_0)} (\nabla \u(x,t_0)  \cdot \nu   ) \cdot \u(x,t_0) d \mathcal{H}^{n-1} 
+\frac{n-1}{r}\int_{\partial B_r(x_0)} |\u(x,t_0)|^2 d \mathcal{H}^{n-1} \right)\\
=&-\frac{n+2\kp-2}{r}\left(e(r)+{W}_{t_0}(\u,0+,x_0)\right)
 +\frac{1}{r } \int_{\partial B_1(x_0)}     | \nabla \u_r(x, t_0) |^2 + \frac2{1+q} {|\u_r(x,t_0)|^{1+q}} d\mathcal{H}^{n-1}\\
&-\frac{2\kp}{r}\int_{\partial B_1(x_0)}( \nabla \u_r(x,t_0) \cdot  \nu  )\cdot \u_r(x,t_0) d \mathcal{H}^{n-1} 
-\frac{\kp(n-2)}{r}\int_{\partial B_1(x_0)} |\u_r(x,t_0)|^2 d \mathcal{H}^{n-1}\\
\geq&-\frac{n+2\kp-2}{r}\left(e(r)+{W}_{t_0}(\u,0+)\right)
+
\frac{1}{r}\int_{\partial B_1(0)} \left(  |\nabla_\theta \u_r|^2+\frac2{1+q}|\u_r|^{1+q}-(\kp(n-2)+\kp^2) |\u_r|^2 \right)d \mathcal{H}^{n-1}\\
=&
-\frac{n+2\kp-2}{r}\left(e(r)+{W}_{t_0}(\u,0+)\right)
+
\frac{1}{r}\int_{\partial B_1(0)} \left(  |\nabla_\theta \c_r|^2+\frac2{1+q}|\c_r|^{1+q}-(\kp(n-2)+\kp^2) |\c_r|^2 \right)d \mathcal{H}^{n-1}\\
=&\frac{n+2\kp-2}{r}\left(M(\c_r)-e(r)-{W}_{t_0}(\u,0+,x_0)\right)\\
 \geq&
\frac{n+2\kp-2}{r} \left(\frac{M(\v) -{W}_{t_0}(\u,0+,x_0)}{1-\varepsilon}-e(r)   \right),
\end{aligned}
\end{equation*}
where we  employed the epiperimetric inequality in the last step. Now let us observe that
 $\u_{r,t_0}$ minimises the following energy
 \begin{equation*}
 J(\v):=\int_{B_1(0)}  |\nabla \v|^2 +\frac2{1+q}|\v|^{1+q} dx +2r^{2-\kp} \int_{B_1(0)} \v \cdot \partial_t \u(x_0+rx,t_0)dx,
 \end{equation*}
 where $ \v =\u_{r,t_0}$ on $\partial B_1(0)$.
Hence
\begin{equation*}
\begin{aligned}
M(\v)=&\int_{B_1} |\nabla \v|^2+\frac2{1+q}| \v|^{1+q}dx -\kp\int_{\partial B_1} |\v|^2d\mathcal{H}^{n-1}\\
=&J(\v)-2r^{2-\kp} \int_{B_1} \v \cdot \partial_t \u(x_0+rx,t_0)dx
-\kp\int_{\partial B_1} |\u_{r,t_0}|^2d\mathcal{H}^{n-1} \\
\geq& M(\u_{r,t_0})+2r^{2-\kp}  \int_{B_1}(\u_{r,t_0}- \v) \cdot \partial_t \u(x_0+rx,t_0)dx.
\end{aligned}
\end{equation*}
Now we may conclude that
\begin{equation}
\begin{aligned}
\label{edecay}
e^\prime(r) \geq &
\frac{n+2\kp-2}{r(1-\varepsilon)} \left({M(\u_{r,t_0}) -{W}_{t_0}(\u,0+,x_0)} +2r^{2-\kp} \int_{B_1}(\u_{r,t_0}- \v) \cdot \partial_t \u(x_0+rx,t_0)dx \right)\\
&- \frac{(n+2\kp-2)e(r)}{r}
=\frac{(n+2\kp-2)e(r)}{r(1-\varepsilon)} - \frac{(n+2\kp-2)e(r)}{r}\\
&+2  \frac{(n+2\kp-2)}{(1-\varepsilon)} r^{1-\kp} \int_{B_1} (\u_{r,t_0}- \v)\cdot \partial_t \u(x_0+rx,t_0)dx\\
\geq &\frac{\varepsilon (n+2\kp-2)e(r)}{r(1-\varepsilon)}-Cr^{\beta-1},
\end{aligned}
\end{equation}
by Corollary \ref{holder-time-derivative}. 
It follows from \eqref{edecay} that
\begin{equation*}
\frac{d}{dr}\left( e(r) r^{-\frac{\varepsilon(n+2\kp-2)}{1-\varepsilon}}\right)\geq -Cr^{\beta-1-\frac{\varepsilon(n+2\kp-2)}{1-\varepsilon}}.
\end{equation*}
Integrating the last inequality from $r$ to $1$, we obtain 
\begin{equation*}
e(1)- e(r) r^{-\frac{\varepsilon(n+2\kp-2)}{1-\varepsilon}} \geq -\frac{C}{\beta-\frac{\varepsilon(n+2\kp-2)}{1-\varepsilon}}\left( 1- r^{\beta-\frac{\varepsilon(n+2\kp-2)}{1-\varepsilon}}\right),
\end{equation*}
and therefore
\begin{equation*}
e(r) \leq e(1) r^{\frac{\varepsilon(n+2\kp-2)}{1-\varepsilon}}   -\frac{C}{\beta-\frac{\varepsilon(n+2\kp-2)}{1-\varepsilon}}\left( r^{\frac{\varepsilon(n+2\kp-2)}{1-\varepsilon}}- r^{\beta}\right)
\leq C_0 r^\gamma,
\end{equation*}
where $\gamma:=\min \left(\beta, \frac{\varepsilon(n+2\kp-2)}{1-\varepsilon} \right)$, and $C_0>0$  depends only on the given parameters.
The proof of \eqref{gamma} is now complete, and we proceed to the proof of \eqref{gammahalf}. \\

\par
Let $2^{-l} <r_1\leq 2^{-l+1}\leq 2^{-k}<r_2 <2^{-k+1}$, where $ k,l \in \mathbb{N}$.
It is easy to see that 
\begin{equation*}
\begin{aligned}
\int_{\partial B_1(0)}& \left|\frac{\u(x_0+r_1x,t_0)}{r_1^\kp}  -\frac{\u(x_0+r_2x,t_0)}{r_2^\kp}    \right|  d\mathcal{H}^{n-1} \leq 
\int_{\partial B_1(0)}  \int_{r_1}^{r_2} \left| \frac{d }{dr}  \u_{r,t_0}\right| dr d\mathcal{H}^{n-1}  \\
\leq & \sum_{j=l}^k \int_{\partial B_1(0)}  \int_{2^{-j}}^{2^{-j+1}} \left|\frac{d }{dr}  \u_{r,t_0}  \right| dr d\mathcal{H}^{n-1} \\
\leq  &C_n \sum_{j=l}^k \left( \int_{\partial B_1(0)}  \int_{2^{-j}}^{2^{-j+1}} r \left|\frac{d }{dr}  \u_{r,t_0}  \right|^2  dr d\mathcal{H}^{n-1} \right)^\frac{1}{2}
\end{aligned}
\end{equation*}
By Proposition \ref{Wprop} and relation \eqref{gamma}, we can estimate
\begin{equation*}
\int_{\partial B_1(0)}  \int_{2^{-j}}^{2^{-j+1}} r \left|\frac{d }{dr}  \u_{r,t_0}  \right|^2  dr d\mathcal{H}^{n-1} \leq 
C (2^{(-j+1)\gamma}- 2^{-j\gamma}) \leq C2^{-\gamma j}.
\end{equation*}
Hence
\begin{equation*}
\begin{aligned}
\int_{\partial B_1(0)} &\left|\frac{\u(x_0+r_1x,t_0)}{r_1^\kp}  -\frac{\u(x_0+r_2x,t_0)}{r_2^\kp}    \right|  d\mathcal{H}^{n-1} \leq 
C   \sum_{j=l}^k 2^{-\gamma j/2}\\
=&C\frac{2^{-\gamma l/2}-2^{-\gamma(k+1)/2}}{1-2^{-\gamma/2}} \leq  \frac{C}{2^{\gamma/2}-1}(r_2^{\gamma/2}-r_1^{\gamma/2}),
\end{aligned}
\end{equation*}
and \eqref{gammahalf} follows.
\end{proof}

The following theorem has been proved as Theorem 4.7 in \cite{FSW20} (for $q>0$) and Theorem 4 in \cite{asuw15} (for $q=0$).

\begin{theorem}\label{converge-speed}
Let $C_h$ be a compact set of points $x_0\in\Gamma^\kp_{t_0}$ with the following property:
at least one blow-up limit $\emph{\u}_0$ of $\emph{\u}(rx+x_0,t_0)/r^\kp$   is a half-space solution, say $\emph{\u}_0(x)=\alpha\max(x\cdot\nu(x_0,t_0),0)^\kp\emph{\e}(x_0,t_0)$ for some $\nu(x_0,t_0)\in\partial B_1\subset\R^n$ and $\emph{\e}(x_0,t_0)\in\partial B_1\subset\R^m$. Then there exist $r_0$ and $C<\infty$ such that
$$
\int_{\partial B_1} \left| \frac{\emph{\u}(rx+x_0,t_0)}{r^\kp}-\alpha\max(x\cdot\nu(x_0,t_0),0)^\kp\emph{\e}(x_0,t_0)\right| d \mathcal{H}^{n-1}\leq C r^{\gamma/2},
$$
for every $x_0\in C_h$ and every $r\leq r_0$.
\end{theorem}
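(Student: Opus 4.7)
The plan is to obtain Theorem~\ref{converge-speed} as a uniform version on $C_h$ of the pointwise decay estimate \eqref{gammahalf} just established in the previous proposition. The strategy is to retrace the proof of \eqref{gammahalf} while upgrading every $x_0$-dependent constant to one that is uniform in $x_0\in C_h$, via a compactness argument.

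First, I would make the H\"older estimate for $\partial_t\u$ uniform: there exist $\beta\in(0,1)$, $r_0>0$, and $C<\infty$ such that $|\partial_t\u(x,t)|\le C\,\mathrm{dist}((x,t),\Gamma)^{\kp-2+\beta}$ for every base point in an open neighbourhood of $C_h$. This follows by inspecting the proof of Lemma~\ref{holder}: the relevant constants in the iteration depend only on the uniform lower density of $\{|\u|=0\}$ near regular points and on local $L^\infty$ bounds of $\u$, both of which are locally uniform on the open set $\mathcal R$ (Proposition~\ref{R-points}), and then globally uniform on the compact set $C_h$.

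Second, and this is the main obstacle, I would show that the epiperimetric inequality applies uniformly on $C_h$: there exist $r_0>0$ and, for each $x_0\in C_h$ and each $r\le r_0$, some $\h\in\bH$ such that the frozen-time profile $\c_{r,x_0}(\cdot,t_0)$ lies within the distance $\delta$ of the epiperimetric theorem from $\h$ in $W^{1,2}(B_1;\R^m)\cap L^\infty(B_1;\R^m)$. I would argue by contradiction: if this fails, there are $x_j\in C_h$ and $r_j\downarrow 0$ with $\c_{r_j,x_j}(\cdot,t_0)$ at distance $\ge\delta$ from $\bH$. By compactness of $C_h$ pass to $x_j\to x_*\in C_h$; the uniform polynomial growth from Theorem~\ref{thm:growth} together with the uniform interior estimates for the slice equation (where $\partial_t\u$ is H\"older-controlled by Step 1) lets a subsequence of $\c_{r_j,x_j}(\cdot,t_0)$ converge to a time-independent profile. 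A diagonal argument comparing it with blow-ups of $\u$ at $(x_*,t_0)$ identifies the limit with the $t_0$-slice of a blow-up of $\u$ at $(x_*,t_0)$; regularity of $x_*$ and Proposition~\ref{Regular-well-define} force this to lie in $\bH$, contradicting the distance assumption. A parallel argument controls $e(1)$ uniformly on $C_h$.

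With these uniform ingredients in place, the differential inequality
\[
e'(r)\ \ge\ \frac{\varepsilon(n+2\kp-2)}{r(1-\varepsilon)}\,e(r)\ -\ C r^{\beta-1}
\]
from the proof of the previous proposition holds uniformly for $x_0\in C_h$ and $0<r\le r_0$, with $C$ and $r_0$ independent of $x_0$; integrating yields the uniform energy decay $|W_{t_0}(\u,r,x_0)-A_q|\le Cr^\gamma$ with $\gamma=\min(\beta,\varepsilon(n+2\kp-2)/(1-\varepsilon))$. Plugging this into the uniform almost-monotonicity of Proposition~\ref{Wprop} and performing the dyadic summation exactly as in the proof of \eqref{gammahalf} gives
\[
\int_{\partial B_1}\left|\frac{\u(r_1x+x_0,t_0)}{r_1^\kp}-\frac{\u(r_2x+x_0,t_0)}{r_2^\kp}\right|d\mathcal H^{n-1}\ \le\ C\,|r_2^{\gamma/2}-r_1^{\gamma/2}|,
\]
uniformly for $x_0\in C_h$ and $0<r_1<r_2\le r_0$. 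Sending $r_2\to 0^+$ shows $\u(rx+x_0,t_0)/r^\kp$ is uniformly Cauchy on $\partial B_1$ and identifies the limit with the unique half-space blow-up $\alpha\max(x\cdot\nu(x_0,t_0),0)^\kp\e(x_0,t_0)$, completing the proof.
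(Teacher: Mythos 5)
The paper itself does not prove Theorem \ref{converge-speed}; it is imported from the elliptic papers (Theorem 4 of \cite{asuw15} for $q=0$ and Theorem 4.7 of \cite{FSW20} for $q>0$), the point being that the slice $x\mapsto\u(x,t_0)$ solves the elliptic system with the right-hand side $f(\u)+\partial_t\u$, which is H\"older-controlled near regular points by Corollary \ref{holder-time-derivative}, so the elliptic uniform-decay statement applies. Your proposal instead re-proves the statement inside the parabolic framework by uniformizing, over the compact set $C_h$, every constant appearing in the preceding proposition (energy decay and \eqref{gammahalf}): uniform H\"older bound on $\partial_t\u$, uniform applicability of the epiperimetric inequality, then integration of the differential inequality and the dyadic summation. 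This is the same mechanism that the cited elliptic theorems use, so the two routes deliver the same estimate; yours is more self-contained, the paper's is shorter.

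One step, however, is stated too loosely and as written would not stand: in your contradiction argument for the uniform epiperimetric hypothesis, a limit of rescalings with \emph{variable} centers $x_j\to x_*$ and radii $r_j\to0$ is not in general a blow-up of $\u$ at the fixed point $(x_*,t_0)$, so the claim that ``a diagonal argument \dots identifies the limit with the $t_0$-slice of a blow-up of $\u$ at $(x_*,t_0)$'' (after which you invoke Proposition \ref{Regular-well-define}) needs justification. The repair is available with the paper's own tools, and is exactly how the analogous issue is handled in the proofs of Lemma \ref{cauchy-inverse-estimate} and Proposition \ref{R-points}: on a compact subset of the (relatively open) regular set, $\W(\eta\u,r;X)+F(r)$ is monotone in $r$ and converges pointwise to $A_q$ as $r\to0+$, so by Dini's theorem the convergence is uniform; consequently any limit of variable-center rescalings has constant balanced energy $A_q$, is backward self-similar, and is forced into $\bH$ by Theorem \ref{lowenergy}, respectively Theorem \ref{lower-energy-for-q} together with the isolation property of Proposition \ref{H-isolated}. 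With that substitution (and the observation that the hypothesis on $C_h$ is used through the regularity of its points, so Corollary \ref{holder-time-derivative} and Proposition \ref{R-points} hold uniformly on a neighbourhood of $C_h$), your argument goes through and yields the stated uniform $r^{\gamma/2}$ decay.
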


\begin{theorem}\label{thm:FBregularity}
In a neighbourhood of regular points the free boundary is $C^{1,\alpha}$ in space and  $C^{0,1/2}$ in time.
\end{theorem}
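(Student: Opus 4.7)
The plan is to split the argument into a spatial part and a temporal part. The spatial $C^{1,\alpha}$ regularity will be an elliptic-style consequence of the uniform spatial convergence rate established in Theorem \ref{converge-speed}. The $C^{0,1/2}$ regularity in time will be obtained by a parabolic blow-up contradiction argument, in which the H\"older decay of $\partial_t \u$ from Corollary \ref{holder-time-derivative} forces the blow-up limit to be time-independent, at which point the isolation result Proposition \ref{H-isolated} closes the argument.

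For the spatial regularity, fix a regular free boundary point $(x_0,t_0)$. By Proposition \ref{R-points} there is a parabolic neighbourhood $N$ in which every free boundary point is regular. Applying Theorem \ref{converge-speed} on a compact piece $C_h \subset \Gamma \cap N$ produces continuous maps $(y,t)\mapsto\nu(y,t)\in S^{n-1}$ and $(y,t)\mapsto\e(y,t)\in S^{m-1}$ together with the uniform estimate
$$
\int_{\partial B_1} \left| \frac{\u(rx+y,t)}{r^\kp}-\alpha\max(x\cdot\nu(y,t),0)^\kp\e(y,t)\right| d\mathcal{H}^{n-1}\leq C r^{\gamma/2}.
$$
I would then compare this expansion at two regular free boundary points $(y_1,t), (y_2,t) \in C_h$ sharing the common time $t$. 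Choosing $r$ comparable to an appropriate power of $|y_1-y_2|$ and using the optimal gradient bound $|\nabla\u|\lesssim r^{\kp-1}$ near the free boundary (a consequence of Theorem \ref{thm:growth} and interior regularity), the triangle inequality yields
$$
|\nu(y_1,t)-\nu(y_2,t)|+|\e(y_1,t)-\e(y_2,t)| \leq C|y_1-y_2|^\alpha
$$
for some $\alpha>0$ depending on $\gamma,\kp,n$. A standard implicit-function argument, together with the nondegeneracy Proposition \ref{Nondegeneracy}, then identifies each time slice $\Gamma_t\cap N_t$ with the graph of a $C^{1,\alpha}$ function in the direction $\nu(x_0,t_0)$, with estimates uniform for $t$ close to $t_0$.

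For the time regularity, choose coordinates so that $\nu(x_0,t_0)=\e_n$ and parametrise the spatial slices of the free boundary near $(x_0,t_0)$ as graphs $x_n=F(x',t)$. Arguing by contradiction, suppose there exist sequences $(x_j',t_j), (x_j',s_j)\to(x_0',t_0)$ with $\rho_j:=|t_j-s_j|^{1/2}\to 0$ and $|F(x_j',t_j)-F(x_j',s_j)|/\rho_j\to\infty$. Set $a_j:=F(x_j',t_j)$, $b_j:=F(x_j',s_j)$, and consider the parabolic rescaling
$$
\u_j(x,t):=\rho_j^{-\kp}\,\u\bigl((x_j',a_j)+\rho_j x,\ t_j+\rho_j^2 t\bigr),
$$
so that $(0,0)$ and $\bigl((0,(b_j-a_j)/\rho_j),\pm 1\bigr)$ are both free boundary points of $\u_j$, the $x_n$-coordinate of the second tending to infinity. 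Theorem \ref{thm:growth} and Remark \ref{forward-regularity} provide uniform local bounds on $\u_j$, while Corollary \ref{holder-time-derivative} yields $\partial_t\u_j\to 0$ locally uniformly. A subsequence therefore converges locally uniformly to a global solution $\u_\infty$ that is stationary in $t$ and backward self-similar of degree $\kp$ at the origin; Proposition \ref{H-isolated} (combined with the identification of blow-ups at regular points) forces $\u_\infty(x,t)=\alpha(x_n)_+^\kp\e$, whose free boundary is the flat hyperplane $\{x_n=0\}\times\R$. The spatial $C^{1,\alpha}$-graph structure produced in the first stage, applied to the $\u_j$, then yields for each large fixed $M$ an intermediate free boundary point of $\u_j$ at time $\pm 1$ inside $B_M$ at $|x_n|=M$; by nondegeneracy this converges to a free boundary point of $\u_\infty$ at distance $M$ from $\{x_n=0\}$, a contradiction for large $M$.

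The main obstacle lies in the time-regularity step, specifically in guaranteeing that the parabolic blow-up produces a time-independent half-space profile. This hinges crucially on Corollary \ref{holder-time-derivative}: without the H\"older decay of $\partial_t\u$ at regular points the blow-up could be a genuinely time-dependent self-similar solution such as $\theta(x,t)=(-2t/\kp)^{\kp/2}\e$, and no flatness of the free boundary at nonzero times could be extracted. A secondary technical point is the uniform propagation of the $C^{1,\alpha}$-graph structure along the rescaled sequence $\u_j$, required to produce the intermediate free boundary point at the prescribed height $|x_n|=M$; this is handled by carrying the constants in Stage 1 through the rescaling and exploiting that the rescaled origin remains a regular point with blow-up arbitrarily close to the limit half-space profile.
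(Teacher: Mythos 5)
Your two-stage decomposition is essentially the paper's own proof: the spatial $C^{1,\alpha}$ part is obtained exactly as you do, from the uniform rate in Theorem \ref{converge-speed} and H\"older continuity of $(x_0,t_0)\mapsto(\nu,\e)$ (the paper records the exponent $\beta=\gamma/(\gamma+2\kp)$), while the half-Lipschitz-in-time part is the ``blow-up plus contradiction'' argument that the paper declares standard and leaves to the reader. Your use of Corollary \ref{holder-time-derivative} to force time-independence of the limit and of Theorem \ref{converge-speed}, Proposition \ref{H-isolated} to identify it with $\alpha(x_n)_+^\kp\e$ is in the spirit of the paper's Lemma \ref{cauchy-inverse-estimate} and Proposition \ref{R-points}; note only that backward self-similarity of $\u_\infty$ is not automatic for a blow-up along moving centres and scales tied to $|t_j-s_j|^{1/2}$ -- it requires the Dini-type uniform convergence of the localized Weiss energy at nearby regular points, as in Lemma \ref{cauchy-inverse-estimate}.

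The concluding step of your time argument, however, fails as written. You claim that the graph structure yields, for each large $M$, a free boundary point of $\u_j$ \emph{at time $\pm1$} inside $B_M$ with $|x_n|=M$. At the rescaled time $\pm1$ the free boundary of $\u_j$ over $|x'|\le M$ is the graph $x'\mapsto\bigl(F(x_j'+\rho_j x',s_j)-a_j\bigr)/\rho_j=(b_j-a_j)/\rho_j+O(M)$, since $\nabla_{x'}F$ is bounded (indeed small) near $(x_0,t_0)$; as $(b_j-a_j)/\rho_j\to\pm\infty$, no such point exists for large $j$. An intermediate-value argument in $t$ would instead require continuity of $t\mapsto F(x',t)$, which is not available at this stage. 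The standard repair is more direct and needs no intermediate point: if $(b_j-a_j)/\rho_j\to+\infty$, the fixed point $\bigl((0',M),\pm1\bigr)$ corresponds to $(x_j',a_j+\rho_j M)$ at time $s_j$, which lies on the zero side of the graph, so $\u_j\bigl((0',M),\pm1\bigr)=0$ for large $j$, contradicting locally uniform convergence to $\u_\infty=\alpha(x_n)_+^\kp\e$, which equals $\alpha M^\kp\e\neq0$ there; if $(b_j-a_j)/\rho_j\to-\infty$, the point $\bigl((0',-M),\pm1\bigr)$ lies in the closure of the positivity set of $\u_j$, so Proposition \ref{Nondegeneracy} gives $\sup_{Q_\sigma^-\bigl((0',-M),\pm1\bigr)}|\u_j|\ge c\sigma^\kp$ for a fixed $\sigma<M$, contradicting $\u_\infty\equiv0$ on $\{x_n\le0\}$. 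With this replacement (and the Weiss-energy justification of self-similarity) your argument is complete and coincides with the proof the paper intends.
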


\begin{proof}
First, consider the normal vectors $\nu(x_0,t_0)$ and $\e(x_0,t_0)$ defined in Theorem \ref{converge-speed}, we show that $(x_0,t_0)\mapsto\nu(x_0,t_0)$ and $(x_0,t_0)\mapsto\e(x_0,t_0)$ are H\"older continuous with exponent $\beta=\frac\gamma{\gamma+2\kp}$.

Therefore,  it follows that for each time section the free boundary 
is $C^{1,\beta} $, provided the free boundary point is a regular point.
This in turn implies that the free boundary is a graph in the time direction, close to such points. 
To see that the free boundary is half-Lipschitz in time, we may perform a blow-up at free boundaries, 
along with a contradiction argument. This is standard and left to the reader.
\end{proof}


\section{Appendix}
\begin{lemma}[Uniqueness of forward problem]\label{uniqueness-forward}
Let $\emph{\u}$ and $\emph{\v}$ be global solutions of \eqref{system} in $\R^n\times(-\infty,t_0]$ which have polynomial growth. If $\emph{\u}(\cdot,s)=\emph{\v}(\cdot,s)$ for some $s<t_0$,
 then $\emph{\u}(\cdot,t)=\emph{\v}(\cdot,t)$ for all $s\leq t\leq t_0$.
\end{lemma}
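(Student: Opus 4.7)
The plan is to show that $v := |\u - \v|^2$ is a nonnegative, subcaloric function on $\R^n \times [s, t_0]$ with polynomial growth and vanishing initial data, and then to invoke a Tychonoff--Widder type maximum principle to conclude $v \equiv 0$, hence $\u \equiv \v$, on that strip.

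First I would establish the pointwise monotonicity of the nonlinearity,
\[
(f(\u) - f(\v)) \cdot (\u - \v) \geq 0 \qquad \text{for all } \u, \v \in \R^m.
\]
For $0 < q < 1$ and $\u, \v \neq 0$, expanding the left side gives
$|\u|^{1+q} + |\v|^{1+q} - (|\u|^{q-1} + |\v|^{q-1})\,\u \cdot \v$,
and Cauchy--Schwarz $\u \cdot \v \leq |\u|\,|\v|$ bounds this below by $(|\u|^q - |\v|^q)(|\u| - |\v|) \geq 0$. The boundary cases (one or both of $\u$, $\v$ zero) and the case $q = 0$ (where the characteristic function enters) are handled by short case analyses: for instance, if $\u \neq 0 = \v$ the expression reduces to $|\u|^{1+q} \geq 0$, while for $\u, \v \neq 0$ at $q=0$ one gets $(|\u|+|\v|)(1 - \u\cdot\v/(|\u|\,|\v|)) \geq 0$.

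Next, set $\w := \u - \v$, so that $H\w = f(\u) - f(\v)$. Since $|f(\u)|, |f(\v)| \leq |\u|^q + |\v|^q$ are locally bounded and both solutions enjoy the parabolic interior regularity recalled at the start of Section~\ref{sec:quadratic} ($C^{1,\beta}_x \cap C^{0,(1+\beta)/2}_t$ for $q = 0$, $C^{2,\beta}_x \cap C^{1,\beta/2}_t$ for $q > 0$), the standard chain rule (or its $W^{2,p}_{\mathrm{loc}}$ version when $q=0$) justifies
\[
H(|\w|^2) \;=\; 2\,\w \cdot H\w + 2|\nabla \w|^2 \;=\; 2\,\w \cdot \bigl(f(\u) - f(\v)\bigr) + 2|\nabla\w|^2 \;\geq\; 0
\]
almost everywhere, by Step~1. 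Hence $v := |\w|^2 \geq 0$ is a subcaloric function on $\R^n \times [s,t_0]$ with $v(\cdot, s) \equiv 0$ and polynomial growth in $x$ inherited from $\u$ and $\v$.

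Finally I would apply the Tychonoff--Widder maximum principle for subsolutions of the heat equation with sub-Gaussian growth: if $v$ is subcaloric with $|v(x,t)| \leq C e^{a|x|^2}$ and $v(\cdot, s) \leq 0$, then $v \leq 0$ on $[s, s+\tau]$ whenever $\tau < 1/(4a)$. Since $v$ grows polynomially, this estimate holds for every $a > 0$; choosing $a < 1/(4(t_0 - s))$ covers the whole strip in one step, giving $v \leq 0$, and combined with $v \geq 0$ we obtain $v \equiv 0$ on $\R^n \times [s,t_0]$.

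The main obstacle is that $f$ fails to be Lipschitz at $\u = 0$ when $q < 1$, so a direct Gronwall argument based on $|f(\u) - f(\v)| \lesssim |\u - \v|$ is unavailable. The monotonicity established in Step~1 is precisely what allows us to sidestep this issue: it lets us discard the reaction term from the energy identity and reduce forward uniqueness to a purely linear maximum principle for the heat equation.
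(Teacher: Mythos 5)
Your proof is correct, but it takes a different route from the paper. Both arguments hinge on the same monotonicity $(f(\u)-f(\v))\cdot(\u-\v)\ge 0$ (which the paper uses silently and you actually verify -- a worthwhile addition); the difference is in how this is exploited. The paper multiplies $H(\u-\v)=f(\u)-f(\v)$ by $(\u-\v)G$ with $G$ the backward heat kernel, integrates over $\R^n$, and after discarding the nonnegative reaction term runs a Gronwall-type differential inequality for the weighted quantity $\phi(\tau)=\int_s^\tau\int_{\R^n}\tfrac{n}{-4t}|\w|^2G\,dx\,dt$, concluding $\phi\equiv0$; the polynomial growth only enters to make the Gaussian-weighted integrals and integrations by parts legitimate. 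You instead observe that $|\w|^2$ is (a.e.) subcaloric, since $H(|\w|^2)=2\,\w\cdot(f(\u)-f(\v))+2|\nabla\w|^2\ge0$, and invoke the Tychonoff--Widder/Phragm\'en--Lindel\"of principle in the growth class $e^{a|x|^2}$, which polynomial growth places you in for every $a>0$. Your route buys a cleaner pointwise mechanism and works verbatim for any terminal time $t_0$ (the paper's computation, written with the backward kernel $G(x,t)$ for $t<0$, implicitly assumes the strip sits in $\{t<0\}$ and would need a translated kernel otherwise); the cost is that you need enough regularity to differentiate $|\w|^2$ twice and you must cite a maximum principle valid for strong (not merely classical) subsolutions, since for $q=0$ the solutions are only $W^{2,1}_{p,\mathrm{loc}}$ -- you flag both points, and the standard comparison with Gaussian barriers on bounded cylinders does go through in that class, so this is a presentational caveat rather than a gap. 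The paper's energy argument avoids both issues by staying at the integral level, at the price of the explicit weighted computation.
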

\begin{proof}
Multiply $H(\u-\v)=f(\u)-f(\v)$ by $(\u-\v)G$ and integrate 
\begin{align*}
0\leq\int_s^\tau\int_{\R^n}(f(\u)-f(\v))\cdot(\u-\v)Gdxdt=-\int_s^\tau\int_{\R^n}&\Big[\frac12\partial_t|\u-\v|^2+|\nabla(\u-\v)|^2\\
&+\left(\nabla(\u-\v)\cdot\frac x{2t}\right)\cdot(\u-\v)\Big]Gdxdt.
\end{align*}
Let $\w:=\u-\v$, then
 \begin{align*}
 \frac12\int_{\R^n}|\w(x,\tau)|^2G(x,\tau)dx\leq &\int_s^\tau\int_{\R^n} \frac12|\w|^2\partial_tG-\left[|\nabla\w|^2+\left(\nabla\w\cdot\frac x{2t}\right)\cdot\w\right]Gdxdt\\
\leq  &\int_s^\tau\int_{\R^n} \left[-\frac{|x|^2+2nt}{8t^2}|\w|^2-|\nabla\w|^2+|\nabla\w|\,|\frac x{2t}|\,|\w|\right]Gdxdt\\
\leq & \int_s^\tau\int_{\R^n} \frac{n}{-4t}|\w|^2Gdxdt=:\phi(\tau).
\end{align*} 
Therefore, $-\frac{2\tau}n\phi'(\tau)\leq \phi(\tau)$ and so $\frac{d}{d\tau}\left[(-\tau)^{n/2}\phi(\tau)\right]\leq0$ for $s<\tau<0$. From $\phi(s)=0$, we conclude that $\phi(\tau)\equiv0$.
\end{proof}

\begin{lemma}\label{caloric-estimate-lemma}
Let $h$ be a caloric function in $\R^n\times(-4,0]$. Then for $s<t\leq0$ we have the following estimate
$$
e^{\frac{|x|^2}{t+s}}|h(x,t)|^2\leq  \left(\frac{\sqrt{3}s}{s-t}\right)^{n}\int_{\R^n}|h(y,s)|^2G(y,s)dy.
$$
\end{lemma}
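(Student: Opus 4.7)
The plan is to use the standard heat semigroup representation of $h$ combined with a weighted Cauchy--Schwarz inequality. Since $h$ is caloric in $\R^n\times(-4,0]$ (with polynomial growth in the intended application, which is enough for the representation formula), for every $-4<s<t\leq 0$ one has
$$
h(x,t)=\int_{\R^n}K(x-y,t-s)\,h(y,s)\,dy,\qquad K(z,\tau):=(4\pi\tau)^{-n/2}e^{-|z|^2/(4\tau)}.
$$
Writing the integrand as $[K(x-y,t-s)G(y,s)^{-1/2}]\cdot[G(y,s)^{1/2}h(y,s)]$ and applying Cauchy--Schwarz yields
$$
|h(x,t)|^2\leq J(x)\cdot\int_{\R^n}|h(y,s)|^2G(y,s)\,dy,\qquad J(x):=\int_{\R^n}\frac{K(x-y,t-s)^2}{G(y,s)}\,dy,
$$
so the task is reduced to evaluating the Gaussian integral $J(x)$ explicitly and then comparing the resulting constant with the one in the statement.

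To evaluate $J(x)$, I would expand the exponent in the integrand,
$$
-\frac{|x-y|^2}{2(t-s)}-\frac{|y|^2}{4s}=-\frac{|x|^2}{2(t-s)}+\frac{x\cdot y}{t-s}-\frac{t+s}{4s(t-s)}\,|y|^2,
$$
and check signs: since $s<0$, $s+t<0$ and $t-s>0$ (all forced by $s<t\leq 0$), the coefficient of $|y|^2$ is strictly negative, so the quadratic in $y$ is coercive and the integral converges. Completing the square in $y$ and performing the resulting Gaussian integration (this is the one place where the computation requires care, but it is otherwise routine) gives the closed form
$$
J(x)=\left(\frac{s^2}{(t-s)(-s-t)}\right)^{n/2}e^{-|x|^2/(2(t+s))}.
$$

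Combining this with the Cauchy--Schwarz bound and multiplying both sides by $e^{|x|^2/(t+s)}$ produces
$$
e^{|x|^2/(t+s)}|h(x,t)|^2\leq e^{|x|^2/(2(t+s))}\left(\frac{s^2}{(t-s)(-s-t)}\right)^{n/2}\int_{\R^n}|h(y,s)|^2G(y,s)\,dy,
$$
and the exponential factor is $\leq 1$ because $t+s<0$. The remaining constant inequality
$$
\frac{s^2}{(t-s)(-s-t)}\leq \frac{3s^2}{(s-t)^2},
$$
upon clearing denominators, reduces to $t-s\leq 3(-s-t)$, i.e.\ $s+2t\leq 0$, which is immediate from $s<0$ and $t\leq 0$. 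The only real obstacle in the argument is the careful sign bookkeeping in the Gaussian integration (in particular, verifying that the $|y|^2$-coefficient after combining the two Gaussians remains negative, which uses precisely $s+t<0$); the rest is either standard (heat semigroup representation, Cauchy--Schwarz) or trivial (the final constant comparison).
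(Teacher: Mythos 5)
Your argument is correct and follows essentially the same route as the paper: the heat-kernel representation of $h(\cdot,t)$ in terms of $h(\cdot,s)$, Cauchy--Schwarz against the weight $G(y,s)$, and a Gaussian estimate of $\int G(x-y,s-t)^2/G(y,s)\,dy$. The only (harmless) difference is that you evaluate that Gaussian integral exactly by completing the square and then compare constants, whereas the paper bounds the cross term $x\cdot y$ via Young's inequality with a parameter $\epsilon$ and optimizes; your exact computation and final comparison $s+2t\leq 0$ check out.
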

\begin{proof}
By the representation of the caloric function, we have
\begin{align*}
h(x,t)=\int_{\R^n}h(y,s)G(x-y,s-t)dy.
\end{align*} 
Then
\begin{align}\label{caloric-estimate-lemma-eq:1}
|h(x,t)|^2\leq&\left(\int_{\R^n}|h(y,s)|^2G(y,s)dy\right)\left(\int_{\R^n}\frac{\left(G(x-y,s-t)\right)^2}{G(y,s)}dy\right)
\end{align} 
On the other hand, we can write
\begin{align*}
\frac{\left(G(x-y,s-t)\right)^2}{G(y,s)}=&\left(\frac{-s}{4\pi(t-s)^2}\right)^{n/2}\exp\left(-\frac{|x-y|^2}{2(t-s)}-\frac{|y|^2}{4s}\right)\\
\leq&\left(\frac{-s}{4\pi(t-s)^2}\right)^{n/2}\exp\left(-\frac{|x|^2}{2(t-s)}+\frac{x\cdot y}{t-s}-\frac{(t+s)|y|^2}{4s(t-s)}\right)\\
\leq&\left(\frac{-s}{4\pi(t-s)^2}\right)^{n/2}\exp\left(\left(-\frac12+\epsilon\right)\frac{|x|^2}{t-s}+\left(\frac1{4\epsilon}-\frac{t+s}{4s}\right)\frac{|y|^2}{t-s}\right).
\end{align*} 
For every $\epsilon>\frac{s}{t+s}$, we obtain that
\begin{align*}
\exp\left(\left(\frac12-\epsilon\right)\frac{|x|^2}{t-s}\right)\int_{\R^n}\frac{\left(G(x-y,s-t)\right)^2}{G(y,s)}dy\leq&
\left(\frac{-s}{4\pi(t-s)^2}\right)^{n/2}\int_{\R^n}\exp\left(\left(\frac1{4\epsilon}-\frac{t+s}{4s}\right)\frac{|y|^2}{t-s}\right)dy\\
=&\left(\frac{-s}{(t-s)\left(\frac{t+s}{s}-\frac1{\epsilon}\right)}\right)^{n/2}.
\end{align*} 
Now let $\epsilon=\frac{3s-t}{2(t+s)}$, so by \eqref{caloric-estimate-lemma-eq:1} the proof will be done.
\end{proof}

\begin{lemma}\label{appx-lemma2}
Assume that $\emph{\w}\in L^2(Q_4)$ has polynomial growth and $t<0$ fixed, then
$$
\int_{\R^n}|\emph{\w}(x,t)|^2\frac{|x|^2}{-t}G(x,t)dx\leq 4\int_{\R^n}\left(n|\emph{\w}(x,t)|^2-4t|\nabla \emph{\w}(x,t)|^2\right)G(x,t)dx.
$$
\end{lemma}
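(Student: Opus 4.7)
The plan is to establish the inequality by integrating by parts on the left-hand side using the key identity
\[
\nabla G(x,t) = \frac{x}{2t} G(x,t), \qquad \text{equivalently } \frac{|x|^2}{-t} G = -2\, x\cdot\nabla G.
\]
The polynomial growth of $\w$ together with the Gaussian decay of $G$ makes all boundary terms at infinity vanish, so integration by parts may be applied freely (with a standard cutoff/approximation if one wants to be pedantic).

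First, I would rewrite the left-hand side as
\begin{align*}
\int_{\R^n}|\w|^2\,\frac{|x|^2}{-t}\,G\,dx
 \;=\; -2\int_{\R^n}|\w|^2\, x\cdot\nabla G\,dx
 \;=\; 2\int_{\R^n}\nabla\!\cdot\!(|\w|^2 x)\,G\,dx,
\end{align*}
and using $\nabla\!\cdot\!(|\w|^2 x)=n|\w|^2+x\cdot\nabla(|\w|^2)$ this becomes
\[
\int_{\R^n}|\w|^2\,\frac{|x|^2}{-t}\,G\,dx \;=\; 2n\int_{\R^n}|\w|^2 G\,dx + 2\int_{\R^n} x\cdot\nabla(|\w|^2)\,G\,dx.
\]
Since $x\cdot\nabla(|\w|^2)=2\sum_{j}w^j(x\cdot\nabla w^j)$, a Cauchy–Schwarz estimate in the $j$-index gives the pointwise bound $|x\cdot\nabla(|\w|^2)|\le 2|\w|\,|x|\,|\nabla\w|$.

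Next I would apply Cauchy–Schwarz on the integral, splitting the weight as $|\w||x|(-t)^{-1/2}G^{1/2}$ against $|\nabla\w|(-t)^{1/2}G^{1/2}$, obtaining
\[
2\int_{\R^n} x\cdot\nabla(|\w|^2)\,G\,dx
\;\leq\; 4\left(\int_{\R^n}|\w|^2\frac{|x|^2}{-t}G\,dx\right)^{1/2}\!\left(\int_{\R^n}(-t)|\nabla\w|^2 G\,dx\right)^{1/2}.
\]
Setting $A:=\int|\w|^2\tfrac{|x|^2}{-t}G\,dx$ and $B:=\int(-t)|\nabla\w|^2 G\,dx$, the above estimates read
\[
A \;\leq\; 2n\int_{\R^n}|\w|^2 G\,dx + 4\sqrt{AB}.
\]
Young's inequality in the form $4\sqrt{AB}\le \tfrac{1}{2}A+8B$ then absorbs the $A$-term on the left, yielding
\[
A \;\leq\; 4n\int_{\R^n}|\w|^2 G\,dx + 16\int_{\R^n}(-t)|\nabla\w|^2 G\,dx
 \;=\; 4\int_{\R^n}\bigl(n|\w|^2-4t|\nabla\w|^2\bigr)G\,dx,
\]
which is the desired inequality. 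There is no serious obstacle here; the only point that needs care is the integration by parts, justified by the polynomial growth assumption on $\w$ against the super-polynomial decay of $G$ (for the fixed $t<0$).
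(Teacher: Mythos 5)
Your proof is correct and follows essentially the same route as the paper: integrate by parts using $\nabla G=\frac{x}{2t}G$, then absorb half of the weighted term $\int|\w|^2\frac{|x|^2}{-t}G\,dx$ back into the left-hand side. The only (cosmetic) difference is that the paper applies Young's inequality pointwise to $4\,\w\cdot(\nabla\w\cdot x)$, whereas you apply Cauchy--Schwarz at the integral level followed by Young on $\sqrt{AB}$; both yield the same constants and both implicitly use the polynomial growth of $\w$ to guarantee finiteness of the absorbed term.
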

\begin{proof}
Using the relation $\nabla G(x,t)=\frac{x}{2t}G(x,t)$ to obtain
\begin{align*}
\int_{\R^n}|\w(x,t)|^2\frac{|x|^2}{-t}G(x,t)dx&=-2\int_{\R^n}|\w|^2(x\cdot\nabla G)dx=2\int_{\R^n}{\rm div}(|\w|^2x)Gdx\\
&=2n\int_{\R^n}|\w|^2Gdx+4\int_{\R^n}\w\cdot(\nabla\w\cdot x)Gdx\\
&\leq2n\int_{\R^n}|\w|^2Gdx+\int_{\R^n}|\w|^2\frac{|x|^2}{-2t}Gdx+\int_{\R^n}(-8t)|\nabla \w|^2Gdx.
\end{align*}
Now we can easily prove the lemma.
\end{proof}

\begin{lemma}\label{appex-lemma3}
Let $\emph{\u}$ be a function defined   in  $\R^n \times  [-R, 0)$ (for some $a, R >0$) with polynomial growth,  and $\p$ be a $\kp$-backward self-similar caloric vector-function.
Then  for $-R <  t_1 < t_2 < 0$
\begin{equation*}
\begin{aligned}
\int_{t_1}^{t_2} \int_{\R^n} \left( | \nabla (\emph{\p}-\emph{\u})|^2+\frac{\kp|\emph{\p}-\emph{\u}|^2}{2t} \right)G dx dt =
\int_{t_1}^{t_2}  \int_{\R^n} \left( | \nabla\emph{\u} |^2+\frac{\kp|\emph{\u}|^2}{2t} \right)G dxdt.
\end{aligned}
\end{equation*}
\end{lemma}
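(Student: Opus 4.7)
The plan is to expand the square on the left-hand side, observe that the pure $\u$-terms match those on the right, and show that the remaining contributions (the pure $\p$-term plus the cross term) integrate to zero against $G$ on each time slice. More precisely, one writes
\begin{align*}
|\nabla(\p-\u)|^2+\frac{\kp|\p-\u|^2}{2t}-|\nabla \u|^2-\frac{\kp|\u|^2}{2t}
= \Bigl(|\nabla\p|^2+\tfrac{\kp|\p|^2}{2t}\Bigr)-2\Bigl(\nabla\p:\nabla\u+\tfrac{\kp\p\cdot\u}{2t}\Bigr),
\end{align*}
so the claim reduces to showing that the spatial integrals of each of the two parenthesised expressions against $G(\cdot,t)$ vanish for every fixed $t<0$, after which a simple integration in $t\in(t_1,t_2)$ completes the argument.

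For the first integral, I would integrate by parts in $x$, using $\nabla G=\frac{x}{2t}G$:
\begin{equation*}
\int_{\R^n}|\nabla\p|^2\,G\,dx=-\int_{\R^n}\p\cdot\bigl(\Delta\p\,G+\nabla\p\cdot\tfrac{x}{2t}G\bigr)dx
=-\int_{\R^n}\p\cdot\bigl(\partial_t\p+\nabla\p\cdot\tfrac{x}{2t}\bigr)G\,dx,
\end{equation*}
where caloricity $\Delta\p=\partial_t\p$ was used. Since $\p$ is $\kp$-backward self-similar, $L\p=\nabla\p\cdot x+2t\partial_t\p-\kp\p=0$, so $\partial_t\p+\nabla\p\cdot\frac{x}{2t}=\frac{\kp\p}{2t}$. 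Substituting gives
\begin{equation*}
\int_{\R^n}|\nabla\p|^2\,G\,dx=-\int_{\R^n}\frac{\kp|\p|^2}{2t}G\,dx,
\end{equation*}
i.e.\ the first bracket integrates to zero. The identical computation applied to the cross term,
\begin{equation*}
\int_{\R^n}\nabla\p:\nabla\u\,G\,dx=-\int_{\R^n}\u\cdot\bigl(\partial_t\p+\nabla\p\cdot\tfrac{x}{2t}\bigr)G\,dx=-\int_{\R^n}\frac{\kp\p\cdot\u}{2t}G\,dx,
\end{equation*}
shows that the second bracket also integrates to zero (here only the properties of $\p$, not of $\u$, are used). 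Integrating the two vanishing identities in $t$ from $t_1$ to $t_2$ finishes the proof.

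The only genuine issue is justifying the integration by parts, since the spatial domain is $\R^n$. This, however, is routine: $\p$ is smooth with polynomial growth (controlled by $|x|^\kp+|t|^{\kp/2}$ via self-similarity), $\u$ has polynomial growth by hypothesis, and $G(\cdot,t)$ decays like a Gaussian in $x$ for each fixed $t<0$, so boundary terms at $|x|\to\infty$ vanish and all integrals converge absolutely. I would handle this by integrating first on $B_R$, estimating the boundary term on $\partial B_R$ by $\sup_{B_R}|\p|\cdot\sup_{B_R}|\nabla\p|\cdot R^{n-1}\cdot\max_{\partial B_R}G$, and then letting $R\to\infty$. No step here is substantial—the entire identity is purely an orthogonality statement encoding the fact that backward self-similar caloric functions are ``harmonic'' with respect to the weighted Dirichlet-type form $\int_{\R^n}(|\nabla\cdot|^2+\frac{\kp|\cdot|^2}{2t})G\,dx$.
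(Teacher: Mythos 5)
Your proof is correct and follows essentially the same route as the paper: expand the square, integrate by parts in $x$ against $G$ using $\nabla G=\frac{x}{2t}G$, and invoke caloricity together with $L\p=0$ to kill the extra terms. The only difference is cosmetic — you treat the pure $\p$-term and the cross term as two separate slice-wise orthogonality identities, while the paper combines them into a single factor $(\p-2\u)$ before integrating by parts.
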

\begin{proof}
Since  $ \nabla G(x,t)= \frac{x}{2t} G(x,t)$, we have 
\begin{equation*}
(\nabla \u : \nabla \v) G= \nabla \u : \nabla (\v G) -(\nabla \u\cdot x)\cdot\frac{\v G}{2t}.
\end{equation*}
Obviously, $   |\nabla (\p-\u) |^2 = | \nabla \u |^2 - 2 \nabla \p : \nabla \u +| \nabla \p |^2$, hence
\begin{equation*}
\begin{aligned}
\int_{t_1}^{t_2}  \int_{\R^n} \left( | \nabla (\p-\u)|^2+\frac{\kp|\p-\u|^2}{2t} \right)G dx dt=&
\int_{t_1}^{t_2}  \int_{\R^n} \left( | \nabla \u |^2+\frac{\kp|\u|^2}{2t} \right)G dx dt \\
&+\int_{t_1}^{t_2}  \int_{\R^n}  \nabla \p : (\nabla \p-2 \nabla \u ) G dx dt\\
&+ \int_{t_1}^{t_2}  \int_{\R^n}  \kp\frac{\p\cdot(\p-2\u)}{2t} G dxdt\\
=& \int_{t_1}^{t_2} \int_{\R^n} \left( | \nabla \u |^2+\frac{\kp|\u|^2}{2t} \right)G dx dt \\
&-\int_{t_1}^{t_2}  \int_{\R^n} \left( \Delta \p+ \frac{1}{2t} x \cdot \nabla \p -\frac{\kp\p}{2t}  \right)\cdot (\p-2\u)G dx dt\\
=& \int_{t_1}^{t_2}  \int_{\R^n} \left( | \nabla \u |^2+\frac{\kp|\u|^2}{2t} \right)G dx dt
\end{aligned}
\end{equation*}
where we used integration by parts and that 
\begin{equation*}
\Delta \p +\frac{1}{2t} x \cdot \nabla \p -\frac{\kp\p}{2t}=\partial_t \p +\frac{1}{2t} x \cdot \nabla \p -\frac\kp2\frac{\p}{t} = \frac{1}{2t}L \p=0.
\qedhere
\end{equation*}
\end{proof}

 The following lemma is an extension of Lemma 1.1 in \cite{caffarelli1985partial} to the parabolic case.

\begin{lemma}\label{holder-regularity}
Consider $\beta>0$ to be noninteger, and let $u(x,t)$ be a function satisfying 
$$
|H(u)(X)|\leq C_*|X|^\beta.
$$
Then there is a caloric polynomial $P$ of degree at most $\lfloor\beta\rfloor+2$ such that
$$
\lVert u-P \rVert_{L^\infty(Q_r^-)}\leq CC_*r^{\beta+2},\qquad \text{for }r\in(0,1),
$$
where constant $C$ depends only on $n, \beta$ and $\lVert u\rVert_{L^\infty(Q_1^-)}$.
\end{lemma}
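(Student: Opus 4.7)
\medskip

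\noindent\textbf{Proof plan (Lemma \ref{holder-regularity}).}

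The plan is a parabolic Campanato-type iteration: I will construct, at each dyadic scale $\lambda^k$, a caloric polynomial $P_k$ of parabolic degree $\leq N:=\lfloor\beta\rfloor+2$ that approximates $u$ to order $\lambda^{k(\beta+2)}$, and then show that these polynomials converge to the desired $P$. After normalizing so that $\|u\|_{L^\infty(Q_1^-)}+C_*\leq 1$, I fix a contraction factor $\lambda=\lambda(n,\beta)\in(0,1/4)$ (to be chosen below) and prove by induction that there exist caloric polynomials $P_k$ of parabolic degree $\leq N$ with
$\|u-P_k\|_{L^\infty(Q_{\lambda^k}^-)}\leq \sigma_k\lambda^{k(\beta+2)}$,
where the nondimensional quantities $\sigma_k$ will be shown to remain uniformly bounded. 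Starting with $P_0=0$ and $\sigma_0\leq 1$, the step from $k$ to $k+1$ is the heart of the argument.

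For the one-step improvement I rescale: let $v(X):=\lambda^{-k(\beta+2)}(u-P_k)(\lambda^k x,\lambda^{2k}t)$ on $Q_1^-$. Because $P_k$ is caloric and parabolic dilations act as $|(\lambda^k x,\lambda^{2k}t)|=\lambda^k|X|$, a direct computation gives $\|v\|_{L^\infty(Q_1^-)}\leq\sigma_k$ and $|Hv(X)|\leq C_*|X|^\beta$. I then split $v=h+w$ on $Q_{1/2}^-$, with $h$ the caloric function having boundary data $v$ on $\partial_p Q_{1/2}^-$ and $w:=v-h$ satisfying $Hw=Hv$ with zero parabolic boundary data. The standard barrier $\psi(X)=c(1/4-|x|^2-t)$ yields
$\|w\|_{L^\infty(Q_{1/2}^-)}\leq c(n)C_*$.
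The bounded caloric function $h$, of norm $\leq\sigma_k+c(n)C_*$, admits a parabolic Taylor expansion at the origin; taking $q$ to be its parabolic Taylor polynomial of degree $\leq N$, interior derivative estimates give
$\|h-q\|_{L^\infty(Q_\lambda^-)}\leq C_N\lambda^{N+1}(\sigma_k+c(n)C_*)$.
Adding, then rescaling back via $\tilde q(Y):=\lambda^{k(\beta+2)}q(\lambda^{-k}Y)$ and setting $P_{k+1}:=P_k+\tilde q$, I obtain
$\sigma_{k+1}\leq C_N\lambda^{N-\beta-1}\sigma_k+C_1(\lambda)C_*$.

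The decisive observation is that since $\beta$ is non-integer, $N+1=\lfloor\beta\rfloor+3>\beta+2$, so the exponent $N-\beta-1=\lfloor\beta\rfloor+1-\beta$ is \emph{strictly positive}. Hence $\lambda$ can be chosen so small (depending only on $n,\beta$) that $C_N\lambda^{N-\beta-1}\leq 1/2$, giving the contraction $\sigma_{k+1}\leq\tfrac12\sigma_k+C_1C_*$, so that $\{\sigma_k\}$ is bounded uniformly in $k$. To produce the single polynomial $P$, I pass to the limit: writing $P_{k+1}-P_k=\tilde q^{(k)}$ coefficient by coefficient, the coefficient of $Y^\alpha$ picks up a factor $\lambda^{k(\beta+2-|\alpha|)}$, and for every $|\alpha|\leq N$ one has $\beta+2-|\alpha|\geq\beta-\lfloor\beta\rfloor>0$, so this is summable. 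Therefore $P_k\to P$ in every $C^j$, $P$ is a caloric polynomial of degree $\leq N$, and $\|P-P_k\|_{L^\infty(Q_{\lambda^k}^-)}\lesssim\lambda^{k(\beta+2)}$. Combining with the inductive bound yields $\|u-P\|_{L^\infty(Q_{\lambda^k}^-)}\leq C\lambda^{k(\beta+2)}$, and for arbitrary $r\in(0,1)$ I sandwich $\lambda^{k+1}\leq r\leq\lambda^k$ to conclude $\|u-P\|_{L^\infty(Q_r^-)}\leq CC_*r^{\beta+2}$ (with $C$ absorbing a factor $\lambda^{-(\beta+2)}$ and depending on $n,\beta,\|u\|_{L^\infty(Q_1^-)}$).

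The main obstacle is closing the recursive estimate for $\sigma_k$: one must exploit the gap $N+1>\beta+2$ afforded by the non-integrality of $\beta$, since no such contraction is available otherwise. A secondary subtlety is passing from the $L^\infty$-bounds on $P_k-P$ to convergence of the polynomial coefficients at the critical top order $|\alpha|=N$; this requires the sharp exponent $\beta+2-N=\beta-\lfloor\beta\rfloor>0$, which is precisely what non-integrality of $\beta$ provides.
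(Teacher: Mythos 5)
Your proof is correct, and its skeleton is the same Caffarelli--Friedman-type iteration used in the paper: dyadic scales, a caloric polynomial $P_k$ of parabolic degree at most $\lfloor\beta\rfloor+2$ at each scale, a one-step improvement after rescaling, and the crucial use of non-integrality through $\lfloor\beta\rfloor+3>\beta+2$ to make the Taylor remainder beat the target exponent; the convergence of coefficients and the final dyadic sandwich are also as in the paper. Where you genuinely differ is the one-step approximation: the paper first normalizes $C_*\le\delta$ by an initial rescaling and then quotes Lemma 6.1 of Figalli--Shahgholian to produce a caloric $w$ with $|v-w|\le\epsilon$, whereas you solve the Dirichlet problem for the heat equation in $Q_{1/2}^-$ directly and control the correction $w=v-h$ by the explicit barrier $c\left(\tfrac14-|x|^2-t\right)$, which gives a bound \emph{linear} in $C_*$ with no smallness assumption, and lets you track everything through the linear recursion $\sigma_{k+1}\le C_N\lambda^{N-\beta-1}\sigma_k+C_1C_*$. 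This is more elementary and self-contained than the citation, and it keeps the dependence on $C_*$ explicit throughout. Two small points you should make explicit: (i) the truncated parabolic Taylor polynomial $q$ of the caloric function $h$ must itself be caloric for $P_{k+1}$ to be caloric; this is true because the parabolically homogeneous graded pieces of the Taylor expansion of a caloric function are caloric (the paper glosses this with ``obviously''), and note also that the derivative estimates for $h$ at $(0,0)$ are legitimate because the top of $Q_{1/2}^-$ is not part of the parabolic boundary; (ii) your normalization $\lVert u\rVert_{L^\infty(Q_1^-)}+C_*\le1$ unwinds to a bound $C(n,\beta)\bigl(\lVert u\rVert_{L^\infty(Q_1^-)}+C_*\bigr)r^{\beta+2}$ rather than literally $CC_*r^{\beta+2}$; this is the same harmless imprecision that is already present in the statement (and in the paper's own constant bookkeeping), since for $C_*\to0$ a bound proportional to $C_*$ cannot hold for a general bounded caloric $u$, so it is not a gap specific to your argument.
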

\begin{proof}
We can assume that $\lVert u\rVert_{L^\infty(Q_1^-)}\leq 1$ and $C_*\leq \delta$, where $\delta$ is small enough and will be determined later. (Replace $u$ by $u(R^{-1}x,R^{-2}t)$ for a large fixed constant $R$ to find $|H(u)|\leq \delta |X|^\beta$)
The proof of lemma is based on the following claim.\\

{\bf Claim:} There exists $0<\rho<1$ and a sequence of caloric polynomials $P_k$ such that
$$
\lVert u-P_k \rVert_{L^\infty(Q_{\rho^k}^-)}\leq \rho^{k(\beta+2)},
$$
and
$$
|\partial_x^\mu\partial_t^\ell(P_{k}-P_{k-1})(0,0)|\leq C_0\rho^{(k-1)(\beta+2-|\mu|-2\ell)}, \quad\text{if }|\mu|+2\ell<\beta+2.
$$
\\

A straight forward implication of this claim is that the sequence $\{P_k\}$ converges uniformly in $Q_1$ to a polynomial $P$ of degree at most $\lfloor\beta\rfloor+2$ which clearly satisfies
\begin{align*}
\lVert u-P \rVert_{L^\infty(Q_{\rho^k}^-)}\leq& \lVert u-P_k \rVert_{L^\infty(Q_{\rho^k}^-)}+\sum_{i=k+1}^\infty\lVert P_i-P_{i-1} \rVert_{L^\infty(Q_{\rho^k}^-)}\\
\leq&\rho^{k(\beta+2)}+\sum_{i=k+1}^\infty \sum_{|\mu|+2\ell<\beta+2}C_0\rho^{(i-1)(\beta+2-|\mu|-2\ell)}\rho^{k(|\mu|+2\ell)}\\
\leq&\rho^{k(\beta+2)}+\sum_{|\mu|+2\ell<\beta+2}C_0\rho^{k(\beta+2)}\leq C_{n,\beta} C_0\rho^{k(\beta+2)}.
\end{align*}
Therefore, the lemma will be proved for $C:=\frac1\delta C_{n,\beta} C_0\rho^{-(\beta+2)}$.
\\

Now we prove the claim. It is obviously true for $k=0$ (just take $P_0\equiv P_{-1}\equiv0$). We now assume that it holds for $k$ and we prove it for $k+1$. Define 
$$
v(X):=\frac{u(\rho^kx,\rho^{2k}t)-P_k(\rho^kx,\rho^{2k}t)}{\rho^{k(\beta+2)}}. 
$$
Then by inductive hypothesis $|v|\leq1$ in $Q_1^-$. In addition, 
$$
|H(v)|=\left|\frac{H(u)(\rho^kx,\rho^{2k}t)}{\rho^{k\beta}}\right|\leq C_*\leq\delta.
$$
If we apply Lemma 6.1 in \cite{figalli2015general}, there exist $\delta=\delta(\epsilon)$ and function $w$ satisfying 
$$
|v-w|\leq \epsilon,\quad\text{ in }Q_{1/2}^-,
$$
and
$$
\left\{\begin{array}{ll}H(w)=0&\text{in }Q_{1/2}^-,\\[8pt]
w=v&\text{on }\partial_pQ_{1/2}^-.\end{array}\right.
$$
Now consider a polynomial $\hat P$  of degree at most $\lfloor\beta\rfloor+2$ such that
$\partial_x^\mu\partial_t^\ell\hat P(0,0)=\partial_x^\mu\partial_t^\ell w(0,0)$ for $|\mu|+\ell<\beta+2$.
Since $\lVert w\rVert_{L^\infty(Q_{1/2}^-)}\leq\lVert v\rVert_{L^\infty(Q_{1}^-)}\leq1$, by estimates on derivatives for caloric functions $|\partial_x^\mu\partial_t^\ell\hat P(0,0)|\leq C_0$ for a universal constant $C_0$.
Obviously,  $\hat P$ is caloric and 
$$
\lVert w-\hat P\rVert_{L^\infty(Q_{\rho}^-)}\leq C_0\rho^{\lfloor\beta\rfloor+3}.
$$ 
In particular, if we choose $\rho$ sufficiently small so that $C_0\rho^{\lfloor\beta\rfloor+3}\leq \frac12\rho^{\beta+2}$ and then choose $\epsilon$ such that $\epsilon\leq\frac12\rho^{\beta+2}$, we arrive at
$$
\lVert v-\hat P\rVert_{L^\infty(Q_{\rho}^-)}\leq \rho^{\beta+2},
$$
or equivalently 
$$
\lVert u-P_{k+1} \rVert_{L^\infty(Q_{\rho^{k+1}}^-)}\leq \rho^{(k+1)(\beta+2)},\quad
P_{k+1}(X):=P_k(X)+\rho^{k(\beta+2)}\hat P(\rho^{-k}x,\rho^{-2k}t).
$$
We also have 
$$
|\partial_x^\mu\partial_t^\ell(P_{k+1}-P_{k})(0,0)|\leq \rho^{k(\beta+2-|\mu|-\ell)}|\partial_x^\mu\partial_t^\ell\hat P(0,0)|\leq C_0\rho^{k(\beta+2-|\mu|-\ell)}.
$$
\end{proof}

\paragraph{\bf{Acknowledgements.} }
H. Shahgholian was partially supported by Swedish Research Council.
G. Aleksanyan thanks KTH for visiting appointment. 

\addcontentsline{toc}{section}{\numberline{}References}

\end{document}